\documentclass[preprint,12pt]{elsarticle}
\usepackage{latexsym}
\usepackage{amssymb}
\usepackage{amsmath}
\usepackage{amsthm}
\usepackage{verbatim}
\usepackage{appendix}
\usepackage{epsfig}
\usepackage[text={145mm,220mm},centering]{geometry}
\usepackage{color}
\usepackage{hyperref}
\numberwithin{equation}{section}

\newtheorem{thm}{Theorem}[section]
\newtheorem{lem}{Lemma}[section]

\newtheorem{rem}{Remark}[section]
\newtheorem{defi}{Definition}[section]
\newtheorem{corollary}{Corollary}[section]

\newcommand{\RR}{\mathbb{R}}

\newcommand{\NN}{\mathcal{N}}

\journal{}

\begin{document}
\UseRawInputEncoding
\allowdisplaybreaks[4]
\begin{frontmatter}

\title{Blow-up and decay for  a class of variable coefficient wave equation with nonlinear damping and logarithmic source
}

\author[ad1]{Pengxue Cui}
\ead{cuipx0205@vip.163.com}
\author[ad1]{Shuguan Ji\corref{cor}}
\ead{jisg100@nenu.edu.cn}
\address[ad1]{School of Mathematics and Statistics and Center for Mathematics and Interdisciplinary Sciences, Northeast Normal University, Changchun 130024, P.R. China}
\cortext[cor]{Corresponding author.}

\begin{abstract}
In this paper, we consider the long time behavior for the solution of a class of variable coefficient wave equation with nonlinear damping and logarithmic source.
The existence and uniqueness of local weak solution can be obtained by using the Galerkin method and contraction mapping principle.
However, the long time behavior of the solution is usually complicated and it depends on the balance mechanism between the damping and source terms. When the damping exponent $(p+1)$ (see assumption (H3)) is greater than the source term exponent $(q-1)$ (see equation (\ref{1.1})), namely, $p+2>q$, we obtain the global existence and accurate decay rates of the energy for the weak solutions with any initial data.
Moreover, whether the weak solution exists globally or blows up in finite time, it is closely related to the initial data. In the framework of modified potential well theory, we construct the stable and unstable sets (see \eqref{e2.12}) for the initial data. For the initial data belonging to the stable set, we prove that the weak solution exists globally and has similar decay rates as the previous results. For $p+2<q$ and the initial data belonging to the unstable set, we prove that the weak solution blows up in finite time for a little special damping $g(u_{t})=|u_{t}|^{p}u_{t}$.
\end{abstract}

\begin{keyword}
Variable coefficient, logarithmic nonlinearity, blow-up, energy decay.
\MSC[2010] 35L71; 35A01; 35B44.
\end{keyword}

\end{frontmatter}



\section{Introduction}

In this paper, we consider the following model for  a class of variable coefficient wave equation with nonlinear damping and logarithmic source
\begin{equation}\label{1.1}
\left\{\begin{array}{llll}
&u_{tt}-\mu(t) Lu+g(u_{t})=|u|^{q-2}u\log|u|,&x \in \Omega,&t>0,\\
&u(x,t)=0,  &x \in \partial\Omega,&t>0,\\
&u(x,0)=u_{0}(x), u_{t}(x,0)=u_{1}(x),  &x \in \Omega,&
\end{array}\right.
\end{equation}
where $\Omega\subset \RR^{n}$ $(n\geq1)$ is a bounded domain with smooth boundary $\partial\Omega$, $\mu(t)$ is a given function and $u_{0}\in H_{0}^{1}(\Omega)$, $u_{1}\in L^{2}(\Omega)$ are given initial data. The second order differential operator $L$ is given by
\begin{equation*}
  Lu =div(A(x)\nabla u)=\sum\limits_{i,j=1}^{\infty}\frac{\partial}{\partial x_{i}}(a_{ij}(x)\frac{\partial u}{\partial x_{j}}).
\end{equation*}
$q$ is a constant satisfying
\begin{equation}\label{1.2}
  2<q<2^{*}=
  \begin{cases}
  \frac{2n}{n-2}&\text{if $n\geq 3$},\\
  +\infty &\text{if $n=1,2$}.
  \end{cases}
\end{equation}

The wave equations with damping and source term are widely used in  fluid dynamics, classical mechanics and quantum field theory \cite{beijing1,beijing2}. Due to the complexity of the balance mechanism between damping and source term, it is not easy to study the long time behavior of the solutions. It is worth to mention that when the equation doesn't contain damping, the source term can make the solutions blow up in finite or infinite time (see Glassey \cite{nodamp1}, Levine \cite{nodamp2,Levine1} and references therein). When the equation doesn't contain source term, the damping can make the solutions exist globally (see Agre \cite{nosource1}, \'Ang \cite{nosource2} and others). Therefore, from a mathematical point of view, the research of the interaction between them becomes more difficult and attractive.

Among the extensive study of wave equations with polynomial source, let us go back to the work of Georgiev and Todorova \cite{JDE1994}. They considered the following wave equation
\begin{equation}\label{1.3}
  u_{tt}-\Delta u+ |u_{t}|^{p}u_{t}=|u|^{q}u, \ \ p,q>0,
\end{equation}
with the same initial and boundary values as in \eqref{1.1}. They obtained that if $q\leq p$ the solutions exist globally; if $q>p$, the solutions blow up in finite time. For more information about the variation of wave equations with polynomial source, see \cite{AR2006,GS2006,Ha15}  and references therein.

In the case of logarithmic source $|u|^{p-2}u\log|u|^{2}(p\geq 2)$, such wave equations are derived from the fields of quantum mechanics, nuclear physics, inflation cosmology,  super-symmetric field theory \cite{JP,KE,La}. For instance, in optical theory \cite{Bu03,Kr00}, when a partially coherent beam propagates in a nonlinear medium with logarithmic nonlinearity, the logarithmic source represents the refractive change index of light.
In quantum mechanics \cite{log1,log2}, since forces which cause dispersion are exactly balanced for our soliton by the negative pressure due to the logarithmic nonlinearity, the source can be introduced to describe the nonrelativistic wave equation of spin particles moving in an external electromagnetic field (Pauli equation) and the relativistic wave equation of spinless particles (Klein-Gordon equation). The Klein-Gordon equation with logarithmic source was introduced by Birula and Mycielski \cite{log1,log2},
\begin{equation}\label{1.4}
 u_{tt}- u_{xx}+u-\varepsilon u\log |u|^{2}=0,
\end{equation}
with the initial and boundary values as in \eqref{1.1}. Cazenave and Haraux \cite{CH} studied the existence and uniqueness of the local solution of (\ref{1.4}) in $\mathbb{R}^{3}$ by using Galerkin method. Subsequently, G\'orka \cite{dalog5} established the global existence of weak solutions of (\ref{1.4}) in $\mathbb{R}$. Recently, Hu et al. \cite{hu2019} studied the following equation
\begin{equation*}
  u_{tt}-\Delta u+u+u_{t}=u \log|u|^{q}, \ \ 0<q<1,
\end{equation*}
with the initial and boundary values as in \eqref{1.1}. By using potential well method, they estimated the decay rate of the energy and proved that the solutions blow up at $+\infty$ in the $H_{0}^{1}(\Omega)$ norm.  Chen and Tian \cite{dalog7} considered a semi-linear pseudo-parabolic equation with logarithmic source $u \log|u|$. By comparing with \cite{XUJFA}, they found that polynomial source $|u|^{p-1}u$ plays a significant role in the result of blow-up in finite time for the solutions of such semi-linear pseudo-parabolic equation. For more information of the equations with logarithmic source, see \cite{Al19,Al20,Ch20,2018ma,dalog6} and the references therein. Therefore, it is meaningful to consider logarithmic source $|u|^{q-2}u \log|u|$

For the case of variable coefficient, such wave equations arise from the propagation of seismic waves in non-isotropic media, the flow of fluid in porous media \cite{Barbu1,Barbu2}, and have been studied in many literatures (see \cite{HADCDS,HATAI2017,HAO2019} and the references therein). However, few literatures analysed the long time behavior for the weak solution of  the variable coefficient wave equations when it is related to the interaction between damping and source term.
Recently, Ha \cite{HAMO2021} studied the wave equation
\begin{equation}\label{e1.3}
u_{tt}-\mu(t) Lu+g(u_{t})=|u|^{q}u,
\end{equation}
with the same initial and boundary values as in \eqref{1.1}.  They found that the long time behavior of the solutions is not only affected by the balance mechanism between damping and source term, but also closely related to the initial data. When the source term is supercritical, the author established the sufficient conditions for the existence, decay and blow-up in finite time of the weak solutions, respectively. Hao and Du \cite{HAO2022} studied a variable-coefficient viscoelastic wave equation with logarithmic nonlinearity $u\log|u|$, linear damping $u_{t}$ and nonlinear boundary condition. They obtained the general decay rate of energy by using Riemannian geometry method and blow-up at $+\infty$ by concavity method. But, to my knowledge, there is none literature for the variable coefficient wave equations with nonlinear damping and logarithmic source $|u|^{q-2}u \log|u|$.

Motivated by the above studies, in this paper, we mainly analyse the relationship between global existence or blow-up in finite time of weak solutions and the balance mechanism between nonlinear damping and logarithmic source. It is worth pointing out that although the logarithmic source is weaker than the polynomial source in some way, the theory of establishing the results of existence and energy decay in the case of polynomial source cannot be directly used.
The outline of the paper is as follows. In Section 2, we shall give some notations, lemmas and main results. In Section 3, the existence and uniqueness of local weak solutions is obtained by utilizing the standard Galerkin method and contraction mapping principle. In Section 4, the global existence of the weak solutions and the accurate decay rates of the energy are established, which are shown in Theorems \ref{theorem3.2} and \ref{theorem4.2}. In Section 5, that the weak solutions blow up in finite time with nonpositive initial energy and positive initial energy is proved, which is shown in Theorems \ref{theorem5.1} and \ref{theorem5.2}, respectively.

\section{Some notations and main results}

In this section, we give some assumptions, lemmas and main results. Throughout this paper, the norm of $f \in L^{s}(\Omega)$ is defined by $\| f \|_{s}$ $=(\int_{\Omega} |f|^{s} \textrm{d}x)^{1/s}$  for $1\leq s <\infty$. $H^{1}(\Omega), H_{0}^{1}(\Omega)$ denote the usual Sobolev spaces with the norm
\begin{equation*}
  \|u\|_{H^{1}(\Omega)}=\left( \|u\|_{2}^{2}+\|\nabla u\|_{2}^{2}\right)^{1/2},
\end{equation*}
and $H^{-1}(\Omega)$ indicates the dual space of $H_{0}^{1}(\Omega)$.

Let us begin with the assumptions of $A(x), \mu(t)$ and $g(s)$.

({\bf{H1}}) $A(x)=(a_{ij}(x)), x\in \Omega$, denotes symmetric matrices with $a_{ij}(x)\in C^{1}(\bar{\Omega})$, and there exists a positive constant $a_{0}$ such that
\begin{equation}\label{2.1}
  \sum_{i,j=1}^{n}a_{ij}(x)w_{i}w_{j}\geq a_{0}|w|^{2}, \ \ x\in \bar{\Omega}, \ \ 0\neq w=(w_{1},\ldots,w_{n}) \in \RR^{n}.
\end{equation}

({\bf{H2}}) Let $\mu(t)\in C^{1}(0, \infty)\cap W^{1,\infty}(0,\infty)$ satisfy
\begin{equation*}
  \mu(t)\geq \mu_{0}>0 \mbox{ and } \mu'(t)<0,~ \mbox{ a.e. } t\in[0,\infty),
\end{equation*}
where $\mu_{0}$ is a positive constant.

({\bf{H3}}) Let $g(s):\RR \rightarrow \RR$ be a nondecreasing $C^{1}$ function with $g(0)=0$ and assume that
\begin{eqnarray*}
  &|\beta(s)|\leq |g(s)|\leq |\beta^{-1}(s)|, &\mbox{ if } |s|\leq 1,\nonumber\\
  &c_{1}|s|^{p+1}\leq |g(s)| \leq c_{2}|s|^{p+1}, &\mbox{ if }|s|>1.
\end{eqnarray*}
where $c_{1}, c_{2}, p$ are positive constants, $\beta\in C^{1}[-1,1]$ is a strictly increasing function, $\beta^{-1}$ is the inverse function of $\beta$.

\begin{rem}\label{remark2.3}
According to $({\bf{H1}})$, it is easy to check that the function $a(\cdot,\cdot):H_{0}^{1}(\Omega)\times H_{0}^{1}(\Omega)\rightarrow \RR$, defined by
\begin{equation*}
  a(u(t),v(t))=\sum\limits_{i,j=1}^{n}a_{ij}(x)\frac{\partial u(t)}{\partial x_{i}}\frac{\partial v(t)}{\partial x_{j}}\textrm{d}x=\int_{\Omega}A\nabla u \cdot \nabla v\textrm{d}x,
\end{equation*}
is symmetric and continuous. Moreover, by replacing $w$ with $\nabla u$ in \eqref{2.1}, we have
\begin{equation}\label{2.5}
  a(u(t),u(t))\geq a_{0}\int_{\Omega}\sum_{i=1}^{n}\left| \frac{\partial u}{\partial x_{i}} \right|^{2}\textrm{d}x=a_{0}\|\nabla u\|_{2}^{2}.
\end{equation}
\end{rem}

\begin{rem}\label{remark2.2}
The assumption $({\bf{H3}})$ implies that $sg(s)>0$ for all $s\neq 0$.
\end{rem}

Next, we define the following functionals associated with problem \eqref{1.1} on $H_{0}^{1}(\Omega)$:
the Nehari functional
\begin{equation*}
  I(u)=\mu(t)a(u(t),u(t))-\int_{\Omega}|u|^{q}\log(|u|)\textrm{d}x,
\end{equation*}
the potential energy functional
\begin{eqnarray}\label{2.8}
  J(u)&=&\frac{1}{2}\mu(t)a(u(t),u(t))-\frac{1}{q}\int_{\Omega}|u|^{q}\log |u| \textrm{d}x+\frac{1}{q^{2}}\|u\|_{q}^{q}\nonumber\\
  &=&\frac{1}{q}I(u)+\frac{1}{q^{2}}\|u\|_{q}^{q}+\frac{q-2}{2q}\mu(t)a(u(t),u(t)),
\end{eqnarray}
and the total energy function
\begin{equation}\label{2.6}
  E(t):=E(u,u_{t})=\frac{1}{2}\|u_{t}\|_{2}^{2}+\frac{1}{2}\mu(t)a(u(t),u(t))
  -\frac{1}{q}\int_{\Omega}|u|^{q}\log |u| \textrm{d}x+\frac{1}{q^{2}}\|u\|_{q}^{q}.
\end{equation}
The mountain pass value of $J(u)$ is defined as
\begin{equation}\label{2.10}
  d:=\inf_{u}\{\sup_{\kappa>0}J(\kappa u): u\in H_{0}^{1}(\Omega)\backslash\{0\}\},
\end{equation}
and we also define the Nehari manifold
\begin{equation*}\label{2.11}
  \NN=\{u\in H_{0}^{1}(\Omega)\backslash\{0\}: I(u)=0\}.
\end{equation*}
So, the potential well depth $d$ that is the mountain pass value of $J(u)$ defined in \eqref{2.10} can be rewritten as
\begin{equation}\label{2.12}
  d=\inf_{u\in \NN} J(u).
\end{equation}
By Lemma \ref{lemma2.6}, we know that there exist a positive constant $M$ such that
\begin{equation}\label{eq2.12}
  d\geq M:=\left( \frac{q-2}{2q} \right)\mu_{0}r_{*}^{2},
\end{equation}
where $r_{*}$ is defined in Lemma \ref{lemma2.5}.

For convenience, we define the stable set $W$ and the unstable set $V$:
\begin{eqnarray}\label{e2.12}
W&=&\{(u,u_{t})\in H_{0}^{1}(\Omega)\backslash\{0\}\times L^{2}(\Omega): a(u,u)<r_{*}^{2}~ \text{and}~ 0<E(u,u_{t})<M\},\nonumber\\
V&=&\{(u,u_{t})\in H_{0}^{1}(\Omega)\backslash\{0\}\times L^{2}(\Omega): a(u,u)> r_{*}^{2}~ \text{and}~ 0<E(u,u_{t})<M \}\nonumber\\
&&\bigcup \{(u,u_{t})\in H_{0}^{1}(\Omega)\backslash\{0\}\times L^{2}(\Omega): E(u,u_{t})\leq 0 \}.
\end{eqnarray}


To state our main results, let us give the definition of weak solution to \eqref{1.1}.
\begin{defi}\label{definition2.1}
A function $u$ is called a weak solution of \eqref{1.1} on $\Omega\times [0,T]$, if $u\in C([0,T]; H_{0}^{1}(\Omega))\cap C^{1}([0,T]; L^{2}(\Omega))$, $u_{t}\in L^{p+2}([0,T],\Omega)\cap L^{2}([0,T]; H_{0}^{1}(\Omega))$, $(u(0,x), u_{t}(0,x))=(u_{0}, u_{1})\in H_{0}^{1}(\Omega) \times L^{2}(\Omega)$ and $u$ verifies the identify
\begin{eqnarray*}
 &&\int_{\Omega}u_{t} \phi \textrm{d}x -\int_{\Omega}u_{1}\phi \textrm{d}x +\int_{0}^{t}\mu(t)a(u,\phi)\textrm{d}t +\int_{0}^{t}\int_{\Omega}g(u_{t})\phi \textrm{d}x\textrm{d}t\\
 &=&\int_{0}^{t}\int_{\Omega}|u|^{q-2}u\log |u|\phi \textrm{d}x\textrm{d}t,
\end{eqnarray*}
for any $\phi\in H_{0}^{1}(\Omega)$ and for almost everywhere $t \in [0,T]$.
\end{defi}

Now, we establish the local existence of a weak solution to \eqref{1.1}.
\begin{thm} \label{theorem3.1}
Suppose that the assumptions $({\bf{H1}})-({\bf{H3}})$ hold. If the initial data $(u_{0},u_{1}) \in H_{0}^{1}(\Omega)\times L^{2}(\Omega)$ and $c_{1}>\frac{1}{p+2}$, then there exists $T>0$ and a unique local weak solution of \eqref{1.1} over $[0,T]$.
\end{thm}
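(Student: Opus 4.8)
The plan is to prove local well-posedness by the standard two-step route: first solve a linearized problem for each fixed source term via the Galerkin method, then set up a contraction mapping on a suitable complete metric space to handle the nonlinear logarithmic source. For the linearized problem, given $v$ in the solution space, I would consider
\[
u_{tt}-\mu(t)Lu+g(u_t)=|v|^{q-2}v\log|v|,
\]
with the same initial and boundary data. First I would check that the forcing $f(v):=|v|^{q-2}v\log|v|$ lies in a good space: using the elementary bound $|\log s|\le C_\varepsilon(s^{-\varepsilon}+s^{\varepsilon})$ together with \eqref{1.2} and the Sobolev embedding $H_0^1(\Omega)\hookrightarrow L^{2^*}(\Omega)$, one gets $f(v)\in L^2(0,T;L^2(\Omega))$ with a quantitative bound in terms of $\|v\|_{C([0,T];H_0^1)}$. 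Then I would run the Galerkin scheme: pick a basis $\{w_j\}$ of $H_0^1(\Omega)$ (e.g.\ eigenfunctions of $L$), write $u^m=\sum_{j=1}^m c_j^m(t)w_j$, obtain a system of ODEs whose local solvability follows from the Cauchy--Peano/Picard theory (here the hypothesis $c_1>\frac1{p+2}$, used via \textbf{(H3)} and Remark \ref{remark2.2}, guarantees coercivity of the damping), and derive the a priori energy estimate by multiplying by $\dot c_j^m$ and summing: using \eqref{2.5}, the monotonicity $sg(s)>0$, \textbf{(H2)} ($\mu'(t)<0$), and Gronwall, one controls $\|u^m_t\|_2^2+\|\nabla u^m\|_2^2$ plus, crucially, the dissipation term $\int_0^T\int_\Omega g(u^m_t)u^m_t\,dx\,dt$, which by \textbf{(H3)} dominates $\|u^m_t\|_{L^{p+2}(Q_T)}^{p+2}$ and hence gives the bound needed for $u_t\in L^{p+2}$ in Definition \ref{definition2.1}. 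Passing to the limit requires the Aubin--Lions lemma for strong $L^2$ convergence of $u^m$ and $u^m_t$ on a subsequence, plus the standard monotonicity argument (Minty's trick) to identify the weak limit of $g(u^m_t)$ with $g(u_t)$.

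Next I would define the solution map $\mathcal{S}:v\mapsto u$ on the set
\[
\mathcal{B}_{R,T}=\bigl\{v\in C([0,T];H_0^1(\Omega))\cap C^1([0,T];L^2(\Omega)): v(0)=u_0,\ v_t(0)=u_1,\ \|v\|_{X_T}\le R\bigr\},
\]
with $\|v\|_{X_T}^2=\sup_{[0,T]}(\|v_t\|_2^2+\|\nabla v\|_2^2)$ and $R$ chosen (from the a priori estimate above) large enough that $\mathcal{S}$ maps $\mathcal{B}_{R,T}$ into itself for $T$ small. To get the contraction, for $v_1,v_2\in\mathcal{B}_{R,T}$ set $w=\mathcal{S}v_1-\mathcal{S}v_2$; it solves a wave equation with zero data, forcing $f(v_1)-f(v_2)$, and the extra damping term $g((\mathcal{S}v_1)_t)-g((\mathcal{S}v_2)_t)$ which by monotonicity of $g$ has a favorable sign and can be dropped in the energy estimate. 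So I would need the local Lipschitz bound
\[
\|f(v_1)-f(v_2)\|_{L^1(0,T;L^2(\Omega))}\le C(R)\,T^{\theta}\,\|v_1-v_2\|_{X_T}
\]
for some $\theta>0$; this follows from the mean value theorem applied to $s\mapsto|s|^{q-2}s\log|s|$, whose derivative grows like $|s|^{q-2}(1+|\log|s||)$, combined again with Sobolev embedding and an $\varepsilon$-logarithm trick to absorb the singularity at $s=0$. Then a Gronwall argument on the energy of $w$ gives $\|w\|_{X_T}\le C(R)T^\theta\|v_1-v_2\|_{X_T}$, so for $T$ small enough $\mathcal{S}$ is a strict contraction; Banach's fixed point theorem yields existence and uniqueness of the weak solution on $[0,T]$. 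The regularity $u_t\in L^{p+2}(Q_T)\cap L^2(0,T;H_0^1(\Omega))$ is inherited from the Galerkin bounds (the $H_0^1$-in-time bound on $u_t$ coming from testing the linear problem with $-Lu_t$, or alternatively absorbing it into the definition of the iteration space).

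The main obstacle I anticipate is the handling of the logarithmic nonlinearity near $u=0$ and near $|u|=\infty$ simultaneously: unlike the pure power source of \eqref{1.3}, $|u|^{q-2}u\log|u|$ is not globally Lipschitz-modulus-continuous of a clean polynomial type, so the Lipschitz-type estimate on $f$ must be done carefully with the logarithm-absorption inequality $|\log s|\le C_\delta(s^\delta+s^{-\delta})$ and a split of $\Omega$ according to $|v_i|\lessgtr 1$, choosing $\delta$ small enough that the resulting exponents still sit below $2^*$ (this is exactly where condition \eqref{1.2}, $q<2^*$, is needed, and it forces $\delta$-dependent constants). A secondary technical point is that the damping term $g(u_t)$ is only assumed monotone with two-sided power bounds (not, say, globally Lipschitz), so in the Galerkin limit and in the contraction one must avoid ever differencing $g$ directly and instead rely on monotonicity (Minty--Browder), and the condition $c_1>\frac1{p+2}$ must be tracked to keep the energy identity closed.
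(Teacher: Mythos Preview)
Your outline follows the same route as the paper: solve the frozen-source problem
\[
v_{tt}-\mu(t)Lv+g(v_t)=|u|^{q-2}u\log|u|
\]
by Galerkin approximation (this is the paper's Lemma~\ref{lemma3.1}, with basis the eigenfunctions of $-\Delta$ rather than $L$, which is immaterial), then run a contraction on a ball in $\mathcal{M}=C([0,T];H_0^1)\cap C^1([0,T];L^2)$, using the mean-value theorem plus a log-absorption inequality for the Lipschitz estimate on the source (the paper's \eqref{3.22}--\eqref{03.24}), and the monotonicity of $g$ both for the Minty-type identification of the weak limit and to drop the damping difference in the contraction step.

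One technical point needs correcting. You place the forcing $f(v)=|v|^{q-2}v\log|v|$ in $L^2(0,T;L^2(\Omega))$, but this can fail when $q$ is close to $2^*$: it would require roughly $2(q-1+\varepsilon)\le 2^*$, i.e.\ $q\lesssim 1+2^*/2$, which is strictly smaller than the allowed range \eqref{1.2}. The paper instead estimates $f(u)$ in $L^{(p+2)/(p+1)}(\Omega)$ and pairs it against $v_k'\in L^{p+2}$ via Young's inequality (see \eqref{3.8}); the residual term $\tfrac{2}{p+2}\int_0^t\|v_k'\|_{p+2}^{p+2}\,ds$ is then absorbed by the damping contribution $2c_1\int_0^t\|v_k'\|_{p+2}^{p+2}\,ds$ coming from \eqref{3.6}. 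That absorption is precisely---and only---where the hypothesis $c_1>\tfrac{1}{p+2}$ is used, not in the local ODE solvability nor in any abstract ``coercivity'' of $g$. With this adjustment your sketch matches the paper's argument.
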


The next theorems state that the weak solution provided by theorem \ref{theorem3.1} can be extended to the whole interval $[0,+\infty)$ and give the accurate energy decay rates when the damping has a general growth near zero.

\begin{thm} \label{theorem3.2}
In addition to the conditions of theorem \ref{theorem3.1}, we assume that $(u_{0},u_{1})\in W,$ or $ q < p + 2$,
then the unique weak solution $u$ furnished by theorem \ref{theorem3.1} is a global weak solution and the existence time $T$ can be taken arbitrarily large.
\end{thm}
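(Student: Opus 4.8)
The plan is to show the weak solution cannot leave its lifespan by establishing an a priori bound on the natural energy norm $\|u_t\|_2^2 + \mu(t)a(u,u)$, which by the local existence theorem (Theorem \ref{theorem3.1}) forces the continuation of the solution to arbitrarily large times. First I would derive the energy identity: multiplying \eqref{1.1} by $u_t$, integrating over $\Omega$, and using (H2) ($\mu'(t)<0$) together with Remark \ref{remark2.2} ($s g(s)\ge 0$), one obtains
\begin{equation*}
E'(t) = \frac{1}{2}\mu'(t)a(u,u) - \int_\Omega g(u_t)u_t\,\mathrm{d}x \le 0,
\end{equation*}
so $E(t)\le E(0)$ for all $t$ in the lifespan. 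The task is then to convert this into control of $\|u_t\|_2^2$ and $\mu(t)a(u,u)$ separately, which is where the two hypotheses $(u_0,u_1)\in W$ and $q<p+2$ enter in two different ways.

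For the case $(u_0,u_1)\in W$: I would run the standard modified-potential-well argument. Using \eqref{eq2.12} and the definition of $W$ in \eqref{e2.12}, one shows by a continuity/contradiction argument that $a(u(t),u(t)) < r_*^2$ is preserved along the flow — if it were violated, there would be a first time $t_0$ with $a(u(t_0),u(t_0))=r_*^2$, and then Lemma \ref{lemma2.5}/\ref{lemma2.6} would give $J(u(t_0))\ge M$, hence $E(t_0)\ge J(u(t_0))\ge M$, contradicting $E(t_0)\le E(0)<M$. Once $a(u,u)<r_*^2$ is known on the whole lifespan, the decomposition \eqref{2.8} of $J$ as $\frac1q I(u)+\frac1{q^2}\|u\|_q^q + \frac{q-2}{2q}\mu(t)a(u,u)$ together with $I(u)\ge 0$ (again a consequence of staying in the well, via Lemma \ref{lemma2.5}) shows $\mu(t)a(u,u)\le \frac{2q}{q-2}J(u)\le \frac{2q}{q-2}E(t)\le \frac{2q}{q-2}E(0)$, and then $\|u_t\|_2^2\le 2E(t)\le 2E(0)$; both quantities are bounded uniformly in $t$.

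For the case $q<p+2$ (i.e.\ $p+2>q$) with \emph{arbitrary} initial data, the potential well is not available and the bound must come from absorbing the logarithmic source directly. The key estimate is a logarithmic Sobolev- or Young-type inequality: for any $\varepsilon>0$ there is $C_\varepsilon$ with
\begin{equation*}
\int_\Omega |u|^q \log|u|\,\mathrm{d}x \le \varepsilon\,\|\nabla u\|_2^{q} + C_\varepsilon,
\end{equation*}
valid because $q<2^*$ and the logarithm grows slower than any power, so $|u|^q\log|u|\lesssim |u|^{q+\delta}+C$ with $q+\delta<2^*$ for small $\delta$, and then Gagliardo–Nirenberg plus Young. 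This lets me write, from $E(t)\le E(0)$,
\begin{equation*}
\tfrac12\|u_t\|_2^2 + \tfrac12\mu_0 a_0\|\nabla u\|_2^2 \le E(0) + \tfrac1q\big(\varepsilon\|\nabla u\|_2^q + C_\varepsilon\big) + \tfrac1{q^2}\|u\|_q^q.
\end{equation*}
Here I expect the \textbf{main obstacle}: the term $\varepsilon\|\nabla u\|_2^q$ has exponent $q>2$ and cannot simply be absorbed into $\tfrac12\mu_0 a_0\|\nabla u\|_2^2$ by choosing $\varepsilon$ small. One must instead set up a differential inequality for the auxiliary function $H(t) = E(t) + \text{(lower-order correction)}$, or — exploiting precisely $p+2>q$ — use the damping dissipation $\int_0^t\!\int_\Omega g(u_t)u_t$, which controls $\int_0^t\|u_t\|_{p+2}^{p+2}$, to dominate the source growth on bounded time intervals via an integrated Gronwall argument. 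The point of $p+2>q$ is that the damping is strong enough (superlinear of a higher order than the source) that on any finite interval the energy stays finite; combining the energy identity with the logarithmic bound and Gronwall's inequality yields $\|u_t\|_2^2+\mu(t)a(u,u)\le \Phi(T)$ for a finite function $\Phi$ on $[0,T]$, for every $T$. In either case, once the energy norm is bounded on $[0,T']$, the local existence theorem applied at $t=T'$ extends the solution past $T'$, and a standard maximality argument gives global existence; I would finish by noting uniqueness is inherited from Theorem \ref{theorem3.1}.
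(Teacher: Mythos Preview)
Your treatment of the case $(u_0,u_1)\in W$ is essentially the paper's argument (Lemma~\ref{lemma2.3} for invariance of the well, Corollary~\ref{corollary2.1} for $I(u)>0$, then the decomposition \eqref{2.8} of $J$ to extract uniform bounds), so that half is fine.

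The case $q<p+2$ is where your proposal has a genuine gap. Your first attempt, bounding $\int_\Omega |u|^q\log|u|\,\mathrm{d}x\le \varepsilon\|\nabla u\|_2^{q}+C_\varepsilon$ and inserting it into $E(t)\le E(0)$, cannot close (as you note), and this route does not use $p$ at all---there is nothing in the algebraic inequality that distinguishes $q<p+2$ from $q\ge p+2$. Your fallback (``auxiliary function plus Gronwall, using that the dissipation controls $\int_0^t\|u_t\|_{p+2}^{p+2}$'') is the right \emph{shape}, but you have not identified where $q<p+2$ actually enters, and without that the argument does not go through.

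The paper's mechanism is the following. Work with the positive-part functional
\[
e(t)=\tfrac12\|u_t\|_2^2+\tfrac12\mu(t)a(u,u)+\tfrac{1}{q^2}\|u\|_q^q,
\]
so that $e'(t)+\int_\Omega g(u_t)u_t$ contains the dangerous term $\int_\Omega |u|^{q-2}u\log|u|\,u_t$. Split this by Young's inequality with exponents $p+2$ and $\tfrac{p+2}{p+1}$: the $\|u_t\|_{p+2}^{p+2}$ piece is absorbed by the damping (this is where $c_1>\tfrac{1}{p+2}$ is used), and the remaining piece is $\int_\Omega \big||u|^{q-1}\log|u|\big|^{(p+2)/(p+1)}$. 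On $\{|u|\ge 1\}$ use $\log|u|\le (e\nu)^{-1}|u|^{\nu}$; the crucial step is to choose
\[
\nu=\frac{q(p+1)}{p+2}-(q-1),
\]
which is \emph{positive precisely when} $q<p+2$, and makes $(q-1+\nu)\tfrac{p+2}{p+1}=q$. Hence the source contribution reduces to $C\|u\|_q^q$, which is already a summand of $e(t)$. Gronwall on $e(t)$ then gives $e(t)\le Ce^{Ct}$ and the continuation argument finishes. This is the concrete content missing from your sketch: without this exponent-matching you cannot convert the source term into something controlled by the auxiliary energy, and your stated inequality $\int|u|^q\log|u|\le \varepsilon\|\nabla u\|_2^q+C_\varepsilon$ plays no role in the actual proof.
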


\begin{thm}\label{theorem4.2}
Suppose that the conditions of theorem \ref{theorem3.2} hold and $$\beta(s)=|s|^{p+1}~(p\geq 0).$$
Then, we have the following energy decay rates:

\textbf{Case 1:} If $p=0$, there exists a positive constant  $k_{1}$ such that
\begin{eqnarray}\label{4.1}
  E(t)\leq E(0)e^{-k_{1}[t-1]^{+}}.
\end{eqnarray}

\textbf{Case 2:} If $p>0$, there exists positive constants $k_{2}$ such that
\begin{eqnarray}\label{2.2}
  &E(t)\leq \Big(E(0)^{-\frac{p}{2}}
  +k_{2}^{-\frac{p}{2}}\frac{p}{2}[t-1]^{+}\Big)^{-\frac{2}{p+2}}.
\end{eqnarray}
\end{thm}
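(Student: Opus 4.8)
The plan is to establish a differential inequality for the energy $E(t)$ and then integrate it. First I would derive the basic energy identity: differentiating \eqref{2.6} along the weak solution and using equation \eqref{1.1}, one obtains
\begin{equation*}
  E'(t) = \frac{1}{2}\mu'(t)a(u,u) - \int_{\Omega} g(u_t)\,u_t\,\mathrm{d}x \leq -\int_{\Omega} g(u_t)\,u_t\,\mathrm{d}x \leq 0,
\end{equation*}
where we used $\mu'(t)<0$ from (H2) and $sg(s)>0$ from Remark \ref{remark2.2}. So $E$ is nonincreasing; the issue is to quantify the decay. The standard device (Komornik / Martinez / Nakao type argument) is to introduce a perturbed functional $\mathcal{F}(t) = E(t) + \varepsilon \Phi(t)$ with a multiplier term, typically $\Phi(t) = \int_\Omega u\,u_t\,\mathrm{d}x$ (or a weighted variant), show $\mathcal{F}$ is equivalent to $E$ for $\varepsilon$ small, and estimate $\mathcal{F}'(t)$.

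The key steps, in order: (1) compute $\Phi'(t) = \|u_t\|_2^2 - \mu(t)a(u,u) + \int_\Omega |u|^q\log|u|\,\mathrm{d}x - \int_\Omega g(u_t)\,u\,\mathrm{d}x$; rewrite $-\mu(t)a(u,u)+\int_\Omega|u|^q\log|u| = -I(u)$, and using the coercivity from the stable-set hypothesis $(u_0,u_1)\in W$ (or the sign control available when $q<p+2$) control $-I(u)$ from above by a negative multiple of $a(u,u)$ plus a term absorbable into $E$; (2) bound the damping cross-term $\int_\Omega g(u_t)u\,\mathrm{d}x$ by splitting $\Omega$ into $\{|u_t|\le 1\}$ and $\{|u_t|>1\}$, applying (H3) with $\beta(s)=|s|^{p+1}$, Young's inequality with exponents $p+2$ and $(p+2)/(p+1)$, and the embedding $H_0^1\hookrightarrow L^{p+2}$ (valid since $q<2^*$ controls the relevant exponents); this yields, on the region $|u_t|>1$, a term $\le C\big(\int_\Omega g(u_t)u_t\,\mathrm{d}x\big)^{(p+1)/(p+2)}a(u,u)^{1/(p+2)}$; (3) combine to get, after choosing $\varepsilon$ small, an inequality of the form
\begin{equation*}
  \mathcal{F}'(t) \leq -c\,E(t) + C\,\Big(-E'(t)\Big)^{\frac{p+1}{p+2}} E(t)^{\frac{1}{p+2}} \quad \text{(the $p>0$ case)},
\end{equation*}
and $\mathcal{F}'(t)\le -c E(t) + C(-E'(t))$ in the case $p=0$ (using also that $\int g(u_t)u$ is handled by plain Young near zero). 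For $p=0$ the linear-damping estimate gives $\mathcal{F}'\le -c\mathcal{F}$ after absorbing $-E'$, whence the exponential bound \eqref{4.1} with the shift $[t-1]^+$ coming from the crude treatment of small times / the region where the damping is only "weak near zero". For $p>0$, raising the inequality to the power $\frac{p+2}{p+1}$ and using $\mathcal{F}\sim E$, one arrives at $\mathcal{F}'(t)\le -c\,\mathcal{F}(t)^{\frac{p+2}{2}}$ (after another Young step to absorb the $(-E')^{(p+1)/(p+2)}$ term), and integrating this ODE inequality gives precisely the polynomial rate $E(t)\le (E(0)^{-p/2}+c\,\frac{p}{2}[t-1]^+)^{-2/(p+2)}$ in \eqref{2.2}.

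The main obstacle I anticipate is \emph{handling the logarithmic source term in the coercivity/sign estimates}: unlike the pure power case $|u|^{q-2}u$, the term $\int_\Omega |u|^q\log|u|\,\mathrm{d}x$ changes sign (it is negative where $|u|<1$) and is not homogeneous, so the usual Nehari-manifold dichotomy must be replaced by the modified potential well argument of Section 2 — one needs the quantitative fact, from Lemmas \ref{lemma2.5}--\ref{lemma2.6} and the definition of $W$, that on the stable set $a(u,u)<r_*^2$ is preserved and yields $I(u)\ge c_0\,a(u,u)$ for some $c_0>0$, or in the regime $q<p+2$ that the source is subordinate to the damping dissipation. A secondary technical point is the logarithmic Sobolev / Gagliardo–Nirenberg type inequality needed to bound $\int_\Omega |u|^q\log|u|\,\mathrm{d}x$ by $a(u,u)$ times a slowly varying factor (an $\varepsilon$-power of the norm), which is what ultimately lets the source term be absorbed rather than destroying the sign of $\mathcal{F}'$; once that is in place, the rest is the routine perturbed-energy bookkeeping sketched above.
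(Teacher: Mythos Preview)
Your approach differs from the paper's: you use a perturbed-energy (Lyapunov) functional $\mathcal{F}=E+\varepsilon\int_\Omega u u_t\,\textrm{d}x$ and aim for a differential inequality, whereas the paper applies Nakao's difference-inequality lemma (Lemma~\ref{lemma4.1}), working on unit time intervals $[t,t+1]$ via the quantity $D(t)^{p+2}:=E(t)-E(t+1)$. Both routes share the crucial coercivity step $I(u)\ge\theta\,\mu(t)a(u,u)$ for some $\theta\in(0,1)$, which the paper obtains by a fibering argument (choosing $\lambda_0>1$ with $I(\lambda_0 u)=0$ and using $E(0)<M$); you correctly flag this as the main obstacle and point to the right source (the stable-set invariance via Lemma~\ref{lemma2.3} and Corollary~\ref{corollary2.1}).

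For $p=0$ your sketch is sound and yields the exponential bound. For $p>0$, however, there is a genuine gap. In $\Phi'(t)=\|u_t\|_2^2 - I(u) - \int_\Omega g(u_t)u\,\textrm{d}x$, the term $\|u_t\|_2^2$ cannot be controlled by $-E'(t)$ when the damping is degenerate near zero: one only has $\|u_t\|_2^2\le C(-E')^{2/(p+2)}+C(-E')$. Your claimed intermediate inequality $\mathcal{F}'\le -cE + C(-E')^{(p+1)/(p+2)}E^{1/(p+2)}$ omits this contribution, and the subsequent step (``raising to the power $(p+2)/(p+1)$'' and ``another Young step'' to reach $\mathcal{F}'\le -c\,\mathcal{F}^{(p+2)/2}$) does not follow: applying Young's inequality to $(-E')^{(p+1)/(p+2)}E^{1/(p+2)}$ returns $-E'+E$, which would force \emph{exponential} rather than polynomial decay. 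To make the Lyapunov route work for $p>0$ one typically needs a weighted multiplier such as $E(t)^{p/2}\int_\Omega u u_t\,\textrm{d}x$ (the extra power compensating the degeneracy) or an integral inequality of the form $\int_S^T E^{(p+2)/2}\,\textrm{d}t\le CE(S)$. The paper's Nakao approach sidesteps the issue entirely: it never needs $\|u_t(t)\|_2^2$ pointwise, only the integrated bound $\int_t^{t+1}\|u_\tau\|_2^2\,\textrm{d}\tau\le B_2 D(t)^2$ (inequality~\eqref{4.8}) together with a mean-value choice of $t_1,t_2$; multiplying the equation by $u$, integrating over $[t_1,t_2]$, and combining with $E\le \tfrac12\|u_t\|_2^2+C_6 I(u)$ then yields $E(t)^{(p+2)/2}\le C\big(E(t)-E(t+1)\big)$, to which Lemma~\ref{lemma4.1} applies directly.
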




In order to state the blow-up result, we need to impose additional assumptions on the damping:
\vskip 0.1in

$({\bf{H3'}})$  $g(s)=|s|^{p}s$ is a polynomial function.
\vskip 0.1in

\begin{thm}\label{theorem5.1}
In addition to $({\bf{H1}}), ({\bf{H2}})$and $({\bf{H3'}})$, we assume that $q>p+2$, $u_{0}\in H_{0}^{1}(\Omega), u_{1}\in L^{2}(\Omega)$ and $E(0)\leq 0$.
Then, the weak solution $u$ of \eqref{1.1}  furnished by  theorem \ref{theorem3.1} blows up in finite time.
\end{thm}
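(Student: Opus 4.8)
\textbf{Proof proposal for Theorem \ref{theorem5.1}.}

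The plan is to argue by contradiction using the concavity method of Levine, adapted to the logarithmic source and the variable coefficient operator. Suppose the weak solution $u$ exists globally. Define the auxiliary functional
\begin{equation*}
  G(t) = \|u(t)\|_2^2 + \int_0^t \|u(s)\|_2^2\,\mathrm{d}s \cdot 0 + \text{(lower order terms)},
\end{equation*}
or more precisely work with $G(t)=\|u(t)\|_2^2$ plus a small correction, and study $L(t)=G(t)^{-\alpha}$ for a suitable $\alpha>0$. The first step is to compute $G'(t)=2\int_\Omega u u_t\,\mathrm{d}x$ and then $G''(t)$, using the equation \eqref{1.1} tested with $u$: this gives
\begin{equation*}
  G''(t) = 2\|u_t\|_2^2 - 2\mu(t)a(u,u) - 2\int_\Omega |u_t|^p u_t\, u\,\mathrm{d}x + 2\int_\Omega |u|^q\log|u|\,\mathrm{d}x.
\end{equation*}
The key is to replace the source integral using the energy identity $E(t)\le E(0)\le 0$ together with the relation between $\int_\Omega|u|^q\log|u|\,\mathrm{d}x$ and $E(t)$, $\|u_t\|_2^2$, $\mu(t)a(u,u)$ and $\|u\|_q^q$ coming from \eqref{2.6}. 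Since $q>p+2>2$, this produces a term $\theta\|u_t\|_2^2 + \theta'\mu(t)a(u,u) + (\text{positive})\|u\|_q^q$ with favorable constants, after absorbing $-\frac{1}{q^2}\|u\|_q^q$ — here one uses that $\|u\|_q^q$ is lower order compared to $\int|u|^q\log|u|$ on the set where $|u|$ is large, and is harmless where $|u|\le 1$.

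The second step is to control the damping term $\int_\Omega |u_t|^p u_t\, u\,\mathrm{d}x$. Using H\"older's inequality with exponents $\frac{p+2}{p+1}$ and $p+2$, bound it by $\varepsilon \|u\|_{p+2}^{p+2} + C_\varepsilon \|u_t\|_{p+2}^{p+2}$, and then by the embedding $H_0^1(\Omega)\hookrightarrow L^{p+2}(\Omega)$ (valid since $p+2<q<2^*$) absorb $\|u\|_{p+2}^{p+2}$ into $\mu(t)a(u,u)$ raised to a power, or more simply keep it, while $\|u_t\|_{p+2}^{p+2}\le c_1^{-1}\int_\Omega g(u_t)u_t\,\mathrm{d}x = -c_1^{-1}E'(t)$ up to constants. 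This is exactly the standard trick of ``paying'' the damping with the energy derivative, and it introduces a term proportional to $-E'(t)$ on the right-hand side of the $G''$ estimate. Then one forms a modified functional $\mathcal{H}(t) = G(t)^{1-\alpha} + \lambda\left(\text{something involving }E\right)$ or directly $L(t)=(G(t)+\text{correction})^{-\alpha}$ and shows $L''(t)\le 0$ while $L'(0)<0$, which forces $L$ to reach zero in finite time, i.e. $G\to\infty$, contradicting global existence.

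The main obstacle I anticipate is twofold. First, the logarithmic source does not obey a clean homogeneity relation like $\int u\cdot|u|^q u = (q+2)\times(\text{energy-type term})$; instead one gets $\int|u|^q\log|u|$ appearing on both sides and must carefully track the correction term $\frac{1}{q^2}\|u\|_q^q$ from \eqref{2.6} and \eqref{2.8}, showing it can be dominated. A logarithmic Sobolev inequality or the elementary bound $|u|^q\log|u| \le \frac{1}{q\delta e}|u|^{q+\delta}$ for $|u|>1$ may be needed to relate things to $\|u\|_{q+\delta}$, requiring $q+\delta<2^*$. Second, because $\mu(t)$ is time-dependent and only $\mu'(t)<0$, $\mu_0\le\mu(t)$, one must make sure the sign of every $\mu(t)$-weighted term is handled — here $\mu'(t)<0$ actually helps in the energy identity $E'(t) = \frac{1}{2}\mu'(t)a(u,u) - \int_\Omega g(u_t)u_t\,\mathrm{d}x \le -\int_\Omega g(u_t)u_t\,\mathrm{d}x$, so the nonautonomous feature is benign. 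Choosing the exponent $\alpha$ and the coefficients so that the differential inequality $L''\le 0$ genuinely closes — i.e. verifying $G''G - (1+\alpha)(G')^2 \ge 0$ via Cauchy–Schwarz on $G'=2\int u u_t$ — is the delicate bookkeeping step, and the condition $q>p+2$ is precisely what makes the surplus coefficient positive.
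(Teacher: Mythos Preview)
Your proposal has a genuine structural gap. The Levine concavity scheme you outline --- take $G(t)=\|u(t)\|_2^2$, show $G''G-(1+\alpha)(G')^2\ge 0$, and conclude $G^{-\alpha}$ is concave --- does not close when the damping is nonlinear. In $G''$ the term $-2\int_\Omega |u_t|^p u_t\,u\,\mathrm{d}x$ is bounded (after H\"older/Young) by $C_\varepsilon\|u_t\|_{p+2}^{p+2}+\varepsilon\|u\|_{p+2}^{p+2}$, and while you correctly say this should be ``paid'' with $-E'(t)\ge\|u_t\|_{p+2}^{p+2}$, there is nowhere in the concavity inequality to insert $-E'$: adding a multiple of $\int_0^t\|u_\tau\|_{p+2}^{p+2}\,\mathrm{d}\tau$ (or of $E$) to $G$ destroys the Cauchy--Schwarz step $(G')^2\le 4G\|u_t\|_2^2$ on which the whole argument rests. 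Your fallback suggestion $\mathcal{H}(t)=G(t)^{1-\alpha}+\lambda(\cdots)$ is pointing in a better direction but, with $G=\|u\|_2^2$ as the base, the derivative of $G^{1-\alpha}$ still produces no $\|u_t\|_{p+2}^{p+2}$ budget.

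The paper instead follows the Georgiev--Todorova route: set $G(t)=-E(t)$ (so $G'(t)=-\frac12\mu'(t)a(u,u)+\|u_t\|_{p+2}^{p+2}>0$ already contains the damping budget), take
\[
Y(t)=G(t)^{1-\alpha}+\varepsilon\int_\Omega u u_t\,\mathrm{d}x,\qquad \alpha=\frac{q-p-2}{q(p+1)},
\]
and show directly that $Y'(t)\ge \xi\,Y(t)^{1/(1-\alpha)}$ for small $\varepsilon$, which forces finite-time blow-up of $Y$. The crucial point you are missing is that the \emph{base} of the power must be $-E$, not $\|u\|_2^2$: then $(1-\alpha)G^{-\alpha}G'$ supplies exactly the $\|u_t\|_{p+2}^{p+2}$ needed to absorb the damping after the H\"older/Young split, and the specific value of $\alpha$ makes $G^{\alpha(p+1)}\|u\|_{p+2}^{p+2}$ comparable to $G+\|u\|_q^q$. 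Incidentally, the logarithmic obstacle you anticipate is not one: substituting \eqref{2.6} turns $\int_\Omega |u|^q\log|u|\,\mathrm{d}x$ into $qG(t)+\frac{q}{2}\|u_t\|_2^2+\frac{q}{2}\mu(t)a(u,u)+\frac{1}{q}\|u\|_q^q$, all with the right sign, so no logarithmic Sobolev or $|u|^{q+\delta}$ bound is needed.
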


\begin{thm}\label{theorem5.2}
In addition to $({\bf{H1}}), ({\bf{H2}})$ and $({\bf{H3'}})$, we assume that $q>p+2$, $u_{0}\in H_{0}^{1}(\Omega), u_{1}\in L^{2}(\Omega)$, $a(u_{0},u_{0})> r_{*}^{2}$, $0<E(0)<M$ and
\begin{equation}\label{05.19}
  \int_{\Omega} u_{0}u_{1} \textrm{d}x  >\frac{p+1}{p+2}\left(  \frac{ B_{7}^{2}}{2\varepsilon' a_{0}\mu_{0}} \right)^{\frac{1}{p+1}}E(0),
\end{equation}
where $\varepsilon' \in (0,1)$ is a solution of the equation
\begin{eqnarray}\label{5.30}
\frac{p+1}{p+2}\left(  \frac{ B_{7}^{2}}{2\varepsilon'a_{0}\mu_{0}} \right)^{\frac{1}{p+1}}
=\frac{q(1-\varepsilon')}{2\sqrt{\frac{a_{0}\mu_{0}}{B_{7}}\left( 1+\frac{q(1-\varepsilon') }{2} \right)\left( \frac{q(1-\varepsilon') }{2}-1\right)}},
\end{eqnarray}
$B_{7}$ is the best constant of Sobolev embedding inequality $\|u\|_{2}^{2}\leq B_{7} \|\nabla u\|_{2}^{2}$. Then, the weak solution $u$ of \eqref{1.1} furnished by theorem \ref{theorem3.1} blows up in finite time.
\end{thm}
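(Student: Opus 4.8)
The plan is to adapt the classical concavity (Levine-type) argument used for wave equations with polynomial source and nonlinear damping (as in \cite{JDE1994,HAMO2021}) to the present setting, accounting for the logarithmic source and the variable coefficient operator. Define the auxiliary functional
\begin{equation*}
  L(t)=(H(t))^{1-\sigma}+\varepsilon'\int_{\Omega}u u_{t}\,\textrm{d}x,
\end{equation*}
where $H(t)=M-E(t)$ (or a suitable shifted energy, e.g. $H(t)=E_2-E(t)$ for an intermediate level $E_2$ with $E(0)<E_2<M$), and $\sigma>0$ is a small exponent to be fixed, with $\varepsilon'$ the constant appearing in \eqref{5.30}. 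The first step is to show, using the stability of the unstable set $V$ under the flow (one must prove $a(u(t),u(t))>r_*^2$ is preserved, which follows from the energy identity, the definition of $M$ and $d$, and the continuity of $t\mapsto a(u,u)$, exactly the invariance argument behind \eqref{eq2.12}), that $H(t)$ stays positive and in fact bounded below away from zero, and moreover that $H(t)$ is nondecreasing because $E'(t)=\tfrac12\mu'(t)a(u,u)-\int_\Omega u_t g(u_t)\,\textrm{d}x\le 0$ by (H2) and Remark \ref{remark2.2}. Along the way one also needs the key lower bound $\int_\Omega |u|^q\log|u|\,\textrm{d}x \ge$ (something controlling $\|\nabla u\|_2^2$ and $H(t)$) on the unstable set, which is where the condition $a(u_0,u_0)>r_*^2$ and the logarithmic Sobolev / Nehari estimates from Lemma \ref{lemma2.5}--\ref{lemma2.6} enter.

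Next I would differentiate $L(t)$. The term $\tfrac{d}{dt}\int_\Omega u u_t\,\textrm{d}x = \|u_t\|_2^2 - \mu(t)a(u,u) - \int_\Omega u g(u_t)\,\textrm{d}x + \int_\Omega |u|^q\log|u|\,\textrm{d}x$, and using \eqref{2.6} one rewrites $-\mu(t)a(u,u)+\int_\Omega|u|^q\log|u|\,\textrm{d}x$ in terms of $-2E(t)+\|u_t\|_2^2+\tfrac2q\int|u|^q\log|u|\,\textrm{d}x - \tfrac{2}{q^2}\|u\|_q^q$, so that
\begin{equation*}
  L'(t)\ge (1-\sigma)H^{-\sigma}H'(t) + \varepsilon'\Big[\big(1+\tfrac{\theta}{2}\big)\|u_t\|_2^2 + \tfrac{\theta}{2}\mu(t)a(u,u) + cH(t)\Big] - \varepsilon'\int_\Omega u g(u_t)\,\textrm{d}x,
\end{equation*}
for a suitable $\theta=q(1-\varepsilon')/2-1>0$ (this is exactly why $q>p+2\ge 2$ and the choice of $\varepsilon'$ in \eqref{5.30} matter: one needs $q(1-\varepsilon')/2>1$). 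The damping cross term is estimated by Young's and Hölder's inequalities: $|\int_\Omega u g(u_t)\,\textrm{d}x| = |\int_\Omega u|u_t|^p u_t\,\textrm{d}x| \le \delta \|u\|_{p+2}^{p+2} + C_\delta \|u_t\|_{p+2}^{p+2}$, and $\|u\|_{p+2}^{p+2}$ is absorbed into $\|\nabla u\|_2^2\sim a(u,u)$ and $H(t)$ powers via $p+2<q$ together with the lower bound on $H(t)$ (this step crucially uses $p+2<q$, so that $\|u\|_{p+2}$ is dominated by a power strictly less than the one available from $H(t)$ and the source), while $\|u_t\|_{p+2}^{p+2}\le C\,(-E'(t)) \le C H'(t)$ by (H3$'$). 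The condition $c_1>\tfrac{1}{p+2}$ from Theorem \ref{theorem3.1} and the sign condition \eqref{05.19} on $\int u_0 u_1$ ensure $L(0)>0$ and that the constants combine with the right sign.

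Choosing $\sigma$ small enough (typically $0<\sigma\le \min\{\tfrac{q-p-2}{q(p+2)},\ldots\}$) and then $\delta$ and $\varepsilon'$ appropriately, one arrives at a differential inequality of the form $L'(t)\ge \xi\, L(t)^{1/(1-\sigma)}$ for some $\xi>0$, once one also checks the standard interpolation bound $L(t)^{1/(1-\sigma)}\le C\big(H(t)+|\int_\Omega uu_t\,\textrm{d}x|^{1/(1-\sigma)}\big)\le C\big(H(t)+\|u\|_2^{2/(1-2\sigma)}+\|u_t\|_2^2\big)$ and absorbs $\|u\|_2$ into $\|\nabla u\|_2$ via $B_7$ and into $H(t)$. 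Since $1/(1-\sigma)>1$ and $L(0)>0$, this ODE inequality forces $L(t)\to+\infty$ in finite time $T^*\le \tfrac{1-\sigma}{\xi\sigma}L(0)^{-\sigma/(1-\sigma)}$, which means $\|u(t)\|_2$ (hence the solution) blows up no later than $T^*$; combined with the local existence/continuation theorem this yields finite-time blow-up. The main obstacle I anticipate is the bookkeeping needed to absorb the logarithmic source term: unlike the pure power case, $\int_\Omega|u|^q\log|u|\,\textrm{d}x$ is not homogeneous and can be negative where $|u|<1$, so one must split $\Omega=\{|u|<1\}\cup\{|u|\ge1\}$ and use a logarithmic Sobolev-type inequality (or an $\varepsilon$-perturbation $|u|^q\log|u|\ge -C_\varepsilon + |u|^{q+\varepsilon}$-type bound on the large set) to recover a genuine lower bound proportional to $\|\nabla u\|_2^2$ plus a controllable remainder — getting the signs and the exponent $\theta$ in \eqref{5.30} to close the loop is the delicate part, and it is precisely the reason the hypothesis reduces to the special damping $g(u_t)=|u_t|^p u_t$ rather than the general (H3).
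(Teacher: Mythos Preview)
Your proposal follows the Levine-type concavity scheme with an auxiliary functional $L(t)=H(t)^{1-\sigma}+\varepsilon'\int_\Omega uu_t\,\textrm{d}x$ and a nonlinear differential inequality $L'\ge \xi L^{1/(1-\sigma)}$. That is exactly the method the paper uses for Theorem~\ref{theorem5.1} (the case $E(0)\le 0$), but it is \emph{not} the route taken for Theorem~\ref{theorem5.2}. For positive initial energy the paper abandons the $H^{1-\sigma}$ functional altogether and instead works directly with
\[
F(t)=\int_\Omega uu_t\,\textrm{d}x - h_1(\varepsilon')E(t),\qquad h_1(\varepsilon')=\tfrac{p+1}{p+2}\Big(\tfrac{B_7^2}{2\varepsilon' a_0\mu_0}\Big)^{1/(p+1)}.
\]
After computing $\tfrac{d}{dt}\int_\Omega uu_t$, inserting the energy identity with a weight $\varepsilon_1$, and bounding the damping cross term by Young together with the convexity estimate $\tfrac{\|u\|_{p+2}^{p+2}}{p+2}\le \theta\tfrac{\|u\|_2^2}{2}+(1-\theta)\tfrac{\|u\|_q^q}{q}$, one obtains a \emph{linear} differential inequality $F'(t)\ge h_2(\varepsilon')F(t)$. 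The parameter $\varepsilon'$ in the statement is precisely the value at which the two functions $h_1$ and $h_3=q(1-\varepsilon_1)/h_2$ coincide, so that both sides involve the same combination $\int uu_t-h_1 E$; hypothesis~\eqref{05.19} is then nothing other than $F(0)>0$. Gronwall gives exponential growth of $F(t)$, hence of $\|u\|_2^2$, and the contradiction comes not from a blow-up ODE but from an \emph{upper} bound: using $E(0)-E(t)\ge \int_0^t\|u_\tau\|_{p+2}^{p+2}\,d\tau$ one shows $\|u(t)\|_2^2\le 2\|u_0\|_2^2+2B_8 t\,E(0)$, i.e.\ at most linear growth.

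The two approaches differ in what they require and what they deliver. Your scheme is more flexible and would likely succeed under a milder (or merely different) smallness/sign condition on the data, but it does not explain why the very specific equation~\eqref{5.30} for $\varepsilon'$ and the threshold~\eqref{05.19} appear in the hypotheses: in your argument $\varepsilon'$ is a free small coupling constant, whereas in the paper it is the unique crossing point of $h_1$ and $h_3$ and cannot be moved. Conversely, the paper's argument is shorter, avoids the bookkeeping of the exponent $\sigma$ and the interpolation $L^{1/(1-\sigma)}\le C(H+\|u_t\|_2^2+\ldots)$, and handles the logarithmic term in one stroke via $I(u(t))<0$ (established through the auxiliary functional $F(t)=\tfrac1q\|u\|_q^q+I(u)$ and the invariance of the unstable set), rather than by splitting $\{|u|<1\}\cup\{|u|\ge1\}$. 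If your goal is to prove the theorem exactly as stated, you should switch to the linear-inequality/exponential-vs-linear-growth contradiction; if you pursue your route you will end up proving a slightly different statement.
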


Now, let us give some properties of $J$, $I$ and $E$.

\begin{lem}\label{lemma2.2}
Let $u\in H_{0}^{1}(\Omega)\backslash\{0\}$, then we have

(i) $\lim\limits_{\lambda\rightarrow 0}J(\lambda u)=0$, $\lim\limits_{\lambda\rightarrow +\infty}J(\lambda u)=-\infty$;

(ii) There exists a unique $\lambda_{*}>0$, such that
\begin{equation}\label{2.15}
I(\lambda u)=\lambda \frac{d}{d\lambda}J(\lambda u)
\left\{\begin{array}{lll}
  &>0, &0<\lambda<\lambda_{*},\\
  &=0, &\lambda=\lambda_{*},\\
 &<0,  &\lambda>\lambda_{*}.
\end{array}\right.
\end{equation}
\end{lem}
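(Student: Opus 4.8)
The plan is to analyze the real function $j(\lambda) := J(\lambda u)$ for a fixed $u \in H_0^1(\Omega)\setminus\{0\}$ and $t$ fixed, using the explicit form of $J$ from \eqref{2.8}. Writing $A := \mu(t)a(u,u) > 0$ (positive by \eqref{2.5} since $u\neq 0$), we compute
\begin{equation*}
j(\lambda) = \frac{\lambda^2}{2}A - \frac{\lambda^q}{q}\int_\Omega |u|^q\log|u|\,\mathrm{d}x - \frac{\lambda^q\log\lambda}{q}\|u\|_q^q + \frac{\lambda^q}{q^2}\|u\|_q^q,
\end{equation*}
where I have used $\log(\lambda|u|) = \log\lambda + \log|u|$. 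For part (i): as $\lambda\to 0^+$, the $\lambda^2$ term and all $\lambda^q$ terms vanish (note $\lambda^q\log\lambda\to 0$ since $q>2>0$), giving $j(\lambda)\to 0$. As $\lambda\to+\infty$, since $q>2$ the dominant term is $-\frac{\lambda^q\log\lambda}{q}\|u\|_q^q$, which tends to $-\infty$ because $\|u\|_q^q>0$; this dominates the positive $\frac{\lambda^2}{2}A$ term, so $j(\lambda)\to-\infty$.

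For part (ii), the key identity to establish first is $I(\lambda u) = \lambda j'(\lambda)$, i.e. $\lambda\frac{d}{d\lambda}J(\lambda u)$. Differentiating $j$ and multiplying by $\lambda$ gives
\begin{equation*}
\lambda j'(\lambda) = \lambda^2 A - \lambda^q\int_\Omega |u|^q\log|u|\,\mathrm{d}x - \lambda^q\log\lambda\,\|u\|_q^q = \mu(t)a(\lambda u,\lambda u) - \int_\Omega |\lambda u|^q\log|\lambda u|\,\mathrm{d}x = I(\lambda u),
\end{equation*}
which matches the definition of $I$. Then I would show $j'$ has a unique positive zero with the claimed sign pattern. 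The cleanest route is to factor: for $\lambda>0$,
\begin{equation*}
j'(\lambda) = \lambda\Big(A - \lambda^{q-2}\big[\textstyle\int_\Omega |u|^q\log|u|\,\mathrm{d}x + \log\lambda\,\|u\|_q^q\big]\Big) =: \lambda\big(A - \lambda^{q-2}h(\lambda)\big),
\end{equation*}
so the sign of $j'(\lambda)$ is that of $A - \lambda^{q-2}h(\lambda)$. Set $\psi(\lambda) := \lambda^{q-2}h(\lambda) = \lambda^{q-2}\int_\Omega |u|^q\log|u|\,\mathrm{d}x + \lambda^{q-2}\log\lambda\,\|u\|_q^q$. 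I would check: $\psi(\lambda)\to 0$ as $\lambda\to 0^+$ (both terms vanish since $q-2>0$), $\psi(\lambda)\to+\infty$ as $\lambda\to+\infty$ (dominant term $\lambda^{q-2}\log\lambda\,\|u\|_q^q$), and $\psi$ is eventually strictly increasing. By the intermediate value theorem there is $\lambda_*>0$ with $\psi(\lambda_*)=A$; monotonicity of $\psi$ near and beyond $\lambda_*$ gives uniqueness and the sign pattern $j'>0$ on $(0,\lambda_*)$, $j'=0$ at $\lambda_*$, $j'<0$ on $(\lambda_*,\infty)$, which is exactly \eqref{2.15}.

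The main obstacle is the non-monotonicity of $\psi$ globally: because $\int_\Omega|u|^q\log|u|\,\mathrm{d}x$ can be negative (when $u$ is small on a set of positive measure), the term $\lambda^{q-2}\log\lambda\,\|u\|_q^q$ is negative for $\lambda<1$, so $\psi$ need not be monotone on all of $(0,\infty)$. To handle this I would compute $\psi'(\lambda)$ and show that $A - \psi(\lambda)$, viewed along the curve, changes sign exactly once: the clean way is to observe that $\frac{d}{d\lambda}\big(\lambda^{-(q-2)}j'(\lambda)/\lambda\big)$ or, more directly, the function $\lambda\mapsto j'(\lambda)/\lambda^{q-1}$ is strictly decreasing (its derivative, after simplification, equals $-(q-2)\lambda^{-(q-1)}A\cdot(\text{something}) - \lambda^{-1}\|u\|_q^q/\ldots$ — all terms negative), hence $j'(\lambda)/\lambda^{q-1}$ is strictly decreasing from $+\infty$ to $-\infty$ and vanishes at a unique $\lambda_*$, with the sign of $j'$ following. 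Since $j'(\lambda)$ and $j'(\lambda)/\lambda^{q-1}$ have the same sign for $\lambda>0$, this yields \eqref{2.15}. A further small point to verify carefully is that the dominated-convergence / limit manipulations for $\int_\Omega|u|^q\log|u|\,\mathrm{d}x$ are legitimate, which follows from \eqref{1.2} and the elementary bound $|s^q\log s|\le C(s^{q-\epsilon}+s^{q+\epsilon})$ guaranteeing integrability via the Sobolev embedding $H_0^1(\Omega)\hookrightarrow L^{q\pm\epsilon}(\Omega)$.
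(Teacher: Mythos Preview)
Your argument is correct and follows the same overall route as the paper: write $j(\lambda)=J(\lambda u)$, verify $I(\lambda u)=\lambda j'(\lambda)$, and then study the sign of $j'$ by dividing through by a power of $\lambda$. The only difference is the normalization. The paper divides by $\lambda$, setting $m(\lambda)=\lambda^{-1}j'(\lambda)=A-\lambda^{q-2}\int_\Omega|u|^q\log|u|\,\mathrm{d}x-\lambda^{q-2}\log\lambda\,\|u\|_q^q$, and then computes $m'(\lambda)$ to show $m$ is unimodal (increasing then decreasing), with $m(0^+)=A>0$ and $m(+\infty)=-\infty$, whence a unique root $\lambda_*$. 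You instead divide by $\lambda^{q-1}$, obtaining $j'(\lambda)/\lambda^{q-1}=A\lambda^{2-q}-\int_\Omega|u|^q\log|u|\,\mathrm{d}x-\log\lambda\,\|u\|_q^q$, whose derivative $A(2-q)\lambda^{1-q}-\lambda^{-1}\|u\|_q^q$ is manifestly negative for all $\lambda>0$ since $q>2$; strict monotonicity from $+\infty$ to $-\infty$ then gives the unique root directly. Your normalization is marginally cleaner because it yields global strict monotonicity and avoids the extra unimodality step, but the two arguments are really the same idea. One cosmetic point: your presentation would read better if you went straight to the $\lambda^{q-1}$ quotient rather than first introducing $\psi$ and then abandoning it.
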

\begin{proof}
For any $u\in H_{0}^{1}(\Omega)\backslash\{0\}$, we define
\begin{eqnarray*}
  j(\lambda):=J(\lambda u)&=&\frac{\lambda^{2}}{2}\mu(t)a(u(t),u(t))
  -\frac{\lambda^{q}}{q}\int_{\Omega}|u|^{q}\log(|\lambda u|)\textrm{d}x
  +\frac{\lambda^{q}}{q^{2}}\|u\|_{q}^{q}, \lambda>0.
\end{eqnarray*}
This map is introduced by Dr\'abek and Pohozaev \cite{D1997PROC}. By the definition of $I(u)$, we have
\begin{eqnarray*}
  j'(\lambda)&=&\lambda^{}\left( \mu(t)a(u(t),u(t))-\lambda^{q-2}\int_{\Omega}|u|^{q}\log(|u|)\textrm{d}x -\lambda^{q-2}\log(\lambda)\|u\|_{q}^{q} \right)\\
  &&=\frac{1}{\lambda}I(\lambda u).
\end{eqnarray*}
Let $m(\lambda)=\lambda^{-1} j'(\lambda)$. Taking the first derivative of $m(\lambda)$, we get
\begin{equation}\label{2.16}
  m'(\lambda)=-\lambda^{q-3}\left((q-2)\int_{\Omega}|u|^{q}\log(|u|)\textrm{d}x+ (q-2)\log(\lambda)\|u\|_{q}^{q} +\|u\|_{q}^{q} \right).
\end{equation}
It easily follows from \eqref{1.2} and \eqref{2.16} that there exists $0<\lambda_{1}<+\infty$ such that $m'(\lambda)>0$ if $0<\lambda<\lambda_{1}$; $m'(\lambda)<0$ if $\lambda>\lambda_{1}$. Thus $m(\lambda)$ is increasing in $(0,\lambda_{1})$ and decreasing in $(\lambda_{1}, +\infty)$. Since $m(0)=\mu(t)a(u(t),u(t))>0$ and $\lim\limits_{\lambda \rightarrow +\infty} m(\lambda)=-\infty$,  there exists $\lambda_{*}>0$ such that $m(\lambda_{*})=0$. Then $j'(\lambda_{*})=0$ and $j(\lambda)>0$ in $(0,\lambda_{*})$, $j(\lambda)<0$ in $(\lambda_{*}, +\infty)$. The proof  is complete.
\end{proof}

\begin{lem}\label{lemma2.4}
Let $u\in H^{1}_{0}(\Omega)\backslash\{0\}$, $\gamma\in (0,2^{*}-q)$ and
\begin{equation*}
  r(\gamma)=\left( \frac{\mu_{0}e\gamma}{K_{0}^{q+\gamma}} \right)^{\frac{1}{q+\gamma-2}},
\end{equation*}
where $K_{0}$ is a constant satisfying $\|u\|_{q+\gamma}\leq C_{q+\gamma}\|\nabla u\|_{2} \leq K_{0} [a(u,u)]^{\frac{1}{2}}$,
then we have

(i) if $0<a(u,u)<r^{2}(\gamma)$, then $I(u)>0$;

(ii) if $I(u)< 0$, then $a(u,u)>r^{2}(\gamma)$;

(iii) if $I(u)= 0$, then $a(u,u)\geq r^{2}(\gamma)$.
\end{lem}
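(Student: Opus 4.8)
\textbf{Proof proposal for Lemma \ref{lemma2.4}.}

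The plan is to exploit the logarithmic-to-power inequality $\log s \le \frac{s^{\gamma}}{e\gamma}$ for $s>0$ and $\gamma>0$ (which gives, for $|u|\ge 1$, the bound $|u|^{q}\log|u| \le \frac{1}{e\gamma}|u|^{q+\gamma}$), together with the embedding $\|u\|_{q+\gamma}\le K_0[a(u,u)]^{1/2}$ assumed in the statement. First I would bound the source term in $I(u)$: since $\log|u|\le 0$ wherever $|u|\le 1$, one has
\begin{equation*}
  \int_{\Omega}|u|^{q}\log|u|\,\textrm{d}x \le \int_{\{|u|\ge 1\}}|u|^{q}\log|u|\,\textrm{d}x \le \frac{1}{e\gamma}\int_{\Omega}|u|^{q+\gamma}\,\textrm{d}x = \frac{1}{e\gamma}\|u\|_{q+\gamma}^{q+\gamma} \le \frac{K_0^{q+\gamma}}{e\gamma}[a(u,u)]^{\frac{q+\gamma}{2}}.
\end{equation*}
Combining this with the coercivity estimate \eqref{2.5} and $\mu(t)\ge\mu_0$, I get
\begin{equation*}
  I(u) = \mu(t)a(u,u) - \int_{\Omega}|u|^{q}\log|u|\,\textrm{d}x \ge \mu_0 a(u,u) - \frac{K_0^{q+\gamma}}{e\gamma}[a(u,u)]^{\frac{q+\gamma}{2}} = a(u,u)\Big(\mu_0 - \frac{K_0^{q+\gamma}}{e\gamma}[a(u,u)]^{\frac{q+\gamma-2}{2}}\Big).
\end{equation*}

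The key observation is that the quantity in parentheses is positive precisely when $[a(u,u)]^{\frac{q+\gamma-2}{2}} < \frac{\mu_0 e\gamma}{K_0^{q+\gamma}}$, i.e. when $a(u,u) < \big(\frac{\mu_0 e\gamma}{K_0^{q+\gamma}}\big)^{\frac{2}{q+\gamma-2}} = r^2(\gamma)$; note $q+\gamma-2>0$ since $\gamma>0$ and $q>2$, so the exponent is well-defined and the map $t\mapsto t^{(q+\gamma-2)/2}$ is increasing. This immediately yields (i): if $0<a(u,u)<r^2(\gamma)$ then $a(u,u)>0$ and the bracket is strictly positive, so $I(u)>0$. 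For (ii), I argue by contraposition: if $a(u,u)\le r^2(\gamma)$ then by (i) (when the inequality is strict) $I(u)>0$, and when $a(u,u)=r^2(\gamma)$ the bracket is exactly zero so $I(u)\ge 0$; either way $I(u)\ge 0$, contradicting $I(u)<0$, hence $I(u)<0$ forces $a(u,u)>r^2(\gamma)$. For (iii), if $I(u)=0$ then $a(u,u)>0$ (since $u\ne 0$ and \eqref{2.5} gives $a(u,u)\ge a_0\|\nabla u\|_2^2>0$), so dividing the displayed inequality by $a(u,u)$ gives $0=\frac{I(u)}{a(u,u)}\ge \mu_0 - \frac{K_0^{q+\gamma}}{e\gamma}[a(u,u)]^{\frac{q+\gamma-2}{2}}$, which rearranges to $a(u,u)\ge r^2(\gamma)$.

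I expect the only genuine subtlety to be justifying the logarithmic inequality and its integrated form cleanly — in particular making sure the split over $\{|u|\ge 1\}$ and $\{|u|<1\}$ is handled correctly (on the latter set the integrand is nonpositive, so dropping it only increases the right-hand side) and that $\|u\|_{q+\gamma}$ is finite, which is guaranteed by $\gamma<2^*-q$ together with the Sobolev embedding $H_0^1(\Omega)\hookrightarrow L^{q+\gamma}(\Omega)$. Everything else is elementary algebra with the increasing function $t\mapsto t^{(q+\gamma-2)/2}$. The constant $K_0$ absorbs both the Sobolev embedding constant $C_{q+\gamma}$ and the equivalence of $\|\nabla u\|_2$ with $[a(u,u)]^{1/2}$ coming from \eqref{2.5}, so no extra work is needed there.
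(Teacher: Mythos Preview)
Your proposal is correct and follows essentially the same approach as the paper: both use the elementary inequality $\log s \le \frac{s^{\gamma}}{e\gamma}$ (valid for all $s>0$, $\gamma>0$) together with the embedding $\|u\|_{q+\gamma}\le K_0[a(u,u)]^{1/2}$ to obtain the lower bound $I(u)\ge a(u,u)\big(\mu_0 - \frac{K_0^{q+\gamma}}{e\gamma}[a(u,u)]^{(q+\gamma-2)/2}\big)$, from which (i)--(iii) follow by elementary algebra. Your treatment is in fact more thorough than the paper's, which states the key inequality only for $x>1$ and then simply asserts that (i)--(iii) follow without spelling out the contrapositive and boundary-case arguments you give.
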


\begin{proof}
By the basic inequality ($e\gamma \log x\leq x^{\gamma}$ for any $x>1, \gamma>0$) and Sobolev inequality, we have
\begin{eqnarray*}
I(u)&=&\mu(t)a(u(t),u(t))-\int_{\Omega}|u|^{q}\log(|u|)\textrm{d}x\\
&\geq& \mu_{0}a(u,u)-\frac{1}{e\gamma}\|u\|_{q+\gamma}^{q+\gamma}\\
&\geq& \mu_{0}a(u,u)-\frac{1}{e\gamma}K_{0}^{q+\gamma}\left(a(u,u)\right)^{\frac{q+\gamma}{2}}\\
&=&\frac{K_{0}^{q+\gamma}}{e\gamma}a(u,u)\left( \frac{\mu_{0}e\gamma}{K_{0}^{q+\gamma}}-\left(a(u,u)\right)^{\frac{q+\gamma-2}{2}}\right).
\end{eqnarray*}
By above inequality, we know that (i), (ii) and (iii) hold.
\end{proof}

\begin{lem}\label{lemma2.5}
For the notation in Lemma \ref{lemma2.4}, we have
\begin{eqnarray*}
  0<r_{*}=\sup_{\gamma\in (0,2^{*}-q)}\left( \frac{\mu_{0}e\gamma}{K_{0}^{q+\gamma}} \right)^{\frac{1}{q+\gamma-2}}
  \leq \rho_{*}=\sup_{\gamma\in (0,2^{*}-q)}\left( \frac{\mu_{0}e\gamma}{K_{1}^{q+\gamma}} \right)^{\frac{1}{q+\gamma-2}}|\Omega|^{\frac{\gamma}{q(q+\gamma-2)}}
  <+\infty,
\end{eqnarray*}
where
$$
K_{1}=\sup_{u\in H^{1}_{0}\backslash\{0\}} \frac{\|u\|_{q}}{[a(u,u)]^{\frac{1}{2}}}.
$$
\end{lem}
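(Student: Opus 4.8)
The plan is to prove the two-sided chain of inequalities in Lemma~\ref{lemma2.5} by analysing the function $\gamma\mapsto \bigl(\mu_0 e\gamma/K_0^{q+\gamma}\bigr)^{1/(q+\gamma-2)}$ on the open interval $(0,2^*-q)$, together with the elementary comparison $\|u\|_q\le |\Omega|^{\gamma/(q(q+\gamma))}\|u\|_{q+\gamma}$ coming from H\"older's inequality, which relates the constants $K_0$ and $K_1$. First I would record that, since $q<2^*$, the interval $(0,2^*-q)$ is nonempty, so the suprema are taken over a nonempty set and are in particular $\ge$ the value at any fixed admissible $\gamma$; because each summand is strictly positive (both $\mu_0$ and $e\gamma$ are positive and $K_0$ is finite), this already gives $r_*>0$.

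Next I would establish finiteness, i.e. $\rho_*<+\infty$, which is the crux of the statement. The exponent $1/(q+\gamma-2)$ stays bounded on $(0,2^*-q)$ away from $\gamma=2-q$; the only potential source of blow-up is the endpoint $\gamma\to 0$, where $q+\gamma-2\to q-2>0$, so the exponent is in fact bounded there too, and near $\gamma\to 0$ the base $\mu_0 e\gamma/K_1^{q+\gamma}\to 0$, forcing the whole expression (including the $|\Omega|^{\gamma/(q(q+\gamma-2))}$ factor, which tends to $1$) to $0$. At the other endpoint $\gamma\to 2^*-q$ the exponent and base are both finite and positive (for $n\ge 3$; for $n=1,2$ one restricts $\gamma$ to a bounded subinterval or notes the quantities are continuous and the relevant constants $K_0,K_1$ remain finite by Sobolev embedding since $q+\gamma<2^*$). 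Hence the function $\gamma\mapsto \bigl(\mu_0 e\gamma/K_1^{q+\gamma}\bigr)^{1/(q+\gamma-2)}|\Omega|^{\gamma/(q(q+\gamma-2))}$ is continuous and bounded on $(0,2^*-q)$, so its supremum $\rho_*$ is finite.

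Finally I would prove the middle inequality $r_*\le\rho_*$. By H\"older's inequality on the bounded domain $\Omega$, for $u\in H_0^1(\Omega)\setminus\{0\}$ one has $\|u\|_q\le|\Omega|^{\frac{1}{q}-\frac{1}{q+\gamma}}\|u\|_{q+\gamma}=|\Omega|^{\frac{\gamma}{q(q+\gamma)}}\|u\|_{q+\gamma}$; combining this with the definitions of $K_0$ (via $\|u\|_{q+\gamma}\le K_0[a(u,u)]^{1/2}$) and of $K_1=\sup_u\|u\|_q/[a(u,u)]^{1/2}$ yields $K_1\le|\Omega|^{\frac{\gamma}{q(q+\gamma)}}K_0$, i.e. $K_0\ge|\Omega|^{-\frac{\gamma}{q(q+\gamma)}}K_1$ for every admissible $\gamma$. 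Substituting this lower bound for $K_0$ into the formula for $r(\gamma)$ and simplifying the powers of $|\Omega|$ shows that $r(\gamma)\le\bigl(\mu_0 e\gamma/K_1^{q+\gamma}\bigr)^{1/(q+\gamma-2)}|\Omega|^{\frac{\gamma}{q(q+\gamma-2)}}$ pointwise in $\gamma$; taking suprema over $\gamma\in(0,2^*-q)$ gives $r_*\le\rho_*$ and completes the proof.

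I expect the main obstacle to be the finiteness claim $\rho_*<+\infty$, specifically pinning down the behaviour of the base and exponent uniformly near the endpoints of $(0,2^*-q)$ and justifying that the Sobolev/Gagliardo--Nirenberg constant $K_1$ (equivalently $C_{q+\gamma}$) can be bounded uniformly for $\gamma$ in a neighbourhood of $2^*-q$; this requires care, since the Sobolev constant degenerates exactly at the critical exponent, so one must either keep $\gamma$ strictly inside $(0,2^*-q)$ and show the supremum is attained in the interior, or interpret the supremum correctly as a limit. The remaining steps (positivity and the H\"older comparison $K_0$ vs.\ $K_1$) are routine.
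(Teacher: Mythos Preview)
Your positivity argument and the H\"older comparison yielding $r(\gamma)\le\rho(\gamma)$ are correct and match the paper. Your treatment of $\rho_*<+\infty$ for $n\ge 3$ is also essentially fine: since $K_1$ is a \emph{fixed} constant (it is the optimal constant for the embedding into $L^q$, not $L^{q+\gamma}$), the map $\gamma\mapsto\rho(\gamma)$ extends continuously to the compact interval $[0,2^*-q]$, and this is exactly what the paper invokes.

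There is, however, a genuine gap in the case $n=1,2$. Here $2^*=+\infty$, so the supremum defining $\rho_*$ is taken over the unbounded interval $(0,+\infty)$; saying ``one restricts $\gamma$ to a bounded subinterval'' or ``the constants remain finite for each $\gamma$'' does not establish $\sup_{\gamma>0}\rho(\gamma)<+\infty$. You must actually analyse the behaviour of $\rho(\gamma)$ as $\gamma\to+\infty$. The paper does this by a direct calculus argument: setting $h(\gamma)=\log\rho(\gamma)$, computing $h'(\gamma)$ explicitly, and showing that $h$ has a unique interior maximiser $\gamma_*\in(0,+\infty)$, whence $\rho_*=e^{h(\gamma_*)}<+\infty$. (A shorter alternative, consistent with your endpoint-analysis strategy, is to check that $\log\rho(\gamma)\to -\log K_1+\tfrac{1}{q}\log|\Omega|$ as $\gamma\to+\infty$; combined with continuity and $\rho(\gamma)\to 0$ as $\gamma\to 0^+$ this also gives boundedness.)

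Your final paragraph reveals a related confusion worth correcting: $K_1$ is \emph{not} ``equivalently $C_{q+\gamma}$''; it is the embedding constant into $L^q$ and does not depend on $\gamma$ at all. That is precisely why the paper introduces $\rho(\gamma)$ (built from $K_1$) as an upper bound for $r(\gamma)$ (built from the $\gamma$-dependent $K_0$): it replaces the awkward problem of controlling $K_0$ uniformly in $\gamma$ by an explicit elementary function of $\gamma$. Your worry about Sobolev constants degenerating at the critical exponent is therefore a non-issue for bounding $\rho_*$.
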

\begin{proof}
Obviously, if $r_{*}$ exists, then $r_{*}>0$. So, we just have to prove that $\rho_{*}$ exists, $r(\gamma)\leq\rho(\gamma)$ and $\rho_{*}<+\infty$, where
\begin{equation*}
  \rho(\gamma)=\left( \frac{\mu_{0}e\gamma}{K_{1}^{q+\gamma}} \right)^{\frac{1}{q+\gamma-2}}|\Omega|^{\frac{\gamma}{q(q+\gamma-2)}}.
\end{equation*}
For any $u\in H^{1}_{0}(\Omega)\backslash\{0\}$, by using the H\"older inequality, we have
$$
\|u\|_{q}\leq |\Omega|^{\frac{\gamma}{q(q+\gamma)}}\|u\|_{q+\gamma}.
$$
Hence, we obtain
\begin{equation*}
  K_{0}=\sup_{u\in H^{1}_{0}\backslash\{0\}} \frac{\|u\|_{q+\gamma}}{[a(u,u)]^{\frac{1}{2}}}
  \geq  \sup_{u\in H^{1}_{0}\backslash\{0\}} \frac{\|u\|_{q}}{[a(u,u)]^{\frac{1}{2}}}|\Omega|^{\frac{-\gamma}{q(q+\gamma)}}
  \geq|\Omega|^{\frac{-\gamma}{q(q+\gamma)}}K_{1},
\end{equation*}
then
\begin{equation*}
  \left( \frac{\mu_{0}e\gamma}{K_{0}^{q+\gamma}} \right)^{\frac{1}{q+\gamma-2}}
  \leq \left( \frac{\mu_{0}e\gamma}{K_{1}^{q+\gamma}} \right)^{\frac{1}{q+\gamma-2}}|\Omega|^{\frac{\gamma}{q(q+\gamma-2)}}.
\end{equation*}
That is, $r(\gamma)\leq\rho(\gamma)$.

Next, we prove $\rho_{*}$ exists and $\rho_{*}<+\infty$.

\textbf{Case 1:} If $n\geq 3$, then $\gamma\in (0,2^{*}-q)=(0,\frac{2n}{n-2}-q)$. Since $\rho(\gamma)$ is continuous on $[0,\frac{2n}{n-2}-q]$, we obtain that $\rho_{*}$ exists and
\begin{equation*}
  \rho_{*}=\sup_{\gamma\in(0,2^{*}-q)}\rho(\gamma)\leq \max_{\gamma\in[0,2^{*}-q]}\rho(\gamma)=\rho(\gamma_{*}) <+\infty.
\end{equation*}

\textbf{Case 2:} If $n=1, 2$, then $\gamma\in (0,+\infty)$. We define
\begin{eqnarray*}
  h(\gamma)&:=&\log\rho(\gamma)=\frac{1}{q+\gamma-2}\left( \log\gamma+\log(\mu_{0}e)-(q+\gamma)\log K_{1} \right)\\
  &&+\frac{\gamma}{q(q+\gamma-2)}|\Omega|, \ \ \gamma\in (0,\infty),
\end{eqnarray*}
then
\begin{equation*}
  h'(\gamma)=\frac{g(\gamma)}{q\gamma(q+\gamma-2)^{2}},
\end{equation*}
where
\begin{equation*}
  g(\gamma)=q^{2}+q\gamma-2q-q\gamma\log(\mu_{0}e)-q\gamma\log\gamma
  +2q\gamma\log K_{1}+q\gamma\log|\Omega|-2\gamma\log|\Omega|.
\end{equation*}
Taking the first derivative of $g(\gamma)$, we have
\begin{eqnarray*}
  g'(\gamma)&=&q-q\log(\mu_{0}e)-q\log\gamma-q+2q\log K_{1}+q\log|\Omega|-2\log|\Omega|\\
  &=&q\log\frac{K_{1}^{2}|\Omega|^{1-\frac{2}{q}}}{\mu_{0}e\gamma},
\end{eqnarray*}
which implies that $g(\gamma)$ is strictly increasing on $(0,\frac{K_{1}^{2}|\Omega|^{1-\frac{2}{q}}}{\mu_{0}e})$; strictly decreasing on $(\frac{K_{1}^{2}|\Omega|^{1-\frac{2}{q}}}{\mu_{0}e},+\infty)$.

Moreover, it is easy to see that
\begin{equation*}
\lim_{\gamma\rightarrow 0^{+}}g(\gamma)=q^{2}-2q>0,
\end{equation*}
 and
\begin{equation*}
 \lim_{\gamma\rightarrow +\infty}g(\gamma)=\lim_{\gamma\rightarrow +\infty}q^{2}-2q+q\gamma(1+\log(K_{1}^{2}|\Omega|^{1-\frac{2}{q}})-\log\gamma)=-\infty.
\end{equation*}
Then, there exists a unique $\gamma_{*}\in (\frac{K_{1}^{2}|\Omega|^{1-\frac{2}{q}}}{\mu_{0}e},+\infty)$ such that $g(\gamma_{*})=0$.

Hence, we have $g(\gamma)>0$ if $\gamma\in(0,\gamma_{*})$; $g(\gamma)<0$ if $\gamma\in (\gamma_{*},+\infty)$. Then, the maximum of $h(\gamma)$ can be reached at $\gamma=\gamma_{*}$. So,
\begin{equation*}
  \rho_{*}=\sup_{\gamma\in (0,+\infty)}\rho(\gamma)=e^{h(\gamma_{*})}<+\infty.
\end{equation*}
\end{proof}

\begin{corollary} \label{corollary2.1}
Let $u\in H^{1}_{0}(\Omega)\backslash\{0\}$. By combining Lemma \ref{lemma2.4} and Lemma \ref{lemma2.5}, we have

(i) if $0<a(u,u)<r_{*}^{2}$, then $I(u)>0$;

(ii) if $I(u)< 0$, then $a(u,u)> r_{*}^{2}$;

(iii) if $I(u)= 0$, then $a(u,u)\geq r_{*}^{2}$,\\
where $r_{*}$ is defined in Lemma \ref{lemma2.5}.
\end{corollary}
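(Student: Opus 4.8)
The guiding idea is that Lemma \ref{lemma2.4} already supplies the three implications for \emph{each} fixed admissible exponent $\gamma$, but with the $\gamma$-dependent threshold $r(\gamma)$ rather than the uniform one $r_{*}$; the corollary is obtained simply by optimizing the choice of $\gamma$, i.e.\ by passing to the supremum that defines $r_{*}$ in Lemma \ref{lemma2.5}. Throughout I will use that $u\in H_{0}^{1}(\Omega)\backslash\{0\}$ forces $a(u,u)\geq a_{0}\|\nabla u\|_{2}^{2}>0$ by \eqref{2.5} and the Poincar\'e inequality, so the hypothesis ``$a(u,u)>0$'' is automatic.

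For part (i), assume $0<a(u,u)<r_{*}^{2}$. Since $r_{*}=\sup_{\gamma\in(0,2^{*}-q)}r(\gamma)$, the strict inequality $a(u,u)^{1/2}<r_{*}$ lets me choose $\gamma_{0}\in(0,2^{*}-q)$ with $a(u,u)<r^{2}(\gamma_{0})$; then Lemma \ref{lemma2.4}(i) applied with this $\gamma_{0}$ gives $I(u)>0$. For part (iii), assume $I(u)=0$; Lemma \ref{lemma2.4}(iii) applies for every $\gamma\in(0,2^{*}-q)$, so $a(u,u)\geq r^{2}(\gamma)$ for all such $\gamma$, and taking the supremum over $\gamma$ yields $a(u,u)\geq r_{*}^{2}$.

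For part (ii), assume $I(u)<0$. Lemma \ref{lemma2.4}(ii) gives $a(u,u)>r^{2}(\gamma)$ for every $\gamma\in(0,2^{*}-q)$, hence $a(u,u)\geq r_{*}^{2}$ after taking the supremum. Upgrading this to the strict inequality claimed is the one genuine subtlety, and it requires the supremum defining $r_{*}$ to be attained. This is exactly what the proof of Lemma \ref{lemma2.5} provides: $\gamma\mapsto r(\gamma)$ is continuous (the best Sobolev constant $K_{0}=K_{0}(\gamma)$ depends continuously on $\gamma$), it tends to $0$ as $\gamma\to0^{+}$, and it attains its maximum at some $\gamma_{*}$ --- by compactness of $[0,2^{*}-q]$ when $n\geq3$, and by the same critical-point analysis carried out there for $\rho$ when $n=1,2$. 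Thus $r_{*}=r(\gamma_{*})$, and Lemma \ref{lemma2.4}(ii) with $\gamma=\gamma_{*}$ yields $a(u,u)>r^{2}(\gamma_{*})=r_{*}^{2}$. Alternatively one can argue by contradiction: if $a(u,u)\leq r_{*}^{2}$, then either $a(u,u)<r_{*}^{2}$, which forces $I(u)>0$ by part (i) already established, or $a(u,u)=r_{*}^{2}=r^{2}(\gamma_{*})$, which contradicts Lemma \ref{lemma2.4}(ii); either way a contradiction.

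The main obstacle, therefore, is not the optimization over $\gamma$ --- which is a direct quotation of Lemma \ref{lemma2.4} --- but confirming that the threshold $r_{*}$ is actually realized as a maximum, so that the strict inequality in (ii) survives the passage to the supremum; this is handled by the analysis already done in Lemma \ref{lemma2.5}.
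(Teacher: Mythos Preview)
Your approach is exactly the one the paper intends: the corollary is stated without proof as a direct combination of Lemma~\ref{lemma2.4} and Lemma~\ref{lemma2.5}, and your arguments for (i) and (iii) --- choose $\gamma_{0}$ close to the supremum for (i), take the supremum of the lower bounds for (iii) --- are the natural and correct ones.

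The one point to flag is your treatment of the strict inequality in (ii). You correctly isolate the issue: passing to the supremum of the strict inequalities $a(u,u)>r^{2}(\gamma)$ only yields $a(u,u)\geq r_{*}^{2}$ unless the supremum is attained. But your claim that ``this is exactly what the proof of Lemma~\ref{lemma2.5} provides'' overstates what that proof actually does. Lemma~\ref{lemma2.5} shows that the \emph{upper bound} $\rho_{*}$ is attained, working with the $\gamma$-independent constant $K_{1}$; it never analyzes $r(\gamma)$ directly, and in particular it does not establish continuity of $\gamma\mapsto K_{0}(\gamma)$ or carry out the critical-point computation for $r$ in the case $n=1,2$. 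Your sketch that a parallel argument should go through is plausible (continuity of the best Sobolev constant in the subcritical range is standard), but it is additional work beyond what the paper supplies, and in the unbounded case $n=1,2$ the analysis would have to account for the $\gamma$-dependence of $K_{0}$, which the paper's computation for $\rho$ avoids precisely because $K_{1}$ is fixed. In short: the paper simply asserts (ii) with strict inequality and gives no argument; your proof is more careful than the paper's, but the step you attribute to Lemma~\ref{lemma2.5} is not actually there.
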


\begin{lem}\label{lemma2.6}
Assume that $q$ satisfies \eqref{1.2}. We obtain that
\begin{equation}\label{eq2.14}
  d\geq M:=\left( \frac{q-2}{2q} \right)\mu_{0}r_{*}^{2}>0,
\end{equation}
where $r_{*}$ is defined in Lemma \ref{lemma2.5}.
\end{lem}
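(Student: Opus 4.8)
The plan is to bound $J(u)$ from below uniformly over the Nehari manifold $\NN$ and then pass to the infimum in \eqref{2.12}. The key ingredient is the algebraic identity already recorded in \eqref{2.8},
\[
J(u)=\frac{1}{q}I(u)+\frac{1}{q^{2}}\|u\|_{q}^{q}+\frac{q-2}{2q}\mu(t)a(u(t),u(t)),
\]
which displays $J(u)$ as a multiple of $I(u)$ plus two manifestly nonnegative quantities, since $q>2$ by \eqref{1.2} and $\mu(t)\geq\mu_{0}>0$ by $({\bf{H2}})$.

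First I would fix an arbitrary $u\in\NN$, so that $I(u)=0$. Substituting into the identity above and discarding the nonnegative term $\frac{1}{q^{2}}\|u\|_{q}^{q}$ together with $\mu(t)\geq\mu_{0}$ gives
\[
J(u)=\frac{1}{q^{2}}\|u\|_{q}^{q}+\frac{q-2}{2q}\mu(t)a(u(t),u(t))\ \geq\ \frac{q-2}{2q}\,\mu_{0}\,a(u,u).
\]
Because $I(u)=0$, part (iii) of Corollary \ref{corollary2.1} applies and yields $a(u,u)\geq r_{*}^{2}$, with $r_{*}$ the constant from Lemma \ref{lemma2.5}. Combining the two displays,
\[
J(u)\ \geq\ \frac{q-2}{2q}\,\mu_{0}\,r_{*}^{2}=M,
\]
and since $u\in\NN$ was arbitrary, taking the infimum in \eqref{2.12} gives $d=\inf_{u\in\NN}J(u)\geq M$. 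One should note in passing that $\NN\neq\emptyset$, hence $d<+\infty$ and the infimum is meaningful: by Lemma \ref{lemma2.2}, for any $u\in H_{0}^{1}(\Omega)\backslash\{0\}$ the point $\lambda_{*}u$ lies on $\NN$.

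Finally, the strict positivity of $M$ is immediate: $q>2$ by \eqref{1.2}, $\mu_{0}>0$ by $({\bf{H2}})$, and $r_{*}>0$ by Lemma \ref{lemma2.5}, so $M=\frac{q-2}{2q}\mu_{0}r_{*}^{2}>0$, which completes the proof. There is essentially no analytic obstacle here: all the hard work — the logarithmic Sobolev-type estimate and the supremum defining $r_{*}$ — has already been carried out in Lemmas \ref{lemma2.4} and \ref{lemma2.5}. The only point that requires a little care is to invoke the correct alternative, case (iii) rather than (i) or (ii), of Corollary \ref{corollary2.1} for elements of the Nehari manifold, and to use the decomposition \eqref{2.8} rather than the raw definition \eqref{2.6} of the energy so that $I(u)$ enters with a favorable sign.
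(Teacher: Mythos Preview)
Your proof is correct and follows essentially the same route as the paper: use the decomposition \eqref{2.8} with $I(u)=0$ on $\NN$, then invoke Corollary \ref{corollary2.1}(iii) together with $\mu(t)\geq\mu_{0}$ to bound $J(u)$ below by $M$ and pass to the infimum. Your write-up is in fact slightly more careful than the paper's, since you explicitly note $\NN\neq\emptyset$ and justify $M>0$.
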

\begin{proof}
According to \eqref{2.12}, we know $u\in \NN$ and $I(u)=0$. By the definition of $d$ and the Corollary \ref{corollary2.1}, we have
\begin{equation}\label{eq2.15}
  d\geq J(u)=\frac{1}{q^{2}}\|u\|_{q}^{q}+\frac{q-2}{2q}\mu(t)a(u(t),u(t))\geq \left( \frac{q-2}{2q} \right)\mu_{0}r_{*}^{2}, ~\text{for any}~ u\in \NN.
\end{equation}
\end{proof}

\begin{lem}\label{lemma2.1}
Assume that $({\bf{H2}})$ and $({\bf{H3}})$ hold, then $E(t)$ is a non-increasing function and
\begin{equation}\label{2.7}
  \frac{d}{dt}E(t)=\frac{1}{2}\mu'(t)a(u,u)-\int_{\Omega} g(u_{t}) u_{t}\textrm{d}x.
\end{equation}
\end{lem}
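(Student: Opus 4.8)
\textbf{Proof proposal for Lemma \ref{lemma2.1}.}

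The plan is to differentiate the total energy $E(t)$ given in \eqref{2.6} with respect to $t$ and then use the equation \eqref{1.1} to cancel most of the terms, leaving exactly the two expressions on the right-hand side of \eqref{2.7}; the sign claim then follows immediately from $({\bf H2})$ and Remark \ref{remark2.2}. First I would compute $\frac{d}{dt}E(t)$ term by term. Differentiating $\frac12\|u_t\|_2^2$ gives $\int_\Omega u_{tt}u_t\,\textrm{d}x$. Differentiating $\frac12\mu(t)a(u,u)$ gives $\frac12\mu'(t)a(u,u)+\mu(t)a(u_t,u)$ by symmetry and bilinearity of $a(\cdot,\cdot)$ (Remark \ref{remark2.3}). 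Differentiating $-\frac1q\int_\Omega|u|^q\log|u|\,\textrm{d}x+\frac1{q^2}\|u\|_q^q$ requires the pointwise identity $\frac{d}{ds}\big(\tfrac1q|s|^q\log|s|-\tfrac1{q^2}|s|^q\big)=|s|^{q-2}s\log|s|$, so this pair contributes $-\int_\Omega|u|^{q-2}u\log|u|\,u_t\,\textrm{d}x$.

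Next I would collect these, so that
\begin{equation*}
\frac{d}{dt}E(t)=\int_\Omega u_{tt}u_t\,\textrm{d}x+\mu(t)a(u_t,u)+\frac12\mu'(t)a(u,u)-\int_\Omega|u|^{q-2}u\log|u|\,u_t\,\textrm{d}x.
\end{equation*}
Then I would use the weak formulation of \eqref{1.1} tested with $\phi=u_t$ (in the distributional/differentiated form), namely $\int_\Omega u_{tt}u_t\,\textrm{d}x+\mu(t)a(u,u_t)+\int_\Omega g(u_t)u_t\,\textrm{d}x=\int_\Omega|u|^{q-2}u\log|u|\,u_t\,\textrm{d}x$, to substitute $\int_\Omega u_{tt}u_t\,\textrm{d}x+\mu(t)a(u,u_t)=\int_\Omega|u|^{q-2}u\log|u|\,u_t\,\textrm{d}x-\int_\Omega g(u_t)u_t\,\textrm{d}x$. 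After cancellation of the logarithmic source terms and the $\mu(t)a(u,u_t)$ terms, we are left with \eqref{2.7}. Finally, $\mu'(t)<0$ and $a(u,u)\ge a_0\|\nabla u\|_2^2\ge 0$ by \eqref{2.5}, while $g(u_t)u_t\ge 0$ by Remark \ref{remark2.2}; hence $\frac{d}{dt}E(t)\le 0$, so $E(t)$ is non-increasing.

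The main obstacle is rigor rather than computation: since a weak solution only has $u_t\in L^{p+2}\cap L^2([0,T];H_0^1)$ and $u_{tt}\in H^{-1}$, the formal computation above and the choice $\phi=u_t$ are not literally justified pointwise in $t$. The clean way to handle this is to carry out the identity first at the level of the Galerkin approximations $u_m$ (where everything is smooth in $t$ and $\phi=u_{m,t}$ is admissible), obtaining $\frac{d}{dt}E(u_m,u_{m,t})=\frac12\mu'(t)a(u_m,u_m)-\int_\Omega g(u_{m,t})u_{m,t}\,\textrm{d}x$, integrate in time, and then pass to the limit using the convergences established in the local existence proof (Theorem \ref{theorem3.1}) together with weak lower semicontinuity of the norms and Fatou/monotonicity for the damping term; alternatively one invokes the standard energy identity for hyperbolic equations with this regularity. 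I would state that this step uses the regularity in Definition \ref{definition2.1} and refer back to the construction in Section 3, rather than reproving it here.
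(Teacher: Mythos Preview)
Your proposal is correct and follows essentially the same route as the paper: differentiate $E(t)$ term by term, use the equation \eqref{1.1} (the paper groups the terms as $\int_\Omega u_t[u_{tt}-\mu(t)\,\mathrm{div}(A(x)\nabla u)-|u|^{q-2}u\log|u|]\,\textrm{d}x$ and substitutes $-g(u_t)$ directly, whereas you phrase it as testing the weak formulation with $\phi=u_t$), and then invoke $({\bf H2})$ and Remark~\ref{remark2.2} for the sign. The paper omits the regularity discussion you raise and simply labels the computation ``direct''; your remark about justifying the identity via the Galerkin level is a legitimate refinement but not something the paper addresses.
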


\begin{proof}
By direct computation,
\begin{eqnarray*}
\frac{\textrm{d}}{\textrm{d}t}E(t)
&=&\int_{\Omega}u_{tt}u_{t} \textrm{d}x +\frac{1}{2}\mu'(t)a(u,u)+\mu(t)\int_{\Omega }A(x)\nabla u  \cdot \nabla u_{t} \textrm{d}x\\
&&-\int_{\Omega}|u|^{q-2}u\log|u|u_{t}\textrm{d}x\\
&=&\int_{\Omega}u_{t}\left[u_{tt}-\mu(t) div(A(x) \nabla u)-|u|^{q-2}u\log|u|\right] \textrm{d}x+ \frac{1}{2}\mu'(t)a(u,u)\\
&=&\frac{1}{2}\mu'(t)a(u,u)-\int_{\Omega} g(u_{t}) u_{t} \textrm{d}x.
\end{eqnarray*}
It follows from ({\bf{H2}}) and Remark \ref{remark2.2} that $\frac{d}{dt}E(t) < 0$.
\end{proof}

\begin{rem}\label{remark2.1}
Integrating \eqref{2.7} from $0$ to $t$, we have
\begin{equation*}
  E(t)+\int_{0}^{t}\int_{\Omega} g(u_{t}) u_{t} \textrm{d}x\textrm{d}s - \frac{1}{2} \int_{0}^{t} \mu'(t) a(u,u) \textrm{d}s =E(0).
\end{equation*}
\end{rem}


Finally, we introduce the following lemma which plays  an essential  role in the proof of the energy decay.

\begin{lem}[\cite{NM1978}]\label{lemma4.1}
Let $\phi(t)$ be a non-increasing and nonnegative function on $[0,T], T>1$, such that
\begin{equation*}
  \phi(t)^{1+r}\leq w_{0}(\phi(t)-\phi(t+1)), \mbox{ on } [0,T],
\end{equation*}
where $w_{0}$ is a positive constant and $r$ is a nonnegative constant. Then we have

(i): if $r>0$, then
\begin{equation*}
  \phi(t)\leq (\phi(0)^{-r}+w_{0}^{-1}r[t-1]^{+})^{-\frac{1}{r}}, \mbox{ on } [0,T];
\end{equation*}

(i): if $r=0$, then
\begin{equation*}
  \phi(t)\leq \phi(0)e^{-w_{1}[t-1]^{+}}, \mbox{ on } [0,T];
\end{equation*}
where $w_{1}=\log \left( \frac{w_{0}}{w_{0}-1} \right)$,  $w_{0}>1$.
\end{lem}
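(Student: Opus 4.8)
The final statement to prove is Lemma~\ref{lemma4.1}, the classical integral/difference inequality of Nakao (\cite{NM1978}).

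\medskip

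The plan is to derive the two decay estimates directly from the hypothesis $\phi(t)^{1+r}\le w_0(\phi(t)-\phi(t+1))$ by iterating it over the integer lattice. First I would treat the case $r=0$. Here the hypothesis reads $\phi(t)\le w_0(\phi(t)-\phi(t+1))$, which rearranges to $\phi(t+1)\le \frac{w_0-1}{w_0}\phi(t)$; note $w_0>1$ is forced since otherwise $\phi(t+1)\le 0$ would contradict nonnegativity unless $\phi\equiv 0$. Writing $\theta=\frac{w_0-1}{w_0}\in(0,1)$ and iterating gives $\phi(t)\le \theta^{\lfloor t\rfloor}\phi(t-\lfloor t\rfloor)\le \theta^{\lfloor t\rfloor}\phi(0)$ using monotonicity of $\phi$. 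Since $\lfloor t\rfloor \ge [t-1]^{+}$ and $\theta<1$, we get $\phi(t)\le \phi(0)\theta^{[t-1]^{+}}=\phi(0)e^{-w_1[t-1]^{+}}$ with $w_1=-\log\theta=\log\frac{w_0}{w_0-1}$, which is exactly the claimed bound.

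\medskip

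For the case $r>0$ I would work with the auxiliary sequence $a_n=\phi(n)^{-r}$ (or directly with $\phi(n)$) and show it grows at least linearly. From the hypothesis applied at an integer $t=n$, $\phi(n)^{1+r}\le w_0(\phi(n)-\phi(n+1))$, i.e. $\phi(n+1)\le \phi(n)-w_0^{-1}\phi(n)^{1+r}$. The key elementary step is: if $0\le b\le a - w_0^{-1}a^{1+r}$ with $a>0$, then $b^{-r}\ge a^{-r}+w_0^{-1}r$. One proves this by the mean value theorem applied to $s\mapsto s^{-r}$ on $[b,a]$ (or equivalently by the convexity inequality $(1-x)^{-r}\ge 1+rx$ for $x\in[0,1)$ with $x=w_0^{-1}a^{r}$, after noting $w_0^{-1}a^r<1$ which follows from $b\ge 0$). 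Hence $\phi(n+1)^{-r}\ge \phi(n)^{-r}+w_0^{-1}r$, and by induction $\phi(n)^{-r}\ge \phi(0)^{-r}+w_0^{-1}r\,n$, that is $\phi(n)\le (\phi(0)^{-r}+w_0^{-1}r\,n)^{-1/r}$. Finally, for general $t$ pick $n=\lfloor t\rfloor$ or, to match the $[t-1]^{+}$ exponent, use $n\ge [t-1]^{+}$ together with monotonicity $\phi(t)\le \phi(n)$ to conclude $\phi(t)\le(\phi(0)^{-r}+w_0^{-1}r\,[t-1]^{+})^{-1/r}$.

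\medskip

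The only real obstacle is the elementary lemma $b\le a-w_0^{-1}a^{1+r}\Rightarrow b^{-r}\ge a^{-r}+w_0^{-1}r$; once that is in hand, both statements follow by a one-line induction and the comparison $\lfloor t\rfloor\ge[t-1]^{+}$. One subtlety to handle carefully is the degenerate situation $\phi(n)=0$ for some $n$: by monotonicity and nonnegativity this forces $\phi(t)=0$ for all $t\ge n$, so all bounds hold trivially there, and on the set where $\phi$ is strictly positive the divisions above are legitimate. I would also remark that the inequality is only assumed on $[0,T]$, so the induction runs only up to $n\le T-1$ and the conclusions are stated on $[0,T]$, which is consistent with how the lemma will be applied (with $T$ the possibly large existence time from Theorem~\ref{theorem3.2}).
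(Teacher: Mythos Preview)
Your argument is correct: the $r=0$ case follows from the contraction $\phi(t+1)\le\frac{w_0-1}{w_0}\phi(t)$ and iteration, and for $r>0$ the convexity inequality $(1-x)^{-r}\ge 1+rx$ turns $\phi(n+1)\le\phi(n)-w_0^{-1}\phi(n)^{1+r}$ into the telescoping bound $\phi(n+1)^{-r}\ge\phi(n)^{-r}+w_0^{-1}r$, after which monotonicity and $\lfloor t\rfloor\ge[t-1]^{+}$ give the stated estimate. Your treatment of the degenerate case $\phi(n)=0$ and of the constraint $w_0^{-1}\phi(n)^r\le 1$ (forced by $\phi(n+1)\ge 0$) is also fine.

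However, note that the paper does \emph{not} supply its own proof of this lemma: it is merely quoted from Nakao~\cite{NM1978} and invoked as a tool in the proof of Theorem~\ref{theorem4.2}. So there is no ``paper's proof'' to compare against; your write-up simply fills in the argument that the authors delegate to the reference.
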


\section{Local existence}
In this section, we always use $C$ to denote the positive constant, but it may take different values in different places.

In order to prove the Theorem \ref{theorem3.1}, as  \cite{GS2006}, for a given $T>0$, we introduce the space $\mathcal{M}=C([0,T];H_{0}^{1}(\Omega))\cap C^{1}([0,T]; L^{2}(\Omega))$ with the norm
\begin{equation*}
  \|u\|_{\mathcal{M}}^{2}=\max_{t\in [0,T]}\left( \|\nabla u\|_{2}^{2}+\|u_{t}\|_{2}^{2}  \right),
\end{equation*}
and first prove the following lemma.
\begin{lem} \label{lemma3.1}
For every $T>0, u\in \mathcal{M}$, $(u_{0},u_{1})\in H_{0}^{1}(\Omega)\times L^{2}(\Omega)$ and $c_{1}>\frac{1}{p+2}$, then there exists a unique function $v\in \mathcal{M}\cap C^{2}([0,T]; H^{-1}(\Omega))$ such that $v_{t}\in L^{p+2}([0,T],\Omega)\cap L^{2}([0,T]; H_{0}^{1}(\Omega))$ which solves the problem
\begin{equation}\label{3.2}
\left\{\begin{array}{llll}
&v_{tt}-\mu(t) Lv+g(v_{t})=|u|^{q-2}u\log|u|,& \mbox{ in } \Omega \times [0,T],\\
&v(x,t)=0,  &\mbox{ in } \partial\Omega \times [0,T],\\
&v(x,0)=u_{0}(x), v_{t}(x,0)=u_{1}(x),  &\mbox{ in } \Omega.&
\end{array}\right.
\end{equation}
\end{lem}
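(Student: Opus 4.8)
The plan is to solve the linearized problem \eqref{3.2} by the standard Galerkin method, treating the right-hand side $f := |u|^{q-2}u\log|u|$ as a fixed source term in $L^{2}([0,T];L^{2}(\Omega))$ (which it is, since $u\in\mathcal{M}\hookrightarrow C([0,T];L^{q}(\Omega))$ by the Sobolev embedding \eqref{1.2} and the logarithm grows slower than any power, so $|u|^{q-1}\log|u|$ is controlled by $|u|^{q-1+\epsilon}+C$ with $q-1+\epsilon$ still subcritical). First I would fix a basis $\{w_{j}\}_{j\ge1}$ of $H_{0}^{1}(\Omega)$, orthonormal in $L^{2}(\Omega)$ and orthogonal with respect to the bilinear form $a(\cdot,\cdot)$, and seek approximate solutions $v^{m}(t)=\sum_{j=1}^{m}\xi_{j}^{m}(t)w_{j}$ satisfying the finite-dimensional system
\begin{equation*}
\int_{\Omega} v^{m}_{tt}w_{j}\,\textrm{d}x+\mu(t)a(v^{m},w_{j})+\int_{\Omega}g(v^{m}_{t})w_{j}\,\textrm{d}x=\int_{\Omega}f\,w_{j}\,\textrm{d}x,\qquad 1\le j\le m,
\end{equation*}
with $v^{m}(0)$ and $v^{m}_{t}(0)$ the projections of $u_{0},u_{1}$ onto $\mathrm{span}\{w_{1},\dots,w_{m}\}$. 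Since $g$ is $C^{1}$ and nondecreasing, this is a locally solvable ODE system, and the a priori estimates below will show the solution is global on $[0,T]$.

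Next I would derive the basic energy estimate: multiply the $j$-th equation by $(\xi_{j}^{m})'(t)$ and sum over $j$ to get
\begin{equation*}
\frac{1}{2}\frac{\textrm{d}}{\textrm{d}t}\Big(\|v^{m}_{t}\|_{2}^{2}+\mu(t)a(v^{m},v^{m})\Big)+\int_{\Omega}g(v^{m}_{t})v^{m}_{t}\,\textrm{d}x=\frac{1}{2}\mu'(t)a(v^{m},v^{m})+\int_{\Omega}f\,v^{m}_{t}\,\textrm{d}x.
\end{equation*}
Using $({\bf H2})$ to drop the $\mu'$ term (it is nonpositive), $({\bf H3})$ together with the hypothesis $c_{1}>\frac{1}{p+2}$ to bound $\int_{\Omega}g(v^{m}_{t})v^{m}_{t}\,\textrm{d}x$ from below by $c_{1}\|v^{m}_{t}\|_{p+2}^{p+2}$ minus a controllable remainder on $\{|v^{m}_{t}|\le1\}$, and Young's inequality on $\int_{\Omega}f v^{m}_{t}\,\textrm{d}x\le\frac{1}{2}\|v^{m}_{t}\|_{2}^{2}+\frac{1}{2}\|f\|_{2}^{2}$, I obtain after integrating in time and invoking Gronwall's inequality a uniform bound
\begin{equation*}
\sup_{t\in[0,T]}\Big(\|v^{m}_{t}(t)\|_{2}^{2}+\|\nabla v^{m}(t)\|_{2}^{2}\Big)+\int_{0}^{T}\|v^{m}_{t}\|_{p+2}^{p+2}\,\textrm{d}t\le C\big(T,\mu_{0},\|u_{0}\|_{H_{0}^{1}},\|u_{1}\|_{2},\|f\|_{L^{2}L^{2}}\big).
\end{equation*}
The extra regularity $v_{t}\in L^{2}([0,T];H_{0}^{1}(\Omega))$ requires a second estimate: differentiate the Galerkin system in $t$ (or equivalently test with $-L v^{m}_{t}$, using that the basis diagonalizes $a$), multiply by $(\xi^{m}_{j})''$, sum, and exploit the monotonicity $g'\ge0$ so that the damping contributes a nonnegative term $\int_{\Omega}g'(v^{m}_{t})|\nabla v^{m}_{t}|^{2}\,\textrm{d}x\ge0$; the price is needing $f_{t}\in L^{2}L^{2}$ or a weaker duality argument, which is available because $f_{t}=(q-1)|u|^{q-2}u_{t}\log|u|+|u|^{q-2}u_{t}$ is estimated through $u\in\mathcal{M}$. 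From these bounds, standard weak/weak-$\ast$ compactness extracts a subsequence converging to $v$ with $v\in L^{\infty}([0,T];H_{0}^{1}(\Omega))$, $v_{t}\in L^{\infty}([0,T];L^{2}(\Omega))\cap L^{p+2}([0,T]\times\Omega)\cap L^{2}([0,T];H_{0}^{1}(\Omega))$; the only nonlinearity in the equation is $g(v^{m}_{t})$, and passing to the limit there is handled by the classical Lions--Aubin compactness (strong $L^{2}L^{2}$ convergence of $v^{m}_{t}$, hence a.e. convergence, hence $g(v^{m}_{t})\to g(v_{t})$ a.e., combined with the uniform $L^{(p+2)/(p+1)}$ bound on $g(v^{m}_{t})$ to upgrade to weak convergence identifying the limit). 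Then $v_{tt}=\mu(t)Lv-g(v_{t})+f\in L^{2}([0,T];H^{-1}(\Omega))$ gives $v\in C^{2}([0,T];H^{-1}(\Omega))$, and the combination $v\in L^{\infty}H_{0}^{1}$, $v_{t}\in L^{\infty}L^{2}$ with $v_{tt}\in L^{2}H^{-1}$ yields by interpolation $v\in C([0,T];H_{0}^{1})\cap C^{1}([0,T];L^{2})$, so $v\in\mathcal{M}$; the initial conditions are recovered in the usual way.

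Finally, uniqueness: if $v,\bar v$ are two solutions, their difference $z=v-\bar v$ solves $z_{tt}-\mu(t)Lz+g(v_{t})-g(\bar v_{t})=0$ with zero data; testing with $z_{t}$ and using that $g$ is nondecreasing gives $\int_{\Omega}(g(v_{t})-g(\bar v_{t}))z_{t}\,\textrm{d}x=\int_{\Omega}(g(v_{t})-g(\bar v_{t}))(v_{t}-\bar v_{t})\,\textrm{d}x\ge0$, so $\frac{\textrm{d}}{\textrm{d}t}(\|z_{t}\|_{2}^{2}+\mu(t)a(z,z))\le\mu'(t)a(z,z)\le0$ and hence $z\equiv0$. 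I expect the main obstacle to be the higher-order a priori estimate needed for $v_{t}\in L^{2}([0,T];H_{0}^{1}(\Omega))$: one must carefully handle the term coming from differentiating the nonlinear damping (where only monotonicity of $g$, not boundedness of $g'$, is assumed — so $g'(v^{m}_{t})$ may be unbounded) and simultaneously control the time-derivative of the logarithmic source $f_{t}$, whose singularity where $u$ vanishes must be absorbed using that $|s|^{q-2}|\log|s||\le C(|s|^{q-2-\delta}+|s|^{q-2+\delta})$ remains integrable against $u_{t}\in L^{2}$ for suitable small $\delta$ and $q<2^{*}$.
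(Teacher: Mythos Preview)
Your overall strategy (Galerkin approximation, energy estimate, weak compactness, monotonicity of $g$ for the limit and for uniqueness) is exactly the paper's. There is, however, one genuine gap in your energy estimate, and it is precisely the place where the hypothesis $c_{1}>\tfrac{1}{p+2}$ is actually used.

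You assert that $f=|u|^{q-2}u\log|u|\in L^{2}([0,T];L^{2}(\Omega))$ and then apply the $L^{2}$--$L^{2}$ Young split $\int_{\Omega} f\,v^{m}_{t}\le \tfrac{1}{2}\|v^{m}_{t}\|_{2}^{2}+\tfrac{1}{2}\|f\|_{2}^{2}$. But under \eqref{1.2} alone this need not hold: controlling $|u|^{q-1}\log|u|$ by $|u|^{q-1+\epsilon}$ and asking that the latter lie in $L^{2}$ forces $2(q-1+\epsilon)\le 2^{*}$, i.e.\ $q\le \tfrac{2n-2}{n-2}$ for $n\ge 3$, which is strictly stronger than $q<\tfrac{2n}{n-2}$. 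So for $q$ close to $2^{*}$ your source is not in $L^{2}(\Omega)$ and the estimate collapses. The paper avoids this by splitting with the conjugate pair $\bigl(\tfrac{p+2}{p+1},\,p+2\bigr)$:
\[
\int_{\Omega}|u|^{q-2}u\log|u|\,v'_{k}\,\textrm{d}x
\;\le\;\frac{p+1}{p+2}\big\||u|^{q-1}\log|u|\big\|_{\frac{p+2}{p+1}}^{\frac{p+2}{p+1}}
+\frac{1}{p+2}\|v'_{k}\|_{p+2}^{p+2},
\]
so that the only term involving $v'_{k}$ is $\tfrac{1}{p+2}\|v'_{k}\|_{p+2}^{p+2}$, which is absorbed by the damping contribution $c_{1}\|v'_{k}\|_{p+2}^{p+2}$ coming from $({\bf H3})$ exactly because $c_{1}>\tfrac{1}{p+2}$. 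The first term is finite since $\tfrac{p+2}{p+1}(q-1+\nu)$ can be pushed below $2^{*}$ for $\nu$ small. In your write-up the hypothesis $c_{1}>\tfrac{1}{p+2}$ is never invoked; this is the step where it must appear.

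Two minor remarks. First, your uniqueness argument is cleaner than the paper's: since $\mu'\le 0$ you get $\frac{\textrm{d}}{\textrm{d}t}(\|z_{t}\|_{2}^{2}+\mu(t)a(z,z))\le 0$ directly and conclude from zero initial data, whereas the paper bounds by $C\,a(z,z)$ and invokes Gronwall. Second, the higher regularity $v_{t}\in L^{2}([0,T];H_{0}^{1}(\Omega))$ that you flag as the main obstacle is not derived in the paper's own proof either; after the basic bounds the paper simply refers to \cite{HAMO2021} for the passage to the limit. Your proposed route (differentiate in $t$, use $g'\ge 0$) is reasonable but indeed delicate, and is more than what the paper carries out explicitly.
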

\begin{proof}
For convenience, we replace $v_{t}, v_{tt}$ with  $v', v''$. In what follows, we shall use the standard Galerkin method to prove this theorem.

Let $\{w_{j}\}_{j=1}^{\infty}$ is the orthogonal complete system of eigenfunctions of $-\Delta$ in $H_{0}^{1}(\Omega)$ with $\|w_{j}\|_{2}^{2}=1$. Then, $\{w_{j}\}_{j=1}^{\infty}$ is a complete orthonormal set in $L^{2}(\Omega)$ which is also the orthogonal basis of $H_{0}^{1}(\Omega)$, and $V_{n}$ is the finite dimensional space spanned by $\{ w_{1}, w_{2},...,w_{n}\}$. Let
\begin{equation*}
  u_{0k}=\sum_{j=1}^{k}(u_{0},w_{j})w_{j}, \ \ u_{1k}=\sum_{j=1}^{k}(u_{1},w_{j})w_{j}.
\end{equation*}
Obviously, $u_{0k}, u_{1k} \in V_{k}$, $u_{0k}\rightarrow u_{0}$  in $H_{0}^{1}(\Omega)$ and $u_{1k}\rightarrow u_{1}$  in $L^{2}(\Omega)$ as $k \rightarrow +\infty$. For all $k\geq 0$, we seek $k$ functions $r_{1k}, r_{2k},\cdots r_{kk} \in C^{2}[0,T]$ such that $v_{k}(t)=\sum_{j=1}^{k}r_{jk}(t)w_{j}$ solves the following problem
\begin{equation}\label{3.4}
\left\{\begin{array}{llll}
  \int_{\Omega}\left( v_{k}''+g(v_{k}')-|u|^{q-2}u\log|u|  \right)\eta \textrm{d}x +\mu(t)a(v_{k},\eta)=0, \\
  v_{k}(0)=u_{0k}, v_{k}'(0)=u_{1k},
\end{array}\right.
\end{equation}
for every $\eta\in V_{k}, t\geq 0$. For $j=1,2,\cdots k$, taking $\eta=w_{j}$, \eqref{3.4} is reduced to a nonlinear ordinary differential with unknown $r_{jk}$. By using standard existence theory for ordinary differential equations, we know that $r_{jk}(t)\in C^{2}[0,T]$ is a unique solution of the problem \eqref{3.4} for every $j$. Thus, we obtain a solution $v_{k}(t)$ of the equation \eqref{3.4} over $[0,T]$.

Multiplying \eqref{3.4} by $r_{jk}$ and summing over $j$ from $1$ to $k$, we get
\begin{eqnarray}\label{3.5}
&&\frac{\textrm{d}}{\textrm{d}t}\left( \frac{1}{2}\|v'_{k}\|_{2}^{2}+\frac{1}{2}\mu(t)a(v_{k},v_{k}) \right)+\int_{\Omega} g(v'_{k})v'_{k} \textrm{d}x\nonumber\\
&=&\frac{1}{2}\mu'(t)a(v_{k},v_{k})+\int_{\Omega}|u|^{q-2}u\log|u|v'_{k} \textrm{d}x.
\end{eqnarray}
For $\int_{\Omega}g(v'_{k})v'_{k} \textrm{d}x $, by ({\bf{H3}}) and Remark \ref{remark2.2}, we have
\begin{eqnarray}\label{3.6}
\int_{\Omega}g(v'_{k})v'_{k} \textrm{d}x
&=& \int_{\{x\in\Omega: |v'_{k}|\leq 1\}} g(v'_{k})v'_{k} \textrm{d}x+ \int_{\{x\in\Omega: |v'_{k}|> 1\}} g(v'_{k})v'_{k} \textrm{d}x\nonumber\\
&\geq& \int_{\{x\in\Omega: |v'_{k}|> 1\}} g(v'_{k})v'_{k} \textrm{d}x\nonumber\\
&\geq& c_{1}\int_{\Omega}|v'_{k}|^{p+2} \textrm{d}x -c_{1}\int_{\{x\in\Omega: |v'_{k}|\leq 1\}}|v'_{k}|^{p+2} \textrm{d}x \nonumber\\
&\geq& c_{1} \|v'_{k}\|_{p+2}^{p+2}-c_{1}|\Omega|.
\end{eqnarray}
Substituting \eqref{3.6} into \eqref{3.5} and integrating over $(0,t) \subset (0,T)$, we obtain
\begin{eqnarray}\label{3.7}
&&\|v'_{k}\|_{2}^{2}+\mu(t)a(v_{k},v_{k}) +2c_{1} \int_{0}^{t} \|v'_{k}\|_{p+2}^{p+2} \textrm{d}s \nonumber\\
&\leq& \|u_{1k}\|_{2}^{2}+\mu(0)a(u_{0k},u_{0k})+c_{1}|\Omega|T
+\|\mu'\|_{L^{\infty}}\int_{0}^{t}a(v_{k},v_{k}) \textrm{d}s\nonumber\\
&&+ 2\int_{0}^{t}\int_{\Omega} |u|^{q-2}u\log|u| v'_{k}  \textrm{d}s.\nonumber\\
\end{eqnarray}

By using H\"older's inequality, Young's inequality and the basic inequality ($|x^{q-1}\log x|<(e(q-1))^{-1}$ for $0<x<1$; $x^{-\nu}\log x<(e\nu)^{-1}$ for any $x\geq 1, \nu>0$), we can get
\begin{eqnarray}\label{3.8}
&&2\int_{0}^{t}\int_{\Omega} |u|^{q-2}u\log|u| v'_{k} \textrm{d}x \textrm{d}s\nonumber\\
&\leq& 2\int_{0}^{t}\left\| |u|^{q-1}\log|u|\right\|_{\frac{p+2}{p+1}}\left\|v'_{k}\right\|_{p+2} \textrm{d}s\nonumber\\
&\leq& \frac{2(p+1)}{p+1}\int_{0}^{t}\left\| |u|^{q-1}\log|u| \right\|^{\frac{p+2}{p+1}}_{\frac{p+2}{p+1}} \textrm{d}s +\frac{2}{p+2}\int_{0}^{t} \| v'_{k}\|_{p+2}^{p+2} \textrm{d}s \nonumber\\
&\leq& \frac{2(p+1)}{p+2}\int_{0}^{t} \left( (eq-e)^{-\frac{p+2}{p+1}} |\Omega| +(e\nu)^{-\frac{p+2}{p+1}}B_{1}^{\frac{(p+2)(q-1-\nu)}{p+1}} \| \nabla u \|_{2}^{\frac{(p+2)(q-1-\nu)}{p+1}} \right) \textrm{d}s \nonumber\\
&&+\frac{2}{p+2}\int_{0}^{t} \| v'_{k}\|_{p+2}^{p+2} \textrm{d}s\nonumber\\
&\leq & CT(1+\|u\|_{\mathcal{M}}^{\frac{(p+2)(q-1-\nu)}{p+1}}) + \frac{2}{p+2}\int_{0}^{t} \| v'_{k}\|_{p+2}^{p+2} \textrm{d}s\nonumber\\
&\leq & C + \frac{2}{p+2}\int_{0}^{t} \| v'_{k}\|_{p+2}^{p+2} \textrm{d}s,
\end{eqnarray}
where $B_{1}$ is a positive embedding constant. Combining \eqref{3.7} and \eqref{3.8}, we have
\begin{eqnarray*}
&&\|v'_{k}\|_{2}^{2}+\mu(t)a(v_{k},v_{k}) +2(c_{1} -\frac{1}{p+2})\int_{0}^{t} \|v'_{k}\|_{p+2}^{p+2}  \textrm{d}s \nonumber\\
&\leq& C+\|u_{1k}\|_{2}^{2}+\mu(0)a(u_{0k},u_{0k})+c_{1}|\Omega|T
+\|\mu'\|_{L^{\infty}}\int_{0}^{t}a(v_{k},v_{k}) \textrm{d}s .
\end{eqnarray*}
Thus, by Gronwall's lemma, we obtain
\begin{eqnarray}\label{3.10}
\|v'_{k}\|_{2}^{2}+ a(v_{k},v_{k}) +2c_{1} \int_{0}^{t}  \|v'_{k}\|_{p+2}^{p+2}  \textrm{d}s \leq C.
\end{eqnarray}
From \eqref{2.5} and \eqref{3.10}, it follows that
\begin{eqnarray}\label{3.11}
  &\{v_{k}\} &\mbox{ is bounded in } L^{\infty}(0,T;H_{0}^{1}(\Omega)),\nonumber\\
  &\{v_{k}'\} &\mbox{ is bounded in } L^{\infty}(0,T;L^{2}(\Omega)),\nonumber\\
  & |v'_{k}|^{p}v'_{k} &\mbox{ is bounded in } L^{\infty}(0,T;L^{\frac{p+2}{p+1}}(\Omega)).
\end{eqnarray}
Therefore, by ({\bf{H3}}) and \eqref{3.11}, there exists a subsequence of $\{v_{k}\}$, which we still denote by itself, such that
\begin{eqnarray*}
  &v_{k} \rightharpoonup v  &\mbox{ weakly star in } L^{\infty}(0,T;H_{0}^{1}(\Omega)),\nonumber\\
  &v'_{k} \rightharpoonup v'  &\mbox{ weakly star in } L^{\infty}(0,T;L^{2}(\Omega)),\nonumber\\
  &v''_{k} \rightharpoonup v''  &\mbox{ weakly star in } L^{\infty}(0,T;H_{0}^{-1}(\Omega)),\nonumber\\
  &g(v'_{k}) \rightharpoonup \chi  &\mbox{ weakly star in } L^{\infty}(0,T;L^{\frac{p+2}{p+1}}(\Omega)).
\end{eqnarray*}
The rest of the proof of this lemma is the same as Proposition 3.1 in \cite{HAMO2021}, so we omit it. Therefore, up to subsequence, we may pass to the limit in \eqref{3.4} and get $\chi=g(v')$. Then the existence of a local weak solution of \eqref{3.2} is finished.

Now, we prove the uniqueness of the solution. If $v_{1}$ and $v_{2}$ are two solutions of \eqref{3.2} with the same initial data and set $w=v_{1}-v_{2}$, we have
\begin{equation}\label{3.13}
  \int_{\Omega}w'' \phi \textrm{d}x +\mu(t)a(w, \phi)+ \int_{\Omega}\left(g(v'_{1})-g(v'_{2})\right)\phi \textrm{d}x=0.
\end{equation}
Let $\phi=w'$ in \eqref{3.13} and by the fact that $g$ is monotonously nondecreasing, then
\begin{equation}\label{3.14}
  \frac{\textrm{d}}{\textrm{d}t}\left(\|w'\|_{2}^{2}+\mu(t)a(w,w)\right)\leq C a(w,w).
\end{equation}
By using Gronwall' lemma, \eqref{3.14} yields $\|w'\|_{2}^{2}=a(w,w)=0$, that is , $v_{1}=v_{2}$. The proof of the lemma is complete.
\end{proof}

\noindent\textbf{\emph{Proof of Theorem \ref{theorem3.1}.}}
Let $R^{2}=2(\| \nabla u_{0}\|_{2}^{2}+\|u_{1}\|_{2}^{2})$ and for any $T>0$, we define
\begin{equation*}
  \mathcal{W}_{T}=\{ u\in \mathcal{M}:u(0)=u_{0}, u_{t}(0)=u_{1} \mbox{ and } \|u\|_{\mathcal{M}}\leq R  \}.
\end{equation*}
By Lemma \ref{lemma3.1}, for any $u \in \mathcal{W}_{T}$, we define $v=\Phi(u)$, where $v$ is the unique solution of \eqref{3.2}. Now, we claim that for a suitable $T>0$, $\Phi$ is a contractive map with $\Phi(\mathcal{W}_{T})\subset \mathcal{W}_{T}$. Precisely, given $u \in \mathcal{W}_{T}$, the corresponding solution $v=\Phi(u)$ satisfies the following identify
\begin{eqnarray}\label{3.16}
  &&\|v_{t}\|_{2}^{2}+\mu(t)a(v,v)+2\int_{0}^{t}\int_{\Omega} g(v')v' \textrm{d}x \textrm{d}s \nonumber\\
   &=& \|u_{1}\|_{2}^{2}+\mu(0)a(u_{0},u_{0})+ 2\int_{0}^{t}\int_{\Omega}|u|^{q-2}u\log|u| v' \textrm{d}x \textrm{d}s, \ \ \mbox{for all} \ \ t\in [0,T].\nonumber\\
\end{eqnarray}
From \eqref{3.6} and \eqref{3.8}, we have
\begin{eqnarray*}\label{3.17}
  \int_{\Omega}g(v')v' \textrm{d}x
&\geq& c_{1} \|v'\|_{p+2}^{p+2}-c_{1}|\Omega|, \nonumber\\
  2\int_{0}^{t}\int_{\Omega}|u|^{q-2}u\log|u| v' \textrm{d}x \textrm{d}s &\leq& CT(1+R^{\frac{(p+2)(q-1-\nu)}{p+1}}) + \frac{2}{p+2}\int_{0}^{t} \| v'\|_{p+2}^{p+2} \textrm{d}s.\nonumber\\
\end{eqnarray*}
Then, \eqref{3.16} can be reduced to
\begin{equation*}\label{3.18}
  \|v_{t}\|_{2}^{2}+\mu(t)a(v,v) \leq R^{2}+CT(1+R^{\frac{(p+2)(q-1-\nu)}{p+1}}),
\end{equation*}
for all $t\in (0,T]$. Choosing $T$ sufficiently small, we get $\|v\|_{\mathcal{M}} \leq R$ which shows that $\Phi(\mathcal{W}_{T}) \subset \mathcal{W}_{T}$.

Next, we show that $\Phi$ is contractive in $\mathcal{W}_{T}$. That is, for any $u_{1}, u_{2} \in \mathcal{W}_{T}$, there exists $0<\delta<1$ such that
\begin{equation*}\label{3.19}
  \|\Phi(u_{1}) -\Phi(u_{2})\|_{\mathcal{M}}\leq \delta \|u_{1}-u_{2}\|_{\mathcal{M}}.
\end{equation*}
Let $u_{1}, u_{2}\in \mathcal{W}_{T}, v_{1}=\Phi(u_{1}), v_{2}=\Phi(u_{2})$ and $z=v_{1}-v_{2}$. Then, subtracting the two equations \eqref{3.2} with $u_{1}$ and $u_{2}$, we obtain
\begin{eqnarray}\label{3.20}
&&\int_{\Omega} z''\phi \textrm{d}x +\mu(t)\int_{\Omega}A(x)\nabla z \cdot \nabla \phi \textrm{d}x +\int_{\Omega}\left( g(v_{1}')-g(v_{2}') \right)\phi \textrm{d}x \nonumber\\
&=&\int_{\Omega}\left( |u_{1}|^{q-2}u_{1}\log|u_{1}|- |u_{2}|^{q-2}u_{2}\log|u_{2}| \right)\phi \textrm{d}x,
\end{eqnarray}
for all $\phi \in H^{1}_{0}(\Omega)$ and a.e. $t\in [0,T]$ with $z(x,0)=0, \ \ z'(x,0)=0$.

Taking $\phi=z'$ and integrating \eqref{3.20} over $(0,t)\subset [0,T]$, then we have
\begin{eqnarray}\label{3.21}
  \| z'\|_{2}^{2}+\mu(t)a(z,z)
  \leq 2\int_{0}^{t}\int_{\Omega}\left( |u_{1}|^{q-2}u_{1}\log|u_{1}|- |u_{2}|^{q-2}u_{2}\log|u_{2}| \right)z' \textrm{d}x \textrm{d}s.
\end{eqnarray}
Now, let's estimate the right-hand side of \eqref{3.21}. Using Lagrange's mean value theorem, there exists $\theta \in (0,1)$ such that
\begin{eqnarray}\label{3.22}
 &&\Big| |u_{1}|^{q-2}u_{1}\log|u_{1}|- |u_{2}|^{q-2}u_{2}\log|u_{2}|\Big|\nonumber\\
 &=&\Big| 1+(q-1)\log|\theta u_{1}+(1-\theta) u_{2}| \Big|\cdot \Big| \theta u_{1}+(1-\theta) u_{2} \Big|^{q-2}\cdot |u_{1}-u_{2}|\nonumber\\
 &\leq& \left| \theta u_{1}+(1-\theta) u_{2} \right|^{q-2}\cdot |u_{1}-u_{2}| +(q-1)A_{1}|u_{1}-u_{2}|\nonumber\\
&& +(q-1)\left| \theta u_{1}+(1-\theta) u_{2} \right|^{q-2+\epsilon}\cdot |u_{1}-u_{2}|\nonumber\\
 &\leq&|u_{1}+u_{2}|^{q-2}|u_{1}-u_{2}|+(q-1)A_{1}|u_{1}-u_{2}|
 +(q-1)\left| u_{1}+ u_{2} \right|^{q-2+\epsilon}\cdot |u_{1}-u_{2}|,\nonumber\\
\end{eqnarray}
where the second inequality utilizes the Lemma 2.1 in \cite{KM2020}. That is, for every $\epsilon>0$, there exists $A_{1}>0$ such that the real function $f(s)=|s|^{q-2}\log|s|~(p>2)$ satisfies $|f(s)|\leq A_{1}+|s|^{q-2+\epsilon}$.

Since $u_{1}, u_{2} \in \mathcal{W}_{T}$, using H\"older's inequality and Sobolev embedding theorem, then
\begin{eqnarray}\label{3.23}
&&\int_{\Omega}\left| |u_{1}+u_{2}|^{q-2}|u_{1}-u_{2}|\right|^{2} \textrm{d}x\nonumber\\
&\leq&\left( \int_{\Omega}|u_{1}+u_{2}|^{2(q-1)} \textrm{d}x \right)^{\frac{q-2}{q-1}} \left( \int_{\Omega}|u_{1}-u_{2}|^{2(q-1)} \textrm{d}x \right)^{\frac{1}{q-1}}\nonumber\\
&\leq&C \left( \|u_{1}\|_{2(q-1)}^{2(q-1)}+ \|u_{2}\|_{2(q-1)}^{2(q-1)} \right)^{\frac{q-2}{q-1}}\|u_{1}-u_{2}\|^{2}_{2(q-1)}\nonumber\\
&\leq&C \left( \|u_{1}\|_{\mathcal{M}}^{2(q-1)}+ \|u_{2}\|_{\mathcal{M}}^{2(q-1)} \right)^{\frac{q-2}{q-1}}\|u_{1}-u_{2}\|^{2}_{\mathcal{M}}\nonumber\\
&\leq& CR^{2(q-2)}\|u_{1}-u_{2}\|_{\mathcal{M}}^{2},
\end{eqnarray}
and
\begin{eqnarray}\label{3.24}
&&\int_{\Omega}\left| | u_{1}+ u_{2} |^{q-2+\epsilon}\cdot |u_{1}-u_{2}| \right|^{2} \textrm{d}x\nonumber\\
&\leq&\left( \int_{\Omega}|u_{1}+u_{2}|^{\frac{2(q-2+\epsilon)(q-1)}{q-2}}  \textrm{d}x \right)^{\frac{q-2}{q-1}} \left( \int_{\Omega}|u_{1}-u_{2}|^{2(q-1)} \textrm{d}x \right)^{\frac{1}{q-1}}\nonumber\\
&=&\left( \int_{\Omega}|u_{1}+u_{2}|^{2(q-1)+\frac{2\epsilon (q-1)}{q-2}}  \textrm{d}x \right)^{\frac{q-2}{q-1}}\|u_{1}-u_{2}\|_{2(q-1)}^{2}.
\end{eqnarray}

By $q<\frac{2n}{n-2}$, we can choose $\epsilon$ small enough such that $2(q-1)+\frac{2\epsilon (q-1)}{q-2}<\frac{2(n+2)}{n-2}$. So, we have
\begin{eqnarray*}
\int_{\Omega}|u_{1}+u_{2}|^{2(q-1)+\frac{2\epsilon (q-1)}{q-2}}  \textrm{d}x &\leq& \int_{\Omega}|u_{1}|^{2(q-1)+\frac{2\epsilon (q-1)}{q-2}}  \textrm{d}x+\int_{\Omega}|u_{2}|^{2(q-1)+\frac{2\epsilon (q-1)}{q-2}}  \textrm{d}x\nonumber\\
&\leq& C\left( \|\nabla u_{1}\|_{2}^{\frac{2(n+2)}{n-2}}+\|\nabla u_{2}\|_{2}^{\frac{2(n+2)}{n-2}} \right).
\end{eqnarray*}
 And \eqref{3.24} can be reduced to
\begin{eqnarray}\label{03.24}
\int_{\Omega}\left| | u_{1}+ u_{2} |^{q-2+\epsilon}\cdot |u_{1}-u_{2}| \right|^{2} \textrm{d}x
\leq CR^{\frac{2(n+2)(q-2)}{(n-2)(q-1)}}\|u_{1}-u_{2}\|_{\mathcal{M}}^{2}.
\end{eqnarray}
Thus, from \eqref{3.21}--\eqref{03.24}, we have
\begin{eqnarray*}\label{3.25}
  \| z'\|_{2}^{2}+\mu(t)a(z,z)+2\int_{0}^{t} \|\nabla z'\|_{2}^{2} \textrm{d}s
   \leq CT(1+R^{2(q-2)}+R^{\frac{2(n+2)(q-2)}{(n-2)(q-1)}})\|u_{1}-u_{2}\|_{\mathcal{M}}^{2}.
\end{eqnarray*}
If $T$ is sufficiently small, then there exists $\delta \in(0,1)$ such that
\begin{equation*}\label{3.26}
  \|\Phi(u_{1})-\Phi(u_{2})\|_{\mathcal{M}}^{2}=\|v_{1}-v_{2}\|_{\mathcal{M}}^{2}\leq \delta \|u_{1}-u_{2}\|_{\mathcal{M}}^{2}.
\end{equation*}
So, by the contraction mapping principle, there exists a unique weak solution for \eqref{1.1} in $[0,T]$.
$\hfill\qedsymbol$

\section{Global existence and energy decay}

In this section, we prove that the weak solution provided by theorem \ref{theorem3.1} can be extended to the whole interval $[0,+\infty)$ and give the accurate energy decay rates with the damping having a general growth near zero.

\begin{lem}\label{lemma2.3}
Assume that the conditions of Theorem \ref{theorem3.1} hold, $u$ is a local weak solution for problem \eqref{1.1}. If $E(0)<M$, $a(u_{0},u_{0})<r_{*}^{2}$ and $u_{1}\in L^{2}(\Omega)$, then $a(u(t),u(t))<r_{*}^{2}$ for all $t\in [0,T]$.
\end{lem}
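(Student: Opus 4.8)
The plan is to use a continuity/contradiction argument on the interval $[0,T]$, exploiting the invariance of the stable set under the flow. Since $a(u_0,u_0)<r_*^2$, by continuity of $t\mapsto a(u(t),u(t))$ (which holds because $u\in C([0,T];H_0^1(\Omega))$ and $a(\cdot,\cdot)$ is continuous on $H_0^1(\Omega)\times H_0^1(\Omega)$ by Remark \ref{remark2.3}), I would define
\[
  T_{0}=\sup\{t\in[0,T]:a(u(s),u(s))<r_{*}^{2}\ \text{for all}\ s\in[0,t]\}.
\]
By continuity $T_{0}>0$, and the goal is to show $T_{0}=T$. Suppose for contradiction that $T_{0}<T$; then $a(u(T_{0}),u(T_{0}))=r_{*}^{2}$ while $a(u(s),u(s))<r_{*}^{2}$ for $s<T_{0}$.

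The key intermediate step is an energy lower bound on $[0,T_{0})$. For $s<T_{0}$ we have $a(u(s),u(s))<r_{*}^{2}$, so Corollary \ref{corollary2.1}(i) gives $I(u(s))>0$. Plugging $I(u)>0$ into the identity \eqref{2.8} for $J$, namely $J(u)=\tfrac{1}{q}I(u)+\tfrac{1}{q^{2}}\|u\|_{q}^{q}+\tfrac{q-2}{2q}\mu(t)a(u,u)$, yields
\[
  J(u(s))\ \ge\ \frac{q-2}{2q}\,\mu(s)\,a(u(s),u(s))\ \ge\ \frac{q-2}{2q}\,\mu_{0}\,a(u(s),u(s)).
\]
On the other hand, $E(s)=\tfrac12\|u_t(s)\|_2^2+J(u(s))\ge J(u(s))$, and by Lemma \ref{lemma2.1} the energy is non-increasing, so $E(s)\le E(0)<M$. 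Combining these,
\[
  \frac{q-2}{2q}\,\mu_{0}\,a(u(s),u(s))\ \le\ J(u(s))\ \le\ E(s)\ \le\ E(0)\ <\ M\ =\ \frac{q-2}{2q}\,\mu_{0}r_{*}^{2},
\]
hence $a(u(s),u(s))<r_{*}^{2}$ with a strict, uniform gap bounded away below $r_*^2$ — more precisely $a(u(s),u(s))< \frac{2q}{(q-2)\mu_0}E(0)$ for all $s<T_0$. Letting $s\to T_{0}^{-}$ and using continuity gives $a(u(T_{0}),u(T_{0}))\le \frac{2q}{(q-2)\mu_0}E(0)<r_{*}^{2}$, contradicting $a(u(T_{0}),u(T_{0}))=r_{*}^{2}$. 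Therefore $T_{0}=T$ and $a(u(t),u(t))<r_{*}^{2}$ on all of $[0,T]$.

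I expect the main (minor) obstacle to be purely bookkeeping: making sure the continuity of $t\mapsto a(u(t),u(t))$ is justified from the regularity in Definition \ref{definition2.1}, handling the edge case where $a(u(s),u(s))$ could in principle vanish (so that $u(s)=0$ and Corollary \ref{corollary2.1} does not literally apply — but then $a(u(s),u(s))=0<r_*^2$ trivially, so the bound still holds), and confirming that the inequality $E(0)<M$ together with $E$ non-increasing forces the strict gap that closes the contradiction. The heart of the argument is entirely the decomposition \eqref{2.8} of $J$ plus Corollary \ref{corollary2.1}; no delicate estimates are needed.
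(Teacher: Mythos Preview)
Your proposal is correct and follows essentially the same approach as the paper: a continuity/contradiction argument hinging on the decomposition \eqref{2.8} of $J$, Corollary \ref{corollary2.1}, and the energy monotonicity from Lemma \ref{lemma2.1}. The paper defines the first hitting time $t_0$ and argues directly at $t_0$ (where $a(u(t_0),u(t_0))=r_*^2$ forces $J(u(t_0))\ge M$ via \eqref{eq2.15}), whereas you extract the uniform bound $a(u(s),u(s))\le \tfrac{2q}{(q-2)\mu_0}E(0)<r_*^2$ on $[0,T_0)$ and pass to the limit; these are the same idea, and your version is slightly more explicit about why the strict inequality survives in the limit.
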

\begin{proof}
If the conclusion is false, we assume that $t_{0}=\inf\{ \tau>0: a(u(\tau),u(\tau))=r_{*}^{2} \}$. Then,  $a(u(t),u(t))<r_{*}^{2}$ over $t \in [0,t_{0})$ and  by continuity $a(u(t_{0}),u(t_{0}))=r_{*}^{2}$. From \eqref{eq2.15}, we have
$$
E(t_{0})\geq J(u(t_{0}))\geq M=\left( \frac{q-2}{2q} \right)\mu_{0}r_{*}^{2},
$$
which is impossible. Since by Lemma \ref{lemma2.1}, we have
$$
E(t)\leq E(0)<M=\left( \frac{q-2}{2q} \right)\mu_{0}r_{*}^{2}, ~\text{for all} ~t\in [0,T].
$$
Then, $a(u(t),u(t))<r_{*}^{2}$ for all $t\in [0,T]$.
\end{proof}

\begin{lem}\label{lemma2.7}
By Corollary \ref{corollary2.1} and Lemma \ref{lemma2.3}, we can see that if $E(0)<M$ and $a(u_{0},u_{0})<r_{*}^{2}$ then $J(t)>0$ and $E(t)>0$ for all $t\in [0,T]$.
\end{lem}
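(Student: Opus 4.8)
The plan is to read both positivity statements off the decomposition \eqref{2.8} of the potential energy, using as the only nontrivial input the invariance of the stable ball already established in Lemma \ref{lemma2.3}. First I would apply Lemma \ref{lemma2.3}: since the hypotheses $E(0)<M$, $a(u_{0},u_{0})<r_{*}^{2}$ and $u_{1}\in L^{2}(\Omega)$ are exactly those of that lemma, it gives $a(u(t),u(t))<r_{*}^{2}$ for every $t\in[0,T]$. For those $t$ with $u(t)\neq 0$ one then has $0<a(u(t),u(t))<r_{*}^{2}$, so Corollary \ref{corollary2.1}(i) applies and yields $I(u(t))>0$ on $[0,T]$.

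Next I would insert this into the identity
\[
J(u(t))=\frac{1}{q}I(u(t))+\frac{1}{q^{2}}\|u(t)\|_{q}^{q}+\frac{q-2}{2q}\mu(t)a(u(t),u(t)),
\]
which is the second line of \eqref{2.8}. Because $q>2$ by \eqref{1.2}, $\mu(t)\geq\mu_{0}>0$ by $({\bf H2})$, and $\|u(t)\|_{q}^{q}\geq 0$, every summand on the right is nonnegative and the first is strictly positive; hence $J(u(t))>0$ on $[0,T]$. Finally, comparing \eqref{2.6} with \eqref{2.8} gives the elementary relation $E(t)=\tfrac12\|u_{t}(t)\|_{2}^{2}+J(u(t))$, and since the kinetic term is nonnegative we conclude $E(t)>0$ on $[0,T]$ as well.

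No serious obstacle is expected here: the lemma is a bookkeeping consequence of Lemma \ref{lemma2.3}, Corollary \ref{corollary2.1} and the two energy identities, and the sign count only uses $q>2$ and $\mu_{0}>0$. The one point deserving a word of care is the degenerate instant at which $u(t)=0$ — which by \eqref{2.5} and Poincar\'e's inequality forces $\nabla u(t)=0$ — where $I(u(t))=J(u(t))=0$; away from the trivial solution this does not affect the conclusion, since $E(t)=\tfrac12\|u_{t}(t)\|_{2}^{2}$ still remains positive there, and for nontrivial data in the stable set the strict positivity of $I$, $J$ and $E$ persists throughout $[0,T]$.
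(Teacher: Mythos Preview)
Your proposal is correct and follows essentially the same route as the paper's own proof: apply Lemma~\ref{lemma2.3} to get $a(u(t),u(t))<r_{*}^{2}$, invoke Corollary~\ref{corollary2.1}(i) to obtain $I(u(t))>0$, and then read off the positivity of $J$ and $E$ from the decomposition~\eqref{2.8} together with $E(t)=\tfrac12\|u_{t}\|_{2}^{2}+J(u(t))$. Your version is in fact slightly more careful, since you explicitly note the sign conditions $q>2$, $\mu(t)\geq\mu_{0}>0$ and flag the degenerate instant $u(t)=0$ (where Corollary~\ref{corollary2.1} does not directly apply), a point the paper passes over silently.
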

\begin{proof}
By Lemma \ref{lemma2.3}, we obtain $a(u(t),u(t))<r_{*}^{2}$ for all $t\in [0,T]$. And by Corollary \ref{corollary2.1}, we have $I(u(t))>0$. Then, according to the definitions of $E(t)$ and $J(u)$, we obtain $J(t)>0$ and $E(t)>0$ for all $t\in [0,T]$.
\end{proof}

\subsection{Proof of Theorem \ref{theorem3.2}}

\textbf{case 1: $q<p+2.$}

Here, we define a now functional
\begin{equation*}
  e(t)=\frac{1}{2}\|u_{t}\|_{2}^{2}+\frac{1}{2}\mu(t)a(u(t),u(t))
  +\frac{1}{q^{2}}\|u\|_{q}^{q}.
\end{equation*}
Then, by Lemma \ref{lemma2.1}, we have
\begin{eqnarray}\label{e4.1}
  \frac{\textrm{d}}{\textrm{d}t}e(t)+\int_{\Omega} g(u_{t})u_{t} \textrm{d}x
  = \frac{1}{2}\mu'(t)a(u,u)+\frac{1}{q}\int_{\Omega} |u_{t}|^{q-1} u_{t}\textrm{d}x+\int_{\Omega}|u|^{q-1}\log |u|u_{t} \textrm{d}x.
\end{eqnarray}
Now, we estimate some terms in \eqref{e4.1}. By \eqref{3.6}, we can directly obtain the following  inequalities,
\begin{equation}\label{e4.2}
  \int_{\Omega}g(u_{t})u_{t} \textrm{d}x \geq c_{1}\|u_{t}\|_{p+2}^{p+2}-c_{1}|\Omega|.
\end{equation}
By the condition $q<p+2$, let
\begin{equation*}
  \nu:=q\cdot\frac{p+1}{p+2}-q+1>0.
\end{equation*}
Then, by \eqref{3.8} and Young's inequality, we have
\begin{eqnarray}\label{e4.3}
&&\int_{\Omega} |u|^{q-2}u\log|u| u_{t}  \textrm{d}x \nonumber\\
&\leq& \frac{1}{p+2}\|u_{t}\|_{p+2}^{p+2}+\frac{p+1}{p+2} \int_{\Omega}\left| |u|^{q-2}u\log|u| \right|^{\frac{p+2}{p+1}}\textrm{d}x \nonumber\\
&\leq& \frac{1}{p+2}\|u_{t}\|_{p+2}^{p+2}+\frac{p+1}{p+2}  \int_{x\in \Omega: |u|<1}\left| |u|^{q-2}u\log|u| \right|^{\frac{p+2}{p+1}}\textrm{d}x\nonumber\\
&&+\frac{p+1}{p+2}  \int_{x\in \Omega: |u|\geq1}\left| |u|^{q-2}u\log|u| \right|^{\frac{p+2}{p+1}}\textrm{d}x \nonumber\\
&\leq& \frac{1}{p+2}\|u_{t}\|_{p+2}^{p+2}+\frac{p+1}{p+2}(eq-e)^{-\frac{p+2}{p+1}}|\Omega| +\frac{p+1}{p+2}(e\nu)^{-2}\int_{\Omega}|u|^{\frac{p+2}{p+1}(q-1+\nu)} \textrm{d}x\nonumber\\
&=&\frac{1}{p+2}\|u_{t}\|_{p+2}^{p+2}+\frac{p+1}{p+2}(eq-e)^{-\frac{p+2}{p+1}}|\Omega| +\frac{p+1}{p+2}(e\nu)^{-2}\|u\|_{q}^{q}.
\end{eqnarray}
Using Young's inequality and H\"older's inequality with $\frac{q-1}{q}+\frac{1}{q}=1$, the imbedding $L^{p+2}(\Omega)\hookrightarrow L^{q}(\Omega)$ with imbedding constant $C_{p+2}$ and $\delta>0$, we can obtain
\begin{eqnarray}\label{e4.4}
\int_{\Omega} |u_{t}|^{q-1} u_{t}\textrm{d}x
&\leq& \delta^{-\frac{1}{q-1}}\|u\|_{q}^{q}+\delta\|u_{t}\|_{q}^{q}\nonumber\\
&\leq& \delta^{-\frac{1}{q-1}}\|u\|_{q}^{q}+\delta C_{p+2}^{q}\|u_{t}\|_{p+2}^{q}\nonumber\\
&\leq& \delta^{-\frac{1}{q-1}}\|u\|_{q}^{q}+\delta C_{p+2}^{q}2^{p+1}(1+\|u_{t}\|_{p+2}^{p+2}).
\end{eqnarray}
Substituting \eqref{e4.2}--\eqref{e4.4} into \eqref{e4.1}, then
\begin{eqnarray}\label{e4.5}
  &&\frac{\textrm{d}}{\textrm{d}t}e(t)\nonumber\\
  &\leq& -c_{1}\|u_{t}\|_{p+2}^{p+2}+c_{1}|\Omega|+\frac{1}{p+2}\|u_{t}\|_{2}^{2}
  +\frac{p+1}{p+2}(eq-e)^{-\frac{p+2}{p+1}}|\Omega| \nonumber\\ &&+\frac{p+1}{p+2}(e\nu)^{-2}\int_{\Omega}|u|^{q} \textrm{d}x
  +\frac{\delta^{-\frac{1}{q-1}}}{q}\|u\|_{q}^{q}+\frac{\delta}{q} C_{p+2}^{q}2^{p+1}(1+\|u_{t}\|_{p+2}^{p+2})\nonumber\\
  &=&\left(\frac{p+1}{p+2}(e\nu)^{-2}+\frac{\delta^{-\frac{1}{q-1}}}{q}\right)\|u\|_{q}^{q}
  +\left(\frac{1}{p+2}+\frac{\delta}{q} C_{p+2}^{q}2^{p+1}-c_{1}\right)\|u_{t}\|_{p+2}^{p+2}\nonumber\\
  &&+\left( c_{1}|\Omega|+ \frac{p+1}{p+2}(eq-e)^{-\frac{p+2}{p+1}}|\Omega|+\delta C_{p+2}^{q}2^{p+1}\right).
\end{eqnarray}
Taking $\delta=\frac{q(c_{1}-\frac{1}{p+2})}{C_{p+2}^{q}2^{p+2}}>0$(see the conditions of Theorem \ref{theorem3.1}), we get
\begin{eqnarray}\label{e4.6}
  \frac{\textrm{d}}{\textrm{d}t}e(t)\leq C_{1}+C_{2}e(t).
\end{eqnarray}
And utilizing Gronwall inequality, we obtain
\begin{equation*}
  e(t)\leq (C_{3}e(0)+C_{4})e^{C_{3}t},
\end{equation*}
where $C_{i}(i=1,2,3,4)$ are positive constants. Then, as in \cite{JDE1994}, from this estimate and the continuation principle we finish the proof.

\textbf{case 2: $(u_{0},u_{1})\in W$.}

%
%

Revisit the potential energy functional $J(u)$,
\begin{equation}\label{3.27}
M>E(0)\geq E(t)\geq J(u)=\frac{1}{2}\mu(t)a(u(t),u(t))-\frac{1}{q}\int_{\Omega}|u|^{q}\log |u| dx+\frac{1}{q^{2}}\|u\|_{q}^{q}.
\end{equation}
For the logarithmic term, as in \eqref{3.8}, we choose $\mu>0$ sufficiently small such that $q+\mu<2^{*}$. Then, by direct calculation, we have
\begin{eqnarray}\label{3.28}
\int_{\Omega}|u|^{q}\log(|u|)\textrm{d}x
&=&\int_{\{x\in\Omega:|u|<1\}}|u|^{q}\log(|u|)\textrm{d}x
+\int_{\{x\in\Omega:|u|\geq 1\}}|u|^{q}\log(|u|)\textrm{d}x\nonumber\\
&\leq& e(q-1)^{-1}|\Omega|+(e\mu)^{-1}\int_{\{x\in\Omega:|u|\geq 1\}}|u|^{(q+\mu)}\textrm{d}x\nonumber\\
&\leq& e(q-1)^{-1}|\Omega|+(e\mu)^{-1}B_{1}^{(q+\mu)}\|\nabla u\|_{2}^{(q+\mu)},
\end{eqnarray}
where $B_{1}$ is the best constant of Sobolev embedding $H^{1}_{0}(\Omega)\hookrightarrow L^{q+\mu}(\Omega)$. Furthermore, it follows from Corollary \ref{corollary2.1} that $I(u)>0$. Combining ({\bf{H2}}), \eqref{2.5} and $J(u)<M$, we have
\begin{equation}\label{3.30}
  \|\nabla u\|_{2}^{2}\leq \frac{2q}{\mu_{0}a_{0}(q-2)}E(0)< \frac{2q}{\mu_{0}a_{0}(q-2)}M.
\end{equation}

Substituting \eqref{3.28}--\eqref{3.30} into \eqref{3.27}, we get
\begin{eqnarray*}\label{3.29}
&&\frac{1}{2} \|u_{t}\|_{2}^{2} +\frac{1}{2}\mu(t)a(u(t),u(t))+\frac{1}{q^{2}}\|u\|_{q}^{q}\nonumber\\
&\leq& \frac{1}{2} \|u_{t}\|_{2}^{2}+J(u)+e(q-1)^{-1}|\Omega|+(e\mu)^{-1}B_{1}^{(q+\mu)}\|\nabla u\|_{2}^{(q+\mu)}\nonumber\\
&<&M+e(q-1)^{-1}|\Omega|+(e\mu)^{-1}B_{1}^{(q+\mu)}\left( \frac{2q}{\mu_{0}a_{0}(q-2)}M\right)^{(q+\mu)}\nonumber\\
&:=&C,
\end{eqnarray*}
where the constant $C$ is independent of $t$. The continuation principle imply the global existence, i.e. $T=+\infty$.
$\hfill\qedsymbol$

\subsection{Proof of Theorem \ref{theorem4.2}}

Here, we give the energy decay rates when the damping has a general growth near zero. Note that Lemma \ref{lemma4.1} plays an essential role in this subsection.

Firstly, we claim that there exists a constant $\theta \in (0,1)$ such that
\begin{equation}\label{4.3}
  I(u)\geq \theta \mu(t)a(u,u).
\end{equation}
In fact, by Lemma \ref{lemma2.3} and Corollary \ref{corollary2.1}, we have $I(u(t))>0$ for all $t\geq 0$. Then, by Lemma \ref{lemma2.2}, there exists a constant $\lambda_{0}>1$ such that $I(\lambda_{0} u(t))=0$ and
\begin{eqnarray*}
M\leq J(\lambda_{0}u)&=&\frac{1}{q}I(\lambda_{0}u)+\frac{1}{q^{2}}\|\lambda_{0}u\|_{q}^{q}
+\frac{q-2}{2q}\mu(t)a(\lambda_{0}u(t),\lambda_{0}u(t))\nonumber\\
&=&\frac{1}{q^{2}} \lambda_{0}^{q}\|u\|_{q}^{q}+\frac{q-2}{2q}\lambda_{0}^{2}\mu(t)a(u(t),u(t))\nonumber\\
&\leq&\lambda_{0}^{q}\left( \frac{1}{q^{2}}\|u\|_{q}^{q}+\frac{q-2}{2q}\mu(t)a(u(t),u(t))  \right)\nonumber\\
&<&\lambda_{0}^{q}E(0),
\end{eqnarray*}
which means that
\begin{equation}\label{4.4}
  \lambda_{0}>\left( \frac{M}{E(0)} \right)^{\frac{1}{q}} >1.
\end{equation}
By the definition of $I(u)$, we have
\begin{eqnarray}\label{4.5}
0=I(\lambda_{0}u)&=&\mu(t)a(\lambda_{0}u(t),\lambda_{0}u(t))
-\int_{\Omega}|\lambda_{0}u|^{q}\log(|\lambda_{0}u|)\textrm{d}x\nonumber\\
&=&\lambda_{0}^{2}\mu(t)a(u,u)-\lambda_{0}^{q}\int_{\Omega}|u|^{q}\log|u| \textrm{d}x -(\lambda_{0}^{q}\log \lambda_{0})\|u\|_{q}^{q}\nonumber\\
&=&\lambda_{0}^{q}I(u)-\lambda_{0}^{q}\mu(t)a(u,u)+\lambda_{0}^{2}\mu(t)a(u,u)
-(\lambda_{0}^{q}\log \lambda_{0})\|u\|_{q}^{q}\nonumber\\
&=&\lambda_{0}^{q}I(u)-(\lambda_{0}^{q}-\lambda_{0}^{2})\mu(t)a(u,u)-
(\lambda_{0}^{q}\log \lambda_{0})\|u\|_{q}^{q}.\nonumber\\
\end{eqnarray}
Substituting \eqref{4.4} into \eqref{4.5}, we get
\begin{equation*}
  I(u)\geq (1-\lambda_{0}^{2-q})\mu(t)a(u,u).
\end{equation*}
Then the inequality \eqref{4.3} holds with $\theta=1-\lambda_{0}^{2-q}$.

Now, we prove the mainly result. By that assumption of $\beta$, we can rewrite ({\bf{H3}}) as follows:
\begin{eqnarray}\label{04.7}
  & |s|^{p+1}\leq |g(s)|\leq |s|^{\frac{1}{p+1}} &\mbox{ if } |s|\leq 1,\nonumber\\
  &c_{1}|s|^{p+1}\leq |g(s)| \leq c_{2}|s|^{p+1} &\mbox{ if }|s|>1.
\end{eqnarray}
Furthermore, by reviewing Remark \ref{remark2.1}, for all $t>0$, we define
\begin{equation}\label{4.6}
  E(t)-E(t+1)=D^{p+2}(t),
\end{equation}
where
\begin{equation*}
  D(t)^{p+2}=\int_{t}^{t+1}\int_{\Omega} g(u_{\tau}) u_{\tau} \textrm{d}x\textrm{d}\tau - \frac{1}{2} \int_{0}^{t} \mu'(\tau) a(u,u) \textrm{d}\tau >0.
\end{equation*}
From \eqref{04.7} and\eqref{4.6}, it is easy to get
\begin{eqnarray}\label{4.8}
  \int_{t}^{t+1}\int_{\Omega}|u_{t}|^{2} \textrm{d}x\textrm{d}t
  &=& \int_{t}^{t+1}\int_{\Omega}\left( |u_{t}|^{p+2}\right)^{\frac{2}{p+2}} \textrm{d}x\textrm{d}t \nonumber\\
  &\leq& \max\{c_{1}^{-\frac{2}{p+2}},1\}\int_{t}^{t+1}\int_{\Omega}\left( g(u_{t})u_{t}\right)^{\frac{2}{p+2}} \textrm{d}x\textrm{d}t \nonumber\\
    &\leq&\max\{c_{1}^{-\frac{2}{p+2}},1\}|\Omega|^{\frac{p}{p+2}}\left( \int_{t}^{t+1} \int_{\Omega} g(u_{t})u_{t} \textrm{d}x\textrm{d}t \right)^{\frac{2}{p+2}}  \nonumber\\
    &\leq& B_{2} D^{2}(t),
\end{eqnarray}
where $B_{2}=\max\{c_{1}^{-\frac{2}{p+2}},1\}|\Omega|^{\frac{p}{p+2}}>0$. Thus, there exist $t_{1}\in [t,t+\frac{1}{4}]$ and $t_{2}\in[t+\frac{3}{4},t+1]$ such that
\begin{equation}\label{4.9}
  \|u_{t}(t_{i})\|_{2}^{2}\leq 4B_{2}D^{2}(t), \  \  i=1,2.
\end{equation}
Multiplying \eqref{1.1} by $u$ and integrating over $\Omega\times[t_{1},t_{2}]$, we have
\begin{eqnarray}\label{4.10}
\int_{t_{1}}^{t_{2}} I(u) \textrm{d}t
&=&\int_{t_{1}}^{t_{2}}\|u_{t}\|_{2}^{2} \textrm{d}t +\int_{\Omega} \left(u_{t}(t_{1})u(t_{1})-u_{t}(t_{2})u(t_{2}) \right) \textrm{d}x\nonumber\\
&&-\int_{t_{1}}^{t_{2}}\int_{\Omega} g(u_{t})u \textrm{d}x\textrm{d}t.
\end{eqnarray}
Now, we estimate every term on the right hand side of \eqref{4.10}. It follows from \eqref{3.30}, \eqref{4.8} and \eqref{4.9} that
\begin{eqnarray}\label{4.11}
  &&\int_{t_{1}}^{t_{2}}\|u_{t}\|_{2}^{2} \textrm{d}t +\int_{\Omega} \left(u_{t}(t_{1})u(t_{1})-u_{t}(t_{2})u(t_{2}) \right) \textrm{d}x \nonumber\\
  &\leq& \int_{t_{1}}^{t_{2}}\|u_{t}\|_{2}^{2} \textrm{d}t +\|u_{t}(t_{1})\|_{2}\|u(t_{1})\|_{2}+\|u_{t}(t_{2})\|_{2}\|u(t_{2})\|_{2}\nonumber\\
  &\leq&B_{2}D^{2}(t)+8B_{2}\left( \frac{2q}{\mu_{0} a_{0}(q-2)}  \right)^{\frac{1}{2}}\sup_{t_{1}\leq s\leq t_{2}}E^{\frac{1}{2}}(s) D^{2}(t).
\end{eqnarray}
For the last term of \eqref{4.10}, we have
\begin{eqnarray}\label{4.12}
\int_{t_{1}}^{t_{2}}\int_{\Omega} g(u_{t})u \textrm{d}x\textrm{d}t &=&
\int_{t_{1}}^{t_{2}}\int_{|u_{t}|\leq 1} g(u_{t})u \textrm{d}x\textrm{d}t+
\int_{t_{1}}^{t_{2}}\int_{|u_{t}|>1} g(u_{t})u \textrm{d}x\textrm{d}t .\nonumber\\
\end{eqnarray}
On the one hand, by \eqref{3.30}, \eqref{04.7} and H\"older's inequality with $\frac{1}{p+2}+\frac{p+1}{p+2}=1$, we obtain
\begin{eqnarray}\label{4.13}
\int_{t_{1}}^{t_{2}}\int_{|u_{t}|>1} g(u_{t})u \textrm{d}x\textrm{d}t
&\leq&
c_{2}\int_{t_{1}}^{t_{2}}\int_{|u_{t}|>1} |u_{t}|^{p+1}u \textrm{d}x\textrm{d}t \nonumber\\
&\leq& c_{2}\int_{t_{1}}^{t_{2}} \| u \|_{p+2} \left(\int_{|u_{t}|>1} |u_{t}|^{p+2} dx\right) ^{\frac{p+1}{p+2}} \textrm{d}t \nonumber\\
&\leq &  c_{2} B_{3} \int_{t_{1}}^{t_{2}} \| \nabla u \|_{2} \left(\int_{|u_{t}|>1} |u_{t}|^{p+2} dx\right) ^{\frac{p+1}{p+2}} \textrm{d}t \nonumber\\
&\leq & c_{2} B_{3} \left(\frac{1}{c_{1}}\right)^{\frac{p+1}{p+2}}\left( \frac{2q}{\mu_{0} a_{0}(q-2)}  \right)^{\frac{1}{2}} \sup_{t_{1}\leq s\leq t_{2}}E^{\frac{1}{2}}(s) D^{p+1}(t),\nonumber\\
\end{eqnarray}
where $B_{3}$ is the best constant of Sobolev embedding $H_{0}^{1}(\Omega)\hookrightarrow L^{p+2}(\Omega)$. On the other hand, it follows from \eqref{04.7} that
\begin{eqnarray}\label{4.27}
&&\int_{t_{1}}^{t_{2}}\int_{|u_{t}|\leq 1} g(u_{t})u \textrm{d}x\textrm{d}t \nonumber\\
 &\leq&
\int_{t_{1}}^{t_{2}}  \|u\|_{2}\int_{|u_{t}|\leq 1} |g(u_{t})|^{2} \textrm{d}x \textrm{d}t \nonumber\\
&\leq& \int_{t_{1}}^{t_{2}}  \|u\|_{2}\int_{|u_{t}|\leq 1} |u_{t}g(u_{t})|^{\frac{2}{p+2}} \textrm{d}x \textrm{d}t \nonumber\\
&\leq& C_{3}\int_{t_{1}}^{t_{2}}  \|u\|_{2} \left(\int_{|u_{t}|\leq 1} |u_{t}g(u_{t})| \textrm{d}x\right)^{\frac{2}{p+2}} \textrm{d}t \nonumber\\
&\leq&  C_{3}B_{4} \left( \frac{2q}{\mu_{0} a_{0}(q-2)}  \right)^{\frac{1}{2}} \sup_{t_{1}\leq s\leq t_{2}}E^{\frac{1}{2}}(s) \left(\int_{t_{1}}^{t_{2}}  \int_{|u_{t}|\leq 1} |u_{t}g(u_{t})| \textrm{d}x \textrm{d}t \right)^{\frac{2}{p+2}}\nonumber\\
&\leq&C_{3}B_{4} \left( \frac{2q}{\mu_{0} a_{0}(q-2)}  \right)^{\frac{1}{2}} \sup_{t_{1}\leq s\leq t_{2}}E^{\frac{1}{2}}(s) D^{2}(t),
\end{eqnarray}
where $  C_{3}$ and $B_{4}$ are appropriate positive constants.

Substituting \eqref{4.11}--\eqref{4.27} into \eqref{4.10}, we obtain
\begin{eqnarray}\label{4.28}
\int_{t_{1}}^{t_{2}} I(u) \textrm{d}t
&\leq&B_{2}D^{2}(t)+(8B_{2}+C_{3}B_{4})\left( \frac{2q}{\mu_{0} a_{0}(q-2)}  \right)^{\frac{1}{2}}\sup_{t_{1}\leq s\leq t_{2}}E^{\frac{1}{2}}(s) D^{2}(t)\nonumber\\
&&+c_{2} B_{3} \left(\frac{1}{c_{1}}\right)^{\frac{p+1}{p+2}} \left( \frac{2q}{\mu_{0} a_{0}(q-2)}  \right)^{\frac{1}{2}} \sup_{t_{1}\leq s\leq t_{2}}E^{\frac{1}{2}}(s) D^{p+1}(t)\nonumber\\
&:=& B_{2}D^{2}(t)+C_{4}\sup_{t_{1}\leq s\leq t_{2}}E^{\frac{1}{2}}(s) D^{2}(t)+C_{5}\sup_{t_{1}\leq s\leq t_{2}}E^{\frac{1}{2}}(s) D^{p+1}(t).\nonumber\\
\end{eqnarray}
By \eqref{4.3} and the definitions of $E$ and $I$, we have
\begin{equation*}\label{4.16}
  \|u\|_{q}^{q}\leq B_{5} \| \nabla u\|_{2} \leq \frac{B_{5}}{\mu_{0}a_{0}}\mu(t)a(u,u) \leq \frac{B_{5}}{\mu_{0}a_{0} \theta} I(u),
\end{equation*}
where $B_{5}$ is also a embedding constant. Hence, we can deduce that
\begin{equation}\label{4.17}
  E(t)\leq \frac{1}{2} \|u_{t}\|_{2}^{2} + C_{6}I(u),
\end{equation}
where $C_{6}=\frac{1}{q}+\frac{q-2}{2q\theta}+\frac{B_{5}}{q^{2}\mu_{0}a_{0} \theta}$.

Now, we integrate \eqref{4.17} over $(t_{1},t_{2})$,
\begin{equation}\label{4.18}
  \int_{t_{1}}^{t_{2}}E(t) \textrm{d}t \leq \frac{1}{2}  \int_{t_{1}}^{t_{2}} \|u_{t}\|_{2}^{2} \textrm{d}t + C_{6} \int_{t_{1}}^{t_{2}} I(u) \textrm{d}t.
\end{equation}
It follows from integrating \eqref{2.7} over $(t,t_{2})$ that
\begin{equation*}
   E(t)= E(t_{2}) +\int_{t}^{t_{2}}\int_{\Omega} g(u_{t}) u_{t}  \textrm{d}x\textrm{d}t - \frac{1}{2} \int_{t}^{t_{2}} \mu'(t) a(u,u) \textrm{d}t .
\end{equation*}
Since $t_{2}-t_{1} \geq \frac{1}{2}$, then
\begin{equation*}
  E(t_{2}) \leq 2\int_{t_{1}}^{t_{2}} E(t) \textrm{d}t .
\end{equation*}
By \eqref{4.6}, we have
\begin{equation}\label{4.19}
  E(t)=E(t+1) +D^{p+2}(t) \leq E(t_{2})+D^{p+2}(t)
  \leq 2\int_{t_{1}}^{t_{2}} E(t) \textrm{d}t +D^{p+2}(t).
\end{equation}
Combining \eqref{4.8}, \eqref{4.28}, \eqref{4.18} and \eqref{4.19}, we have

\begin{eqnarray*}\label{04.10}
E(t) &\leq& (B_{2}+2C_{6}B_{2})D^{2}(t)+D^{p+2}(t) +2C_{4}C_{6}\sup_{t_{1}\leq s\leq t_{2}}E^{\frac{1}{2}}(s) D^{2}(t)\nonumber\\
&&+ 2C_{5}C_{6} \sup_{t_{1}\leq s\leq t_{2}}E^{\frac{1}{2}}(s) D^{p+1}(t)\nonumber\\
&\leq&(B_{2}+2C_{6}B_{2})D^{2}(t) +2C_{6}(C_{4} D^{2}(t)+C_{5}  D^{p+1}(t))E^{\frac{1}{2}}(t)
+D^{p+2}(t).\nonumber\\
\end{eqnarray*}
Using Young's inequality, we obtain
\begin{equation}\label{04.11}
  E(t)\leq C_{7}(D^{2}(t)+ D^{4}(t)+D^{p+2}(t)+D^{p+3}(t)+D^{2(p+2)}(t)  ),
\end{equation}
where $C_{7}>1$ is a positive constant. It follows from \eqref{4.6} that $E(0)\geq E(0)\geq D^{p+2}$. Then, we have
\begin{eqnarray*}\label{04.27}
  E(t)&\leq& C_{7}\Big( 1+ D^{2}(t)+D^{p }(t)+D^{p+1}(t)+D^{2(p+1)}(t) \Big)D^{2}(t)\nonumber\\
  &\leq& C_{7}\Big( 1+E(0)^{\frac{2}{p+2}}+E(0)^{\frac{p}{p+2}}+E(0)^{\frac{p+1}{p+2}}
  +E(0)^{\frac{2p+2}{p+2}}  \Big)D^{2}(t),\nonumber\\
\end{eqnarray*}
which implies that
\begin{equation*}\label{4.29}
  E(t)^{\frac{p+2}{2}} \leq C_{8}^{\frac{p+2}{2}}D^{p+2}(t)= C_{8}^{\frac{p+2}{2}} (E(t)-E(t+1)),
\end{equation*}
where $C_{8}=C_{7}\Big( 1+E(0)^{\frac{2}{p+2}}+E(0)^{\frac{p}{p+2}}+E(0)^{\frac{p+1}{p+2}}
  +E(0)^{\frac{2p+2}{p+2}}  \Big)>1$.
Hence, by Lemma \ref{lemma4.1}, we have
\begin{eqnarray*}
  &E(t)\leq \Big(E(0)^{-\frac{p}{2}}
  +k_{1}^{-1}\frac{p}{2}[t-1]^{+}\Big)^{-\frac{2}{p+2}}, &\mbox{ if } p>0;\\
  &E(t)\leq E(0)e^{-k_{2}[t-1]^{+}}, &\mbox{ if } p=0,
\end{eqnarray*}
where $k_{1}=C_{8}^{\frac{p+2}{4}}>1$, $k_{2}=\log\left( \frac{k_{1}}{k_{1}-1}  \right)$ and $E(0)>0$.
The proof of Theorem \ref{theorem4.2} is complete.
$\hfill\qedsymbol$

\section{Blow up}


\subsection{Proof of Theorem \ref{theorem5.1}}

In order to establish the blow-up result, we need to restate the properties of the energy functional $E(t)$. By $({\bf{H3'}})$ and Lemma \ref{lemma2.1}, we have
\begin{equation}\label{5.1}
  \frac{d}{\textrm{d}t}E(t)=\frac{1}{2}\mu'(t)a(u,u)-\|u_{t}\|_{p+2}^{p+2}<0.
\end{equation}

\noindent\textbf{\emph{Proof of Theorem \ref{theorem5.1}.}}
If the conclusion is false, then the weak solution $u$ can be extended to the whole interval $[0,+\infty)$. By $E(0)<0$ and \eqref{5.1}, we have $E(t)<0$ on $[0,+\infty)$. Here, we define $G(t)=-E(t)$, then we have $G(t)\geq G(0)>0$ for all $t \geq 0$.

We define a auxiliary function
\begin{equation}\label{5.2}
  Y(t)=G^{1-\alpha}(t)+\varepsilon \int_{\Omega}u_{t}u \textrm{d}x,
\end{equation}
where $\varepsilon$ is a positive constant which will be determined later,
and $\alpha$ satisfies
\begin{equation}\label{5.15}
0<\alpha=\frac{q-p-2}{q(p+1)} < \frac{q-2}{2q}\leq \frac{1}{2}.
\end{equation}
Taking the first derivative of the function $Y(t)$, we have
\begin{eqnarray}\label{5.3}
Y'(t)&=&(1-\alpha)G^{-\alpha}(t)G'(t)+\varepsilon \|u_{t}\|_{2}^{2} +\varepsilon\int_{\Omega}u u_{tt} \textrm{d}x \nonumber\\
&=&(1-\alpha)G^{-\alpha}(t)G'(t)+\varepsilon \|u_{t}\|_{2}^{2} -\varepsilon \mu(t) a(u,u) +\varepsilon\int_{\Omega} |u|^{q}\log |u| \textrm{d}x \nonumber\\
&&-\varepsilon\int_{\Omega} |u_{t}|^{p} u_{t} u \textrm{d}x.\nonumber\\
\end{eqnarray}
Next, we estimate the last term on the right hand of \eqref{5.3}. Using H\"older's inequality and Young's inequality with $\frac{1}{p+2}+\frac{p+1}{p+2}=1$, we have
\begin{eqnarray}\label{5.4}
  \varepsilon\int_{\Omega} |u_{t}|^{p} u_{t} u \textrm{d}x\leq \varepsilon \|u\|_{p+2}\|u_{t}\|_{p+2}^{p+1}\leq \varepsilon \left( \frac{\epsilon^{p+2}}{p+2}\|u\|_{p+2}^{p+2}+\frac{p+1}{p+2} \epsilon^{-\frac{p+2}{p+1}}\|u_{t}\|_{p+2}^{p+2} \right),\nonumber\\
\end{eqnarray}
where $\epsilon$ is a positive constant which will be determined later. Substituting \eqref{5.1} and \eqref{5.4} into \eqref{5.3}, we obtain
\begin{eqnarray}\label{5.5}
Y'(t)&\geq&(1-\alpha)G^{-\alpha}(t)G'(t)+\varepsilon \|u_{t}\|_{2}^{2} -\varepsilon \mu(t) a(u,u) +\varepsilon\int_{\Omega} |u|^{q}\log |u| \textrm{d}x \nonumber\\ &&-\varepsilon \left( \frac{\epsilon^{p+2}}{p+2}\|u\|_{p+2}^{p+2}+\frac{p+1}{p+2} \epsilon^{-\frac{p+2}{p+1}}\|u_{t}\|_{p+2}^{p+2} \right)\nonumber\\
&=&\left( (1-\alpha)G^{-\alpha}(t)-\varepsilon\frac{p+1}{p+2} \epsilon^{-\frac{p+2}{p+1}}\right) G'(t)+\varepsilon \|u_{t}\|_{2}^{2}  +\varepsilon\int_{\Omega} |u|^{q}\log |u| \textrm{d}x\nonumber\\
&&+\varepsilon \left( -\frac{1}{2}\mu'(t)\frac{p+1}{p+2} \epsilon^{-\frac{p+2}{p+1}}-   \mu(t)\right) a(u,u) -\varepsilon  \frac{\epsilon^{p+2}}{p+2}\|u\|_{p+2}^{p+2}.
\end{eqnarray}
Let $\epsilon^{-\frac{p+2}{p+1}}=\eta G^{-\alpha}(t)>0$. Then, substituting \eqref{2.6} into \eqref{5.5}, we have
\begin{eqnarray}\label{5.6}
Y'(t)&\geq&\left( (1-\alpha)-\varepsilon \eta\frac{p+1}{p+2} \right)G^{-\alpha}(t)G'(t)+\varepsilon(\frac{q}{2}+1)\|u_{t}\|_{2}^{2}\nonumber\\
&&+\varepsilon \left( -\frac{1}{2}\mu'(t)\frac{p+1}{p+2} \eta G^{-\alpha}(t) +\frac{q-2}{2}\mu(t)\right) a(u,u)\nonumber\\
&&+\frac{\varepsilon}{q}\|u\|_{q}^{q}+\varepsilon qG(t) -\frac{\varepsilon}{p+2}\eta^{-(p+1)}G^{\alpha(p+1)}(t) \|u\|_{p+2}^{p+2}.
\end{eqnarray}
Here, we estimate the last term of \eqref{5.6}. It follows from Young's inequality with $\frac{q-p-2}{q}+\frac{p+2}{q}=1$ and the condition $q>p+2$ that
\begin{equation}\label{5.7}
  G^{\alpha(p+1)}(t) \|u\|_{p+2}^{p+2} \leq \frac{q-p-2}{q}G^{\frac{\alpha q(p+1)}{q-p-2}}(t)+\frac{p+2}{q}\|u\|_{p+2}^{q}.
\end{equation}
By taking $\alpha=\frac{q-p-2}{q(p+1)}$ and substituting \eqref{5.7} into \eqref{5.6}, we obtain
\begin{eqnarray*}\label{5.8}
Y'(t)&\geq& \left( (1-\alpha)-\varepsilon \eta\frac{p+1}{p+2} \right)G^{-\alpha}(t)G'(t)+\varepsilon(\frac{q}{2}+1)\|u_{t}\|_{2}^{2}\nonumber\\
&&+\varepsilon \left( -\frac{1}{2}\mu'(t)\frac{p+1}{p+2} \eta G^{-\alpha}(t) +\frac{q-2}{2}\mu(t)\right) a(u,u) +\frac{\varepsilon}{q}\|u\|_{q}^{q}\nonumber\\
&&+\varepsilon qG(t) -\frac{\varepsilon}{p+2}\eta^{-(p+1)}\left( \frac{q-p-2}{q}G^{\frac{\alpha q(p+1)}{q-p-2}}(t)+\frac{p+2}{q}\|u\|_{p+2}^{q} \right)\nonumber\\
&=& \left( (1-\alpha)-\varepsilon \eta\frac{p+1}{p+2} \right)G^{-\alpha}(t)G'(t)+\varepsilon(\frac{q}{2}+1)\|u_{t}\|_{2}^{2}\nonumber\\
&&+\varepsilon \left( -\frac{1}{2}\mu'(t)\frac{p+1}{p+2} \eta G^{-\alpha}(t) +\frac{q-2}{2}\mu(t)\right) a(u,u) \nonumber\\
&&+\frac{\varepsilon}{q} \left(\|u\|_{q}^{q}- \eta^{-(p+1)}\|u\|_{p+2}^{q}\right)+\varepsilon \left( q -\frac{1}{p+2}\eta^{-(p+1)} \frac{q-p-2}{q} \right)G(t)\nonumber\\
&\geq& \left( (1-\alpha)-\varepsilon \eta\frac{p+1}{p+2} \right)G^{-\alpha}(t)G'(t)+\varepsilon(\frac{q}{2}+1)\|u_{t}\|_{2}^{2}\nonumber\\
&&+\varepsilon \left( -\frac{1}{2}\mu'(t)\frac{p+1}{p+2} \eta G^{-\alpha}(t) +\frac{q-2}{2}\mu(t)\right) a(u,u) \nonumber\\
&&+\frac{\varepsilon}{q} \left(1- \eta^{-(p+1)}B_{6}\right) \|u\|_{q}^{q}+\varepsilon \left( q -\frac{1}{p+2}\eta^{-(p+1)} \frac{q-p-2}{q} \right)G(t),\nonumber\\
\end{eqnarray*}
where $B_{6}$ is a embedding constant.

Let's make $\eta$  sufficiently large and $\varepsilon$ sufficiently small such that
\begin{eqnarray*}
&&1- \eta^{-(p+1)}B_{6} > 0,\\
&&q -\frac{1}{p+2}\eta^{-(p+1)} \frac{q-p-2}{q} > 0 ,\\
&&(1-\alpha)-\varepsilon \eta\frac{p+1}{p+2} > 0,\\
&&Y(t)=G^{1-\alpha}(t)+\varepsilon \int_{\Omega}u_{t}u \textrm{d}x >0.
\end{eqnarray*}
Moreover, due to ({\bf{H2}}) and $q>p+2$, we have
\begin{equation*}
  -\frac{1}{2}\mu'(t)\frac{p+1}{p+2} \eta G^{-\alpha}(t) +\frac{q-2}{2}\mu(t) >0.
\end{equation*}
Then, we can define
\begin{equation}\label{5.9}
  \xi_{1}=\varepsilon \min\{ \frac{q}{2}+1, \frac{q-2}{2}\mu_{0}, 1-\eta^{-(p+1)}B_{6}, q -\frac{1}{p+2}\eta^{-(p+1)} \frac{q-p-2}{q}  \},
\end{equation}
to conclude
\begin{equation*}\label{5.10}
  Y'(t)\geq  \xi_{1}\left( G(t)+\|u_{t}\|_{2}^{2}+a(u,u)+\|u\|_{q}^{q} \right)\geq 0, \mbox{ for all } t\geq 0.
\end{equation*}
Finally, we prove
\begin{equation}\label{5.11}
   Y'(t)\geq  \xi_{1}\left( G(t)+\|u_{t}\|_{2}^{2}+a(u,u)+\|u\|_{q}^{q} \right)\geq \xi_{2}Y^{\frac{1}{1-\alpha}}(t), \mbox{ for all } t\geq 0.
\end{equation}

\textbf{case 1.} If $\int_{\Omega}u u_{t} \textrm{d}x \leq 0$ for some $t\geq 0$, it is easy to get
\begin{equation*}
  Y^{\frac{1}{1-\alpha}}(t)=\left( G^{1-\alpha}(t)+\varepsilon \int_{\Omega}u u_{t} \textrm{d}x \right)^{\frac{1}{1-\alpha}}\leq G(t).
\end{equation*}
In this case, we have
\begin{equation*}
  Y'(t)\geq  \xi_{1} G(t)\geq  \xi_{1}Y^{\frac{1}{1-\alpha}}(t).
\end{equation*}
Thus, \eqref{5.11} holds for all $t\geq 0$ such that $\int_{\Omega}u u_{t} \textrm{d}x \leq 0$.

\textbf{case 2.} If $\int_{\Omega}u u_{t} \textrm{d}x > 0$ for some $t\geq 0$, then by $\alpha \in (0,\frac{1}{2})$, we have
\begin{eqnarray}\label{5.12}
  Y^{\frac{1}{1-\alpha}}(t)&=&\left( G^{1-\alpha}(t)+\varepsilon \int_{\Omega}u u_{t} \textrm{d}x \right)^{\frac{1}{1-\alpha}} \leq 2^{\frac{1}{1-\alpha}}\left( G(t)+\varepsilon^{\frac{1}{1-\alpha}}(\int_{\Omega} uu_{t} \textrm{d}x)^{\frac{1}{1-\alpha}}  \right).\nonumber\\
\end{eqnarray}
It follows from H\"older's inequality and Young's inequality with $\frac{1}{\theta_{1}}+\frac{1}{\theta_{2}}=1$ that
\begin{equation*}\label{5.13}
  (\int_{\Omega} uu_{t} \textrm{d}x)^{\frac{1}{1-\alpha}} \leq \|u\|_{2}^{\frac{1}{1-\alpha}} \|u_{t}\|_{2}^{\frac{1}{1-\alpha}} \leq C\left( \|u\|_{2}^{\frac{\theta_{1}}{1-\alpha}} +\|u_{t}\|_{2}^{\frac{\theta_{2}}{1-\alpha}} \right).
\end{equation*}
By taking $\theta_{2}=2(1-\alpha)>1, \theta_{1}=\frac{1-\alpha}{1-2\alpha}$ and Sobolev inequality, we have
\begin{equation}\label{5.14}
  (\int_{\Omega} uu_{t} \textrm{d}x)^{\frac{1}{1-\alpha}} \leq
  C\left( \|u\|_{2}^{\frac{2}{1-2\alpha}} +\|u_{t}\|_{2}^{2} \right) \leq C \left( \big(\|u\|_{q}^{q} \big)^{\frac{2}{q(1-2\alpha)}} +\|u_{t}\|_{2}^{2} \right)
\end{equation}
From \eqref{5.15}, we know $0<\frac{2}{q(1-2\alpha)}<1$. Since
\begin{equation}\label{5.16}
  \chi^{\kappa}\leq \chi+1 \leq (1+\frac{1}{\alpha_{1}})(\chi+\alpha_{1}), \mbox{ for all } \chi \geq 0, 0<\kappa\leq 1, \alpha_{1} >0,
\end{equation}
then replacing $\chi$ with $\|u\|_{q}^{q}$ and $\kappa$ with $\frac{2}{q(1-2\alpha)}$ in \eqref{5.16}, we obtain
\begin{equation}\label{5.17}
  \left(\|u\|_{q}^{q} \right)^{\frac{2}{q(1-2\alpha)}}\leq d\left( \|u\|_{q}^{q} +G(t_{1})\right)\leq d \left( \|u\|_{q}^{q}+G(t)\right),
\end{equation}
where $d=1+\frac{1}{G(t_{1})}$. In this case, \eqref{5.12}, \eqref{5.14} and \eqref{5.17} yield
\begin{eqnarray*}\label{5.18}
Y^{\frac{1}{1-\alpha}}
&\leq& 2^{\frac{1}{1-\alpha}}\left( G(t)+\varepsilon^{\frac{1}{1-\alpha}}(d\|u\|_{q}^{q} +dG(t)+\|u_{t}\|_{2}^{2})\right)\nonumber\\
&\leq&\xi_{3}\left( G(t)+\|u\|_{q}^{q}+\|u_{t}\|_{2}^{2} \right),
\end{eqnarray*}
where
\begin{equation*}
  \xi_{3}= 2^{\frac{1}{1-\alpha}}(1+\varepsilon^{\frac{1}{1-\alpha}}d).
\end{equation*}
Let $\xi=\frac{ \xi_{1}}{\xi_{3}}$, then
\begin{equation*}
  Y'(t)\geq \xi Y^{\frac{1}{1-\alpha}}(t).
\end{equation*}
Thus, \eqref{5.11} holds for all $t\geq 0$ such that $\int_{\Omega} u u_{t}>0$.

Obviously, by \textbf{case 1} and \textbf{2},  we can draw the conclusion that for all $t\geq 0$, we have
\begin{equation}\label{5.19}
  Y'(t)\geq \xi Y^{\frac{1}{1-\alpha}}(t), \ \ \mbox{for} \ \ \xi>0 \ \  \mbox{and} \ \ 0<\frac{1}{1-\alpha}<2.
\end{equation}
Hence, a simple calculation of \eqref{5.19} yields that $Y(t)$ blows up in finite time and the lifespan of solution to problem \eqref{1.1} is finite which contradicts with the assumption.
$\hfill\qedsymbol$

\subsection{Proof of Theorem \ref{theorem5.2}}

Before prove Theorem \ref{theorem5.2}, we need the following two lemmas which are crucial in the proof of this theorem. Let us start by introducing the following functional on $H_{0}^{1}(\Omega)$:
\begin{equation*}
  F(t)=\frac{1}{q}\|u(t)\|_{q}^{q}+I(u(t)).
\end{equation*}

\begin{lem}\label{lemma5.1}
If $F(t)<0$ for all $t>0$, we have that $a(u(t),u(t))>r_{*}^{2}$ for all $t>0$.
\end{lem}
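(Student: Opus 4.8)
The plan is to argue by contraposition, mirroring the structure of Lemma \ref{lemma2.3}: I want to show that if $F(t)<0$ then necessarily $I(u(t))<0$, and then invoke Corollary \ref{corollary2.1}(ii), which gives $a(u(t),u(t))>r_{*}^{2}$. The bridge between $F(t)<0$ and $I(u(t))<0$ is simply the definition $F(t)=\frac{1}{q}\|u(t)\|_q^q+I(u(t))$: since $\frac{1}{q}\|u(t)\|_q^q\geq 0$, the inequality $F(t)<0$ forces $I(u(t))<-\frac{1}{q}\|u(t)\|_q^q\leq 0$, hence $I(u(t))<0$ (strictly). In particular $u(t)\neq 0$, so $u(t)\in H_0^1(\Omega)\setminus\{0\}$ and Corollary \ref{corollary2.1}(ii) applies directly to conclude $a(u(t),u(t))>r_{*}^2$.

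First I would fix an arbitrary $t>0$ and record that $F(t)<0$ together with $\|u(t)\|_q^q\geq 0$ yields $I(u(t))\leq F(t)<0$. Then I would note that $u(t)$ cannot be the zero function (otherwise $I(u(t))=0$, contradicting $I(u(t))<0$), so the hypotheses of Corollary \ref{corollary2.1} are met. Applying part (ii) of that corollary gives $a(u(t),u(t))>r_{*}^2$. Since $t>0$ was arbitrary, the conclusion holds for all $t>0$, which is exactly the claim.

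I do not expect a genuine obstacle here — the lemma is a short structural observation and the real content has already been packaged into Corollary \ref{corollary2.1} (which in turn rests on Lemmas \ref{lemma2.4} and \ref{lemma2.5}). The only point requiring a word of care is the non-vanishing of $u(t)$: one must observe that $F(t)<0$ is impossible at a point where $u(t)=0$, so that the corollary is legitimately applicable; this is immediate but worth stating explicitly so the argument is airtight. If one wanted an even more self-contained presentation, one could instead bound $I(u(t))<0$ and then re-run the estimate from the proof of Lemma \ref{lemma2.4} (the inequality $I(u)\geq \frac{K_0^{q+\gamma}}{e\gamma}a(u,u)\big(\frac{\mu_0 e\gamma}{K_0^{q+\gamma}}-a(u,u)^{(q+\gamma-2)/2}\big)$) to see directly that $I(u(t))<0$ forces $a(u(t),u(t))>r^2(\gamma)$ for every admissible $\gamma$, and then take the supremum over $\gamma$; but invoking Corollary \ref{corollary2.1} is the cleanest route.
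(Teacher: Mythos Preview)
Your proposal is correct and follows exactly the paper's own argument: deduce $I(u(t))<0$ from $F(t)<0$ via the nonnegativity of $\frac{1}{q}\|u(t)\|_q^q$, then apply Corollary \ref{corollary2.1}(ii). The only addition you make is the explicit remark that $u(t)\neq 0$, which the paper leaves implicit.
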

\begin{proof}
We know that $I(u(t))<0$ from the condition $F(t)<0$. Then by Corollary \ref{corollary2.1}, we have that $a(u(t),u(t))>r_{*}^{2}$ for all $t>0$.
\end{proof}

\begin{lem}\label{lemma5.2}
If $a(u_{0},u_{0})>r_{*}^{2}$ and $0<E(0)<M$, we have that $F(t)<0$ for all $t>0$.
\end{lem}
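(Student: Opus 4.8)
The plan is to establish invariance of the unstable-set condition $F(t)<0$ by a continuity-and-contradiction argument, after first checking the initial inequality $F(0)<0$.

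I would begin by showing $F(0)<0$. Assume for contradiction that $F(0)\ge 0$, i.e.\ $I(u_0)\ge -\tfrac1q\|u_0\|_q^q$. Substituting this into the identity \eqref{2.8}, $J(u)=\tfrac1q I(u)+\tfrac1{q^{2}}\|u\|_q^q+\tfrac{q-2}{2q}\mu(t)a(u,u)$, the $\|u_0\|_q^q$-terms cancel, and using $({\bf{H2}})$ ($\mu(0)\ge\mu_0$) together with the hypothesis $a(u_0,u_0)>r_*^2$ one gets $J(u_0)\ge\tfrac{q-2}{2q}\mu_0\,a(u_0,u_0)>\tfrac{q-2}{2q}\mu_0 r_*^2=M$. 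Since $E(0)=\tfrac12\|u_1\|_2^2+J(u_0)\ge J(u_0)>M$, this contradicts $E(0)<M$; hence $F(0)<0$.

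For the invariance, note that $t\mapsto F(t)=\tfrac1q\|u(t)\|_q^q+I(u(t))$ is continuous on $[0,T]$ because $u\in C([0,T];H_0^1(\Omega))$ and both terms are continuous on $H_0^1(\Omega)$. If the claim were false I would set $t_0=\inf\{t>0:\,F(t)=0\}$, so that $t_0>0$, $F(t)<0$ on $[0,t_0)$ and $F(t_0)=0$. On $[0,t_0)$ we have $I(u(t))<0$ (and $u(t)\ne 0$ there, else $F(t)=0$), hence $a(u(t),u(t))>r_*^2$ by Corollary \ref{corollary2.1}(ii), and by continuity $a(u(t_0),u(t_0))\ge r_*^2$. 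At $t_0$ the relation $F(t_0)=0$, i.e.\ $I(u(t_0))=-\tfrac1q\|u(t_0)\|_q^q$, makes \eqref{2.8} collapse to $J(u(t_0))=\tfrac{q-2}{2q}\mu(t_0)a(u(t_0),u(t_0))\ge M$, so $E(t_0)\ge J(u(t_0))\ge M$. But by \eqref{5.1} the energy is strictly decreasing under $({\bf{H3'}})$, so $E(t_0)\le E(0)<M$ — a contradiction. Thus $F(t)<0$ for all $t>0$ (in fact for all $t\ge 0$).

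I expect the argument to be essentially routine; the two places needing attention are obtaining the \emph{sharp} starting bound $F(0)<0$ (not merely $I(u_0)<0$), which is exactly what the cancellation in \eqref{2.8} at the level $F=0$ provides, and passing from the strict inequality $a(u(t),u(t))>r_*^2$ on $[0,t_0)$ to the non-strict $a(u(t_0),u(t_0))\ge r_*^2$ at the endpoint — still enough since $M=\tfrac{q-2}{2q}\mu_0 r_*^2$ is attained through a non-strict inequality. One must also remember to invoke the dissipation identity in its blow-up-section form \eqref{5.1} rather than the general Lemma \ref{lemma2.1}.
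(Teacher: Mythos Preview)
Your proposal is correct and follows essentially the same route as the paper: first rule out $F(0)\ge 0$ via the identity \eqref{2.8} and the bound $a(u_0,u_0)>r_*^2$, then run a continuity-and-first-time argument to trap a contradiction with $E(t_0)\ge M>E(0)$. The only cosmetic differences are that the paper packages the step ``$F(t)<0\Rightarrow a(u,u)>r_*^2$'' as Lemma~\ref{lemma5.1} (which is just your appeal to Corollary~\ref{corollary2.1}(ii)), and that for the monotonicity $E(t_0)\le E(0)$ the general Lemma~\ref{lemma2.1} already suffices, so your insistence on the special form \eqref{5.1} is unnecessary.
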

\begin{proof}
Firstly, we prove that $F(0)<0$. If it is false, then $F(0)\geq 0$. We can obtain that
\begin{equation*}
  E(0)\geq J(0) \geq \frac{1}{q^{2}}\|u_{0}\|_{q}^{q}+\frac{q-2}{2q}\mu_{0}a(u_{0},u_{0})
  >\frac{q-2}{2q}\mu_{0}r_{*}^{2}=M,
\end{equation*}
which is wrong.

Next, we assume that the conclusion of this lemma is false. Then, by continuity, there exists $t_{0}$ such that $F(t)<0$ over $[0,t_{0})$ and $F(t_{0})=0$. By Lemma \ref{lemma5.1}, we have $a(u(t),u(t))>r_{*}^{2}$ for all $0\leq t<t_{0}$. And from the continuity of $t\mapsto a(u(t),u(t))$, we have $a(u(t_{0}),u(t_{0}))\geq r_{*}^{2}$. Thus,
\begin{equation*}
  E(t_{0})=\frac{1}{2}\|u_{t}(t_{0})\|_{2}^{2}+J(u(t_{0}))
  >\frac{q-2}{2q}\mu_{0}a(u_{0},u_{0})+\frac{1}{q}F(t_{0})
  \geq \frac{q-2}{2q}\mu_{0}r_{*}^{2}=M,
\end{equation*}
which is a contradiction.
\end{proof}

\begin{rem}\label{remark5.1}
By the Lemma \ref{lemma5.1} and Lemma \ref{lemma5.2}, we can deduce that if $a(u_{0},u_{0})>r_{*}^{2}$ and $0<E(0)<M$, then $a(u(t),u(t))>r_{*}^{2}$ for all $t>0$ and
\begin{equation*}
  F(t)=\frac{1}{q}\|u(t)\|_{q}^{q}+I(u(t))<0, ~\text{for all}~t>0,
\end{equation*}
which implies that $I(u(t))<0$ for all $t>0$.
\end{rem}

\noindent\textbf{\emph{Proof of Theorem \ref{theorem5.2}.}}
If the conclusion is false, then the weak solution $u$ can be extended to the whole interval $[0,+\infty)$.

By taking Cauchy inequality with a positive constant $\varepsilon_{1} \in (0,1)$  and \eqref{2.6}, we have
\begin{eqnarray}\label{5.20}
&&\frac{\textrm{d}}{\textrm{d}t}\int_{\Omega} u u_{t} \textrm{d}x \nonumber\\ &=&\|u_{t}\|_{2}^{2} +\int_{\Omega} u u_{tt} \textrm{d}x \nonumber\\
&=&\|u_{t}\|_{2}^{2}-\mu(t)a(u,u)-\int_{\Omega}|u_{t}|^{p}u_{t}u \textrm{d}x +\int_{\Omega}|u|^{q}\log|u| \textrm{d}x \nonumber\\
&=& \left( 1+\frac{q(1-\varepsilon_{1}) }{2} \right) \|u_{t}\|_{2}^{2} +\left( \frac{q(1-\varepsilon_{1}) }{2}-1\right) \mu(t)a(u,u) \nonumber\\
&&+\varepsilon_{1}\int_{\Omega}|u|^{q}\log|u| \textrm{d}x +\frac{1-\varepsilon_{1}}{q} \|u\|_{q}^{q} -q(1-\varepsilon_{1}) E(t)- \int_{\Omega}|u_{t}|^{p}u_{t}u \textrm{d}x.\nonumber\\
\end{eqnarray}
Now, we estimate the last term on the right hand of \eqref{5.20}. It follows from \eqref{5.4} that
\begin{equation}\label{5.21}
  \int_{\Omega} |u_{t}|^{p} u_{t} u \textrm{d}x\leq  \|u\|_{p+2}\|u_{t}\|_{p+2}^{p+1}\leq  \frac{\epsilon^{p+2}}{p+2}\|u\|_{p+2}^{p+2}+\frac{p+1}{p+2} \epsilon^{-\frac{p+2}{p+1}}\|u_{t}\|_{p+2}^{p+2} .
\end{equation}
By the fact that $q>p+2$ and $\frac{\chi^{m}}{m}$ is a convexity function for all $\chi \geq 0$ and $m>0$, we obtain
\begin{equation}\label{5.22}
  \frac{\|u\|_{p+2}^{p+2}}{p+2} \leq \theta \frac{\|u\|_{2}^{2}}{2} +(1-\theta)\frac{\|u\|_{q}^{q}}{q},
\end{equation}
where $\theta$ is a positive constant in $(0,1)$. Combining \eqref{5.21} and \eqref{5.22}, we have
\begin{equation}\label{5.23}
  \int_{\Omega} |u_{t}|^{p} u_{t} u \textrm{d}x\leq
  \epsilon^{p+2}\left( \theta \frac{\|u\|_{2}^{2}}{2} +(1-\theta)\frac{\|u\|_{q}^{q}}{q} \right)+\frac{p+1}{p+2} \epsilon^{-\frac{p+2}{p+1}}\|u_{t}\|_{p+2}^{p+2} .
\end{equation}
Substituting \eqref{5.23} into \eqref{5.20}, then
\begin{eqnarray}\label{5.24}
\frac{\textrm{d}}{\textrm{d}t}\int_{\Omega} u u_{t} \textrm{d}x
&\geq& \left( 1+\frac{q(1-\varepsilon_{1}) }{2} \right) \|u_{t}\|_{2}^{2} +\left( \frac{q(1-\varepsilon_{1}) }{2}-1\right) \mu(t)a(u,u) \nonumber\\
&&+\varepsilon_{1}\int_{\Omega}|u|^{q}\log|u| \textrm{d}x +\frac{1-\varepsilon_{1}}{q} \|u\|_{q}^{q} -q(1-\varepsilon_{1}) E(t)\nonumber\\
&&-  \epsilon^{p+2}\left( \theta \frac{\|u\|_{2}^{2}}{2} +(1-\theta)\frac{\|u\|_{q}^{q}}{q} \right)-\frac{p+1}{p+2} \epsilon^{-\frac{p+2}{p+1}}\|u_{t}\|_{p+2}^{p+2}. \nonumber\\
\end{eqnarray}
To handle the last term of \eqref{5.24},  we subtract $\frac{p+1}{p+2} \epsilon^{-\frac{p+2}{p+1}}\frac{d}{\textrm{d}t} E(t)$ from both side of above inequality. And by Remark \ref{remark5.1}, we have $I(u(t))< 0$, for all $t>0$. Then
\begin{eqnarray}\label{5.25}
&&\frac{\textrm{d}}{\textrm{d}t}\left(\int_{\Omega} u u_{t} \textrm{d}x  -\frac{p+1}{p+2} \epsilon^{-\frac{p+2}{p+1}}E(t) \right)\nonumber\\
&=&\frac{\textrm{d}}{\textrm{d}t}\int_{\Omega} u u_{t} \textrm{d}x - \frac{p+1}{p+2} \epsilon^{-\frac{p+2}{p+1}}\left( \frac{1}{2}\mu'(t)a(u,u)-\int_{\Omega} |u_{t}|^{p+2} \textrm{d}x \right)\nonumber\\
&=&\left( 1+\frac{q(1-\varepsilon_{1}) }{2} \right) \|u_{t}\|_{2}^{2} +\left( \frac{q(1-\varepsilon_{1}) }{2}-1\right) \mu(t)a(u,u) \nonumber\\
&&+\varepsilon_{1}\int_{\Omega}|u|^{q}\log|u| \textrm{d}x +\frac{1}{q}\left( 1-\varepsilon_{1} -\epsilon^{p+2}(1-\theta) \right) \|u\|_{q}^{q} -q(1-\varepsilon_{1}) E(t)\nonumber\\
&&-  \epsilon^{p+2}\theta \frac{\|u\|_{2}^{2}}{2} -\frac{p+1}{p+2} \epsilon^{-\frac{p+2}{2(p+1)}}\mu'(t)a(u,u) \nonumber\\
&\geq&\left( 1+\frac{q(1-\varepsilon_{1}) }{2} \right) \|u_{t}\|_{2}^{2} +\left( \frac{q(1-\varepsilon_{1}) }{2}-1\right) \mu(t)a(u,u) \nonumber\\
&&+\varepsilon_{1}\mu(t)a(u,u)-  \epsilon^{p+2}\theta \frac{\|u\|_{2}^{2}}{2} +\frac{1}{q}\left( 1-\varepsilon_{1} -\epsilon^{p+2}(1-\theta) \right) \|u\|_{q}^{q} \nonumber\\
&& -q(1-\varepsilon_{1}) E(t)-\frac{p+1}{p+2} \epsilon^{-\frac{p+2}{2(p+1)}}\mu'(t)a(u,u) \nonumber\\
&\geq&\left( 1+\frac{q(1-\varepsilon_{1}) }{2} \right) \|u_{t}\|_{2}^{2} +\left( \frac{q(1-\varepsilon_{1}) }{2}-1\right) \mu(t) a(u,u) \nonumber\\
&&+\left( \varepsilon_{1}a_{0}\mu_{0}-  \frac{\epsilon^{p+2}}{2}B_{7}^{2}\right) \|\nabla u\|_{2}^{2} +\frac{1}{q}\left( 1-\varepsilon_{1} -\epsilon^{p+2}(1-\theta) \right) \|u\|_{q}^{q} \nonumber\\
&& -q(1-\varepsilon_{1}) E(t)-\frac{p+1}{p+2} \epsilon^{-\frac{p+2}{2(p+1)}}\mu'(t)a(u,u),
\end{eqnarray}
where $B_{7}$ is the best constant of Sobolev embedding inequality $\|u\|_{2}^{2}\leq B_{7} \|\nabla u\|_{2}^{2}$.

Now, we take
\begin{equation*}\label{5.27}
  \epsilon=\left( \frac{2\varepsilon_{1}a_{0}\mu_{0}}{ B_{7}^{2}}  \right)^{\frac{1}{p+2}},
\end{equation*}
and
\begin{equation}\label{5.28}
  \frac{2\varepsilon_{1}a_{0}\mu_{0}}{2\varepsilon_{1}a_{0}\mu_{0}+B_{7}^{2}(1-\varepsilon_{1})} \leq \theta <1,
\end{equation}
such that the both parameters in front of $\| \nabla u\|_{2}^{2}$ and $\|u\|_{q}^{q}$ are non-negative, i.e.
\begin{eqnarray*}\label{5.26}
&&\varepsilon_{1}a_{0}\mu_{0}-  \frac{\epsilon^{p+2}}{2}B_{7}^{2}\geq 0, \nonumber\\
 &&1-\varepsilon_{1} -\epsilon^{p+2}(1-\theta)\geq 0.
\end{eqnarray*}
Then, \eqref{5.25} is reduced to
\begin{eqnarray}\label{5.29}
&&\frac{\textrm{d}}{\textrm{d}t}\left(\int_{\Omega} u u_{t} \textrm{d}x  -h_{1}(\varepsilon_{1})E(t) \right) \nonumber\\
&\geq& \left( 1+\frac{q(1-\varepsilon_{1}) }{2} \right) \|u_{t}\|_{2}^{2} +\left( \frac{q(1-\varepsilon_{1}) }{2}-1\right) a_{0}\mu_{0}\|\nabla u\|_{2}^{2}-q(1-\varepsilon_{1}) E(t) \nonumber\\
&\geq& 2\sqrt{\frac{a_{0}\mu_{0}}{B_{7}} \left( 1+\frac{q(1-\varepsilon_{1}) }{2} \right)\left( \frac{q(1-\varepsilon_{1}) }{2}-1\right)}\int_{\Omega} u u_{t} \textrm{d}x-q(1-\varepsilon_{1}) E(t) \nonumber\\
&=&h_{2}(\varepsilon_{1})\left( \int_{\Omega} u u_{t} \textrm{d}x-h_{3}(\varepsilon_{1}) E(t) \right),
\end{eqnarray}
where
\begin{eqnarray}\label{5.30}
&&h_{1}(\varepsilon_{1})=\frac{p+1}{p+2} \epsilon^{-\frac{p+2}{p+1}}=\frac{p+1}{p+2}\left(  \frac{ B_{7}^{2}}{2\varepsilon_{1}a_{0}\mu_{0}} \right)^{\frac{1}{p+1}}, \nonumber\\
&&h_{2}(\varepsilon_{1})=2\sqrt{\frac{a_{0}\mu_{0}}{B_{7}}\left( 1+\frac{q(1-\varepsilon_{1}) }{2} \right)\left( \frac{q(1-\varepsilon_{1}) }{2}-1\right)},\nonumber\\
&&h_{3}(\varepsilon_{1})=\frac{q(1-\varepsilon_{1})}{h_{2}(\varepsilon_{1})}.
\end{eqnarray}

By the definition of $h_{2}(\varepsilon_{1})$, there exists a constant $\varepsilon_{2}\in (0,1)$ such that
\begin{equation*}\label{5.31}
  h_{2}(\varepsilon_{2})=0 \ \ \mbox{and} \ \ h_{2}(\varepsilon_{1})>0 \ \ \mbox{for all } \ \ \varepsilon_{1}\in (0,\varepsilon_{2}).
\end{equation*}
Thus, we know that, as $\varepsilon_{1}\rightarrow 0^{+}$,
\begin{eqnarray}\label{5.32}
&&h_{1}(\varepsilon_{1})\rightarrow +\infty, \nonumber\\
&&h_{2}(\varepsilon_{1})\rightarrow \sqrt{\frac{a_{0}\mu_{0}(q^{2}-4)}{B_{7}}},\nonumber\\
&&h_{3}(\varepsilon_{1})\rightarrow \frac{q}{\sqrt{q^{2}-4}}.
\end{eqnarray}
And $\varepsilon_{1}\rightarrow \varepsilon_{2}^{-}$, we have
\begin{eqnarray}\label{5.33}
&&h_{1}(\varepsilon_{1})\rightarrow \frac{p+1}{p+2}\left(  \frac{ B_{7}^{2}}{2\varepsilon_{2}a_{0}\mu_{0}} \right)^{\frac{1}{p+1}}, \nonumber\\
&&h_{2}(\varepsilon_{1})\rightarrow 0,\nonumber\\
&&h_{3}(\varepsilon_{1})\rightarrow +\infty.
\end{eqnarray}
From \eqref{5.32}, \eqref{5.33} and the definitions of $h_{1}$ and $h_{3}$, there exists $\varepsilon'\in (0,\varepsilon_{2})\subset (0,1) $ such that
\begin{equation}\label{5.033}
h_{1}(\varepsilon')=h_{3}(\varepsilon').
\end{equation}
Then, \eqref{5.29} is reduced to
\begin{equation}\label{5.34}
  \frac{\textrm{d}}{\textrm{d}t}\left(\int_{\Omega} u u_{t} \textrm{d}x  -h_{1}(\varepsilon')E(t) \right)\geq h_{2}(\varepsilon')\left( \int_{\Omega} u u_{t} \textrm{d}x-h_{1}(\varepsilon') E(t) \right).
\end{equation}
Let $F(t)=\int_{\Omega} u u_{t} \textrm{d}x  -h_{1}(\varepsilon')E(t)$. By \eqref{05.19} and \eqref{5.34}, we have
\begin{equation*}
  F(0)=\int_{\Omega} u_{0}u_{1} \textrm{d}x  -h_{1}(\varepsilon')E(0)>0,
\end{equation*}
and
\begin{equation*}
  \frac{\textrm{d}}{\textrm{d}t}F(t)\geq h_{2}(\varepsilon')F(t),
\end{equation*}
which implies
\begin{equation}\label{5.35}
  F(t)\geq e^{h_{2}(\varepsilon')t}F(0), \ \ \mbox{for all } \ \ t\geq 0.
\end{equation}
By the fact that $E(t)\geq 0$ for all $t>0$, \eqref{5.35} is reduced to \begin{equation*}
  \int_{\Omega} u u_{t} \textrm{d}x \geq e^{h_{2}(\varepsilon')t}F(0), \ \ \mbox{for all } \ \ t\geq 0.
\end{equation*}
That is
\begin{eqnarray}\label{5.36}
\|u\|_{2}^{2}&=&\|u_{0}\|_{2}^{2}+2\int_{0}^{t}\int_{\Omega} u u_{\tau} dxd\tau \nonumber\\
&\geq&\|u_{0}\|_{2}^{2}+2\int_{0}^{t} e^{h_{2}(\varepsilon')\tau}F(0) d\tau \nonumber\\
&=&\|u_{0}\|_{2}^{2}+\frac{2}{h_{2}(\varepsilon')}\left( e^{h_{2}(\varepsilon')t}-1 \right)F(0),
\end{eqnarray}
which means that $\|u\|_{2}^{2}$ is growing exponentially as $t \rightarrow +\infty$.

Now, we derive the contradiction. By direct calculation, we have
\begin{eqnarray}\label{5.37}
\|u\|_{2}^{2}&=&\int_{\Omega}\left( u_{0}+\int_{0}^{t}u_{\tau} d\tau \right)^{2} \textrm{d}x \nonumber\\
&=&\|u_{0}\|_{2}^{2} +2\int_{\Omega}u_{0}\left(\int_{0}^{t}u_{\tau} d\tau \right) \textrm{d}x +\int_{\Omega}\left( \int_{0}^{t}u_{\tau} d\tau \right)^{2} \textrm{d}x \nonumber\\
&\leq& \|u_{0}\|_{2}^{2} +2\|u_{0}\|_{2} \left( \int_{\Omega}\left( \int_{0}^{t} u_{\tau} d\tau \right)^{2} \textrm{d}x \right)^{\frac{1}{2}} +\int_{\Omega}\left( \int_{0}^{t}u_{\tau} d\tau \right)^{2} \textrm{d}x \nonumber\\
&\leq& 2\|u_{0}\|_{2}^{2} + 2\int_{\Omega}\left( \int_{0}^{t}u_{\tau} d\tau \right)^{2} \textrm{d}x \nonumber\\
&\leq& 2\|u_{0}\|_{2}^{2} +2t\int_{\Omega} \int_{0}^{t} |u_{\tau}|^{2} d\tau \textrm{d}x \nonumber\\
&=&2\|u_{0}\|_{2}^{2} +2t\int_{0}^{t} \|u_{\tau}\|_{2}^{2} d\tau \nonumber\\
&\leq& 2\|u_{0}\|_{2}^{2} +2 tB_{8}\int_{0}^{t} \|u_{\tau}\|_{p+2}^{p+2} - \frac{1}{2}\mu'(\tau)a(u,u) d\tau \nonumber\\[8pt]
&=&2\|u_{0}\|_{2}^{2} +2 tB_{8}(E(0)-E(t))\nonumber\\[8pt]
&\leq&2\|u_{0}\|_{2}^{2} +2 tB_{8}E(0),
\end{eqnarray}
where $B_{8}$ is the embedding constant. \eqref{5.37} contradicts with \eqref{5.36}. Then, the weak solution of \eqref{1.1} blows up in finite time. $\hfill\qedsymbol$

\vskip 5mm

{\bf Ethics statement.} This work does not involve any active collection of human data.

\vskip 5mm

{\bf Data accessibility statement.} This work does not have any experimental data.

\vskip 5mm

{\bf Competing interests statement.} We have no competing interests.

\vskip 5mm


\section*{Acknowledgment}

The work is partially supported by  NSFC Grants (Nos. 12225103, 12071065 and 11871140)
and the National Key Research and Development Program of China (Nos. 2020YFA0713602 and 2020YFC1808301).


%


\begin{thebibliography}{00}
%
\bibitem{nosource1} K. Agre,  M.A. Rammaha,
\emph{Global solutions to boundary value problems for a nonlinear wave equation in high space dimensions.}
Differential Integral Equations \textbf{14} (2001), 1315-1331.

\bibitem{AR2006} K. Agre,  M.A. Rammaha,
\emph{Systems of nonlinear wave equations with damping and source terms.}
Differential Integral Equations \textbf{19} (2006), 1235--1270.

\bibitem{Al19} M.M. Al-Gharabli,   A. Guesmia, S.A. Messaoudi,
\emph{Existence and a general decay results for a viscoelastic plate equation with a logarithmic nonlinearity.}
Commun. Pure Appl. Anal. \textbf{18} (2019), 159--180.

\bibitem{Al20}  M.M. Al-Gharabli,  A. Guesmia, S.A. Messaoudi,
\emph{Well-posedness and asymptotic stability results for a viscoelastic plate equation with a logarithmic nonlinearity.}
Appl. Anal. \textbf{99} (2020), 50--74.


\bibitem{Barbu1} V. Barbu, N.H. Pavel,
\emph{Periodic solutions to one-dimensional wave equation with piece-wise constant coefficients.}
J. Differential Equations \textbf{132} (1996),  319--337.

\bibitem{Barbu2} V. Barbu, N.H. Pavel,
\emph{Periodic solutions to nonlinear one-dimensional wave equation with x-dependent coefficients.}
Trans. Amer. Math. Soc. \textbf{349} (1997), 2035--2048.

\bibitem{nosource2} D.D. \'Ang, A. Pham Ngoc Dinh,
\emph{Mixed problem for some semilinear wave equation with a nonhomogeneous condition.}
Nonlinear Anal. \textbf{12} (1988), 581--592.


\bibitem{JP} J. Barrow,  P. Parsons,
\emph{Inflationary models with logarithmic potentials.}
Phys. Rev. D  \textbf{52} (1995), 5576--5587.

\bibitem{log1} I. Bia\l ynicki-Birula,  J. Mycielski,
\emph{Wave equations with logarithmic nonlinearities.}
 Bull. Acad. Polon. Sci. S\'er. Sci. Math. Astronom. Phys. \textbf{23} (1975), 461--466.

\bibitem{log2} I. Bia\l ynicki-Birula,  J. Mycielski,
\emph{ Nonlinear wave mechanics.}
  Ann. Physics \textbf{100} (1976), 62--93.

\bibitem{Bu03} H. Buljan, A. Siber, M. Soljacic, T. Schwartz, M. Segev, D.N. Christodoulides,
\emph{Incoherent white light solitons in logarithmically saturable noninstantaneous nonlinear media.}
Physical Review E, \textbf{68}(2003) 036607.


\bibitem{CH} T. Cazenave,  A. Haraux,
\emph{Equations d'\'evolution avec non-lin\'earit\'e logarithmique.}
Ann. Fac. Sci. Toulouse Math. \textbf{2} (1980), 21--51.

\bibitem{dalog7} H. Chen,  S. Tian,
\emph{Initial boundary value problem for a class of semilinear pseudo-parabolic equations with logarithmic nonlinearity.}
J. Differential Equations \textbf{258} (2015), 4424--4442.

\bibitem{Ch20} Y. Chen,  R. Xu,
\emph{Global well-posedness of solutions for fourth order dispersive wave equation with nonlinear weak damping, linear strong damping and logarithmic nonlinearity.}
Nonlinear Anal. \textbf{192} (2020), 1--39.


\bibitem{D1997PROC} P. Dr\'abek,  S.I. Pohozaev,
\emph{Positive solutions for the $p$-Laplacian: application of the fibering method.}
Proc. Roy. Soc. Edinburgh Sect. A \textbf{127} (1997), 703--726.

\bibitem{KE} K. Enqvist,  J. McDonald,
\emph{ Q-balls and baryogenesis in the MSSM.}
 Phys. Lett. B \textbf{425} (1998), 309--321.

\bibitem{GS2006} F. Gazzola,  M. Squassina,
\emph{Global solutions and finite time blow up for damped semilinear wave equations.}
Ann. Inst. H. Poincar\'e C Anal. Non Lin\'eaire \textbf{23} (2006), 185--207.


\bibitem{JDE1994} V. Georgiev,  G. Todorova,
\emph{Existence of a solution of the wave equation with nonlinear damping and source terms.}
J. Differential Equations \textbf{109} (1994), 295--308.

\bibitem{nodamp1} R.T. Glassey,
\emph{Blow-up theorems for nonlinear wave equations.}
Math. Z. \textbf{132} (1973), 183--203.

\bibitem{dalog5} P. G\'orka,
 \emph{Logarithmic Klein-Gordon equation.}
  Acta Phys. Polon. B \textbf{40} (2009), 59--66.

%

\bibitem{Ha15}  T.G. Ha,
\emph{  Blow-up for wave equation with weak boundary damping and source terms.}
   Appl. Math. Lett. \textbf{49} (2015), 166--172.

\bibitem{HADCDS} T.G. Ha,
\emph{Global existence and general decay estimates for the viscoelastic equation with acoustic boundary conditions.}
Discrete Contin. Dyn. Syst. \textbf{36} (2016), 6899--6919.

\bibitem{HAMO2021}  T.G. Ha,
\emph{Global solutions and blow-up for the wave equation with variable coefficients: I. Interior supercritical source.}
Appl. Math. Optim. \textbf{84} (2021), 767--803.

\bibitem{HATAI2017} T.G. Ha,
\emph{Stabilization for the wave equation with variable coefficients and Balakrishnan-Taylor damping.}
Taiwanese J. Math. \textbf{21} (2017), 807--817.


\bibitem{HAO2022} J. Hao, F. Du,
\emph{Decay and blow-up for a viscoelastic wave equation of variable coefficients with logarithmic nonlinearity.}
J. Math. Anal. Appl. \textbf{506} (2022), 1--20.

\bibitem{HAO2019} J. Hao,  W. He,
\emph{Energy decay of variable-coefficient wave equation with nonlinear acoustic boundary conditions and source term.}
Math. Methods Appl. Sci. \textbf{42} (2019), 2109--2123.

\bibitem{hu2019} Q. Hu, H. Zhang, G. Liu,
 \emph{Asymptotic behavior for a class of logarithmic wave equations with linear damping.}
 Appl. Math. Optim. \textbf{79} (2019), 131--144.


\bibitem{KM2020} M. Kafini, S. Messaoudi,
\emph{Local existence and blow up of solutions to a logarithmic nonlinear wave equation with delay.}
Appl. Anal. \textbf{99} (2020), 530--547.

\bibitem{Kr00} W. Krolikowski, D. Edmundson, O. Bang,
\emph{Unified model for partially coherent solitons in logarithmically nonlinear media.}
Phys. Rev. E 61 (2000) 3122--3126.

\bibitem{beijing1} I. Lasiecka,
\emph{Mathematical Control Theory of Coupled PDE’s.}
CBMS-SIAM Lecture Notes. SIAM, Philadelphia 2002.

\bibitem{nodamp2} H.A. Levine,
\emph{Instability and nonexistence of global solutions to nonlinear wave equations of the form $\mathcal{P}u_{tt}=-Au+\mathcal{F}(u)$.}
Trans. Amer. Math. Soc. \textbf{192} (1974), 1--21.

\bibitem{Levine1}  H.A. Levine,
 \emph{Some additional remarks on the nonexistence of global solutions to nonlinear wave equation.}
  SIAM J. Math. Anal. \textbf{5} (1974), 138--146.

\bibitem{La}  A. Linde,
\emph{ Strings, textures, inflation and spectrum bending.}
  Phys. Lett. B \textbf{284} (1992), 215--222.



\bibitem{2018ma} L. Ma, Z. Fang,
\emph{Energy decay estimates and infinite blow-up phenomena for a strongly damped semilinear wave equation with logarithmic nonlinear source.}
Math. Methods Appl. Sci. \textbf{4} (2018), 2639--2653.

\bibitem{NM1978} M. Nakao,
\emph{A difference inequality and its application to nonlinear evolution equations.}
J. Math. Soc. Japan \textbf{30} (1978), 747--762.


\bibitem{dalog6} A. Peyravi,
 \emph{General stability and exponential growth for a class of semi-linear wave equations with logarithmic source and memory terms.}
  Appl. Math. Optim. \textbf{81} (2020), 545--561.

\bibitem{beijing2} I. Segal,
\emph{Non-linear semi-groups.}
Ann. of Math. \textbf{78} (1963), 339--364.


\bibitem{XUJFA} R. Xu, J. Su,
\emph{Global existence and finite time blow-up for a class of semilinear pseudo-parabolic equations.}
  J. Funct. Anal. \textbf{264} (2013), 2732--2763.





\end{thebibliography}
\end{document}